
\documentclass[11pt]{article}%
\usepackage{graphicx}
\usepackage{amsmath}
\usepackage{amsfonts}
\usepackage{amssymb}%
\setcounter{MaxMatrixCols}{30}
\providecommand{\U}[1]{\protect\rule{.1in}{.1in}}
\setlength{\oddsidemargin}{-0.05in}
\setlength{\evensidemargin}{-0.05in}
\setlength{\textwidth}{6.5in}
\newtheorem{theorem}{Theorem}[section]

\textheight=8.5in

\newtheorem{conjecture}[theorem]{Conjecture}
\newtheorem{corollary}[theorem]{Corollary}

\newtheorem{lemma}[theorem]{Lemma}

\newtheorem{proposition}[theorem]{Proposition}
\newtheorem{remark}[theorem]{Remark}

\newenvironment{proof}[1][Proof]{\textbf{#1.} }{\hfill\rule{0.5em}{0.5em}}
{\catcode`\@=11\global\let\AddToReset=\@addtoreset
\AddToReset{equation}{section}

\AddToReset{theorem}{section}

\begin{document}

\title{A new dynamical approach of Emden-Fowler equations and systems }
\author{Marie Fran\c{c}oise BIDAUT-VERON\thanks{Laboratoire de Math\'{e}matiques et
Physique Th\'{e}orique, CNRS UMR 6083, Facult\'{e} des Sciences, 37200 Tours
France. E-mail address:veronmf@univ-tours.fr}
\and Hector GIACOMINI\thanks{Laboratoire de Math\'{e}matiques et Physique
Th\'{e}orique, CNRS UMR 6083, Facult\'{e} des Sciences, 37200 Tours France.
E-mail address:Hector.Giacomini@lmpt.univ-tours.fr}}
\date{.}
\maketitle

\begin{abstract}
We give a new approach on general systems of the form
\[
(G)\left\{
\begin{array}
[c]{c}%
-\Delta_{p}u=-\operatorname{div}(\left\vert \nabla u\right\vert ^{p-2}\nabla
u)=\varepsilon_{1}\left\vert x\right\vert ^{a}u^{s}v^{\delta},\\
-\Delta_{q}v=-\operatorname{div}(\left\vert \nabla v\right\vert ^{q-2}\nabla
u)=\varepsilon_{2}\left\vert x\right\vert ^{b}u^{\mu}v^{m},
\end{array}
\right.
\]
where $Q,p,q,\delta,\mu,s,m,$ $a,b$ are real parameters, $Q,p,q\neq1,$ and
$\varepsilon_{1}=\pm1,$ $\varepsilon_{2}=\pm1.$ In the radial case we reduce
the problem to a quadratic system of four coupled first order autonomous
equations, of Kolmogorov type. It allows to obtain new local and global
existence or nonexistence results. We consider in particular the case
$\varepsilon_{1}=\varepsilon_{2}=1.$ We describe the behaviour of the ground
states in two cases where the system is variational. We give a result of
existence of ground states for a nonvariational system with $p=q=2$ and
$s=m>0,$ that improves the former ones. It is obtained by introducing a new
type of energy function. In the nonradial case we solve a conjecture of
nonexistence of ground states for the system with $p=q=2$, $\delta=m+1$ and
$\mu=s+1.\bigskip$

\textbf{Keywords }Elliptic quasilinear systems. Variational or nonvariational
problems. Autonomous and quadratic systems. Stable manifolds. Heteroclinic
orbits.\bigskip

\textbf{A.M.S. Subject Classification }34B15, 34C20, 34C37; 35J20, 35J55,
35J65, 35J70; 37J45.

\end{abstract}

.\pagebreak\medskip

\section{Introduction}

In this paper we consider the nonnegative solutions of Emden-Fowler equations
or systems in $\mathbb{R}^{N}(N\geqq1)$,%
\begin{equation}
-\Delta_{p}u=-\operatorname{div}(\left\vert \nabla u\right\vert ^{p-2}\nabla
u)=\varepsilon_{1}\left\vert x\right\vert ^{a}u^{Q}, \label{one}%
\end{equation}%
\begin{equation}
(G)\left\{
\begin{array}
[c]{c}%
-\Delta_{p}u=-\operatorname{div}(\left\vert \nabla u\right\vert ^{p-2}\nabla
u)=\varepsilon_{1}\left\vert x\right\vert ^{a}u^{s}v^{\delta},\\
-\Delta_{q}v=-\operatorname{div}(\left\vert \nabla v\right\vert ^{q-2}\nabla
u)=\varepsilon_{2}\left\vert x\right\vert ^{b}u^{\mu}v^{m},
\end{array}
\right.  \label{gen}%
\end{equation}
where $Q,p,q,\delta,\mu,s,m,$ $a,b$ are real parameters, $Q,p,q\neq1,$ and
$\varepsilon_{1}=\pm1,$ $\varepsilon_{2}=\pm1.$ These problems are the subject
of a very rich litterature, either in the case of source terms $(\varepsilon
_{1}=\varepsilon_{2}=1)$ or absorption terms $(\varepsilon_{1}=\varepsilon
_{2}=1)$ or mixed terms $(\varepsilon_{1}=-\varepsilon_{2}).$ In the sequel we
are concerned by the radial solutions, except at Section 9 where the solutions
may be nonradial.\medskip

In this article we we give a \underline{\textit{new way of studying the radial
solutions}}. In Section \ref{sys} we reduce system $(G)$ to a quadratic
autonomous system:%
\[
(M)\left\{
\begin{array}
[c]{c}%
X_{t}=X\left[  X-\frac{N-p}{p-1}+\frac{Z}{p-1}\right]  ,\\
Y_{t}=Y\left[  Y-\frac{N-q}{q-1}+\frac{W}{q-1}\right]  ,\\
Z_{t}=Z\left[  N+a-sX-\delta Y-Z\right]  ,\\
W_{t}=W\left[  N+b-\mu X-mY-W\right]  ,
\end{array}
\right.
\]
where $t=\ln r,$ and
\begin{equation}
X(t)=-\frac{ru^{\prime}}{u},\quad Y(t)=-\frac{rv^{\prime}}{v},\quad
Z(t)=-\varepsilon_{1}r^{1+a}u^{s}v^{\delta}\frac{u^{\prime}}{\left\vert
u^{\prime}\right\vert ^{p}},\quad W(t)=-\varepsilon_{2}r^{1+b}u^{\mu}%
v^{m}\frac{v^{\prime}}{\left\vert v^{\prime}\right\vert ^{q}}. \label{sub}%
\end{equation}
This system is of Kolmogorov type. The reduction is valid for equations and
systems with source terms , absorption terms , or mixed terms $.$ It is
remarkable that in the new system, $p$ and $q$ appear only as \underline
{\textit{simple coefficients}}, which allows to treat any value of the
parameters, even $p$ or $q<1,$ and $s,m,\delta$ or $\mu<0$.$\medskip$

In Section \ref{scal} we revisit the well-known scalar case (\ref{one}), where
$(G)$ becomes two-dimensional. We show that the phase plane of the system
gives \underline{\textit{at the same time}} the behaviour of the two equations%
\[
-\Delta_{p}u=\left\vert x\right\vert ^{a}u^{Q}\text{ and }-\Delta
_{p}u=-\left\vert x\right\vert ^{a}u^{Q},
\]
which is a kind of \underline{\textit{unification of the two problems}}, with
source terms or absorption terms. For the case of source term ($\varepsilon
_{1}=1),$ we find again the results of \cite{B}, \cite{GuVe}, showing that the
new dynamical approach is simple and does not need regularity results or
energy functions. Moreover it gives a model for the study of system $(G)$.
Indeed if $p=q$, $a=b$ and $\delta+s=\mu+m,$ system $(G)$ admits solutions of
the form $(u,u),$ where $u$ is a solution of (\ref{one}) with $Q=\delta
+s.\medskip$

In the sequel of the article we study the case of source terms, i.e. $\left(
G\right)  =(S),$ where
\begin{equation}
(S)\left\{
\begin{array}
[c]{c}%
-\Delta_{p}u=\left\vert x\right\vert ^{a}u^{s}v^{\delta},\\
-\Delta_{q}v=\left\vert x\right\vert ^{b}u^{\mu}v^{m}.
\end{array}
\right.  \label{S}%
\end{equation}
This system has been studied by many authors, in particular the Hamiltonian
problem $s=m=0,$ in the linear case $p=q=2,$ see for example \cite{HuVo},
\cite{SZ1}, \cite{SZ2}, \cite{BuMa}, \cite{Sou}, \cite{deFPeRo}, and the
potential system where $\delta=m+1$, $\mu=s+1$ and $a=b,$ see \cite{BR},
\cite{TV}, \cite{TV2}; the problem with general powers has been studied in
\cite{B1}, \cite{Z3}, \cite{Z4}, \cite{Z5} in the linear case and \cite{BPo},
\cite{CFMiT}, \cite{Z2} in the quasilinear case, see also \cite{AzCM},
\cite{ChLuG}, \cite{CGGM}.\medskip\ 

Here we suppose that $\delta,\mu>0$, so that the system is always coupled,
$s,m\geqq0,$ and we assume for simplicity%
\begin{equation}
1<p,q<N,\text{ \qquad}\min(p+a,q+b)>0,\text{\qquad}D=\delta\mu
-(p-1-s)(q-1-m)>0. \label{ht}%
\end{equation}
We say that a positive solution $(u,v)$ in $(0,R)$ is \underline
{\textit{regular}} at $0$ if $u,v$ $\in C^{2}\left(  0,R\right)  \cap
C(\left[  0,R)\right)  $. Condition $\min(p+a,q+b)>0$ guaranties the existence
of local regular solutions. Then $u,v\in C^{1}(\left[  0,R)\right)  $. when
$a,b>-1,$ and $u^{\prime}(0)=v^{\prime}(0)=0$. The assumption $D>0$ is a
classical condition of superlinearity for the system. \medskip

We are interessed in the existence or nonexistence of \underline
{\textit{ground states}}, called G.S$.$, that means global positive $(u,v)$ in
$\left(  0,\infty\right)  $ and regular at $0.$ We exclude the case of
"trivial" solutions, $(u,v)=\left(  0,C\right)  $ or $\left(  C,0\right)  ,$
where $C$ is a constant, which can exist when $s>0$ or $m>0.$ \medskip

In Section \ref{local} we give a series of \underline{\textit{local existence
or nonexistence}} results concerning system $(S)$, which complete the
nonexistence results found in the litterature. They are not based on the fixed
point method, quite hard in general, see for example \cite{GuVe}, \cite{R}. We
make a dynamical analysis of the linearization of system $(M)$ near each fixed
point, which appears to be performant, even for the regular solutions. For a
better exposition, the proofs are given at Section \ref{app}.$\medskip$

In Section \ref{glo} we study the \underline{\textit{global existence}} of
G.S. This problem has been often compared with the nonexistence of positive
solutions of the Dirichlet problem in a ball, see \cite{SZ2}, \cite{SZ3},
\cite{CFMiT}, \cite{CGGM}. Here we use a shooting method adapted to system
$(M)$, which allows to avoid questions of regularity of system $(S).$ We give
a new way of comparison, and improve the former results:

\begin{theorem}
\label{equi}(i) Assume $s<\frac{N(p-1)+p+pa}{N-p}$ and $m<\frac{N(q-1)+q+qb}%
{N-q}.$ If system $(S)$ has no G.S., then\medskip

(i) there exist regular radial solutions such that $X(T)=\frac{N-p}{p-1}$ and
$Y(T)=\frac{N-q}{q-1}$ for some $T>0,$ with $0<X<\frac{N-p}{p-1}$ and
$0<Y<\frac{N-q}{q-1}$ on $(-\infty,T).$\medskip

(ii) there exists a positive radial solution $(u,v)$ of the Dirichlet problem
in a ball $B(0,R)$.\medskip
\end{theorem}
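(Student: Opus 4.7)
The plan is a shooting method in the phase space of system $(M)$, using the fact that regular radial solutions of $(S)$ form, modulo scaling, a one-parameter family of trajectories. By the local analysis of Section~\ref{local}, each regular positive radial solution corresponds to a trajectory of $(M)$ emerging as $t\to-\infty$ from the equilibrium $N_{0}=(0,0,N+a,N+b)$; the scaling invariance of $(S)$ acts on $(M)$ as a time shift, so modulo this symmetry these trajectories form a one-parameter family, which I would parametrize by a scaling-invariant quantity such as $\tau=v(0)/u(0)^{\alpha_{2}/\alpha_{1}}$, where $(\alpha_{1},\alpha_{2})$ denote the standard scaling exponents of $(S)$.

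For each $\tau>0$ set
\[
T_{u}(\tau)=\inf\{t:X(t)=\tfrac{N-p}{p-1}\},\qquad T_{v}(\tau)=\inf\{t:Y(t)=\tfrac{N-q}{q-1}\}\in(-\infty,+\infty],
\]
with the convention that the infimum is $+\infty$ if the level is never reached, and split $(0,+\infty)$ into
\[
B_{1}=\{T_{u}<T_{v}\},\quad B_{2}=\{T_{v}<T_{u}\},\quad B_{3}=\{T_{u}=T_{v}<+\infty\},\quad B_{4}=\{T_{u}=T_{v}=+\infty\}.
\]
Conclusion (i) amounts exactly to $B_{3}\neq\emptyset$.

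The heart of the argument is the claim that the no-G.S.\ hypothesis forces $B_{4}=\emptyset$. For $\tau\in B_{4}$ the forward orbit is trapped in the box $\{0<X<(N-p)/(p-1),\ 0<Y<(N-q)/(q-1)\}$, and I would use the subcriticality hypotheses to locate the relevant equilibria of $(M)$ inside the closure of this box, each such equilibrium corresponding to an admissible asymptotic profile of a global positive solution. Together with the $Z$- and $W$-equations of $(M)$, this bound on $X,Y$ gives a bound on $Z,W$ as well, so the trajectory is global in $t$ and its $\omega$-limit set is contained in this finite set of equilibria; a standard extension argument then promotes the trajectory to a genuine G.S.\ of $(S)$, contradicting the hypothesis.

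Once $B_{4}=\emptyset$, the remaining steps are classical. The sets $B_{1},B_{2}$ are open and disjoint by continuous dependence of trajectories on $\tau$, and both are nonempty: for $\tau\to 0^{+}$ the component $v$ is negligible compared with $u$, the first equation of $(S)$ decouples approximately from $v$, and $X$ reaches $(N-p)/(p-1)$ first, yielding $\tau\in B_{1}$; a symmetric argument gives $\tau\in B_{2}$ for $\tau\to+\infty$. Since $(0,+\infty)=B_{1}\cup B_{2}\cup B_{3}$ is connected and $B_{1},B_{2}$ are disjoint nonempty open sets, $B_{3}\neq\emptyset$, proving (i). For (ii) I would pick $\tau^{\ast}\in B_{3}$ and continue the corresponding trajectory past time $T$: above the critical thresholds, the signs of $X_{t},Y_{t}$ in $(M)$ force $X,Y$ to keep increasing, and a comparison with the scalar analysis of Section~\ref{scal} shows that both $X,Y$ diverge simultaneously at a common finite time $t^{\ast}=\ln R$, i.e.\ $u(R)=v(R)=0$, producing a positive radial solution of the Dirichlet problem in $B(0,R)$.

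The main obstacle is the step $B_{4}=\emptyset$: converting the qualitative bound "$X,Y$ stay below their critical thresholds" into the sharper statement "$(u,v)$ is a genuine G.S.". This is precisely where the subcriticality hypotheses enter crucially, being needed to rule out convergence of the trajectory to degenerate equilibria (on the faces $\{Z=0\}$ or $\{W=0\}$, or unbounded in $Z,W$) that would otherwise correspond to inadmissible limiting behaviour.
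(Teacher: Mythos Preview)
Your shooting framework for part (i) is essentially the paper's, but you have inverted the role of the subcriticality hypotheses. The implication ``$\tau\in B_4\Rightarrow(u,v)$ is a G.S.'' is immediate and needs no hypothesis: once $X,Y$ stay in the open rectangle, the Kolmogorov structure keeps $Z,W>0$, and the argument of Proposition~\ref{fac} bounds $Z,W$ from above, so the orbit is global in $\mathcal{R}$ and $(u,v)$ is by definition a ground state. No $\omega$-limit analysis or equilibrium classification is required, and trajectories cannot reach the faces $\{Z=0\}$ or $\{W=0\}$ since these are invariant. Where the conditions $s<\frac{N(p-1)+p+pa}{N-p}$ and $m<\frac{N(q-1)+q+qb}{N-q}$ actually enter is in showing $B_1,B_2\neq\emptyset$. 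The paper's mechanism is sharper than your heuristic: as the parameter tends to the boundary $\{Y=0\}$ of the unstable manifold at $N_0$, the limiting (nonadmissible) trajectory lies in the invariant hyperplane $\{Y=0\}$, where the $(X,Z)$-subsystem is exactly the scalar system for $-\Delta_p U=r^aU^s$; subcriticality of $s$ forces the regular scalar solution to change sign, i.e.\ $X\to\infty$ in finite time, and continuous dependence transfers this to nearby admissible orbits. Your phrasing ``the first equation of $(S)$ decouples from $v$'' is misleading, since small $v$ makes $r^au^sv^\delta$ small and would give $-\Delta_p u\approx 0$, not the scalar Emden equation; the decoupling happens in $(M)$, not in $(S)$.

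The serious gap is in part (ii). From $X(T)=\frac{N-p}{p-1}$ and $Y(T)=\frac{N-q}{q-1}$ you assert that $X$ and $Y$ blow up at a \emph{common} finite time, giving $u(R)=v(R)=0$. It is true that past $T$ each of $X,Y$ is increasing and blows up in finite time (from $X_t\geq X(X-\frac{N-p}{p-1})$, etc.), but nothing forces the two blow-up times to coincide: the $X$- and $Y$-equations are coupled only through $Z,W$, and no symmetry is available. The paper flags exactly this difficulty (``the zeros of $u,v$ correspond to infinite limits for $X,Y$, and then the argument of continuous dependence is no more available'') and does \emph{not} deduce (ii) from (i). Instead it runs a second shooting argument with a new partition $\mathcal{M}_1\cup\mathcal{M}_2\cup\mathcal{M}_3\cup\mathcal{S}$ according to whether $X$ alone, $Y$ alone, or both have an infinite branch. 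The delicate point is that $\mathcal{M}_1$ is open: near the blow-up of $X$ one has lost uniform control, so the paper tracks the ratio $\varphi=X/Y$, shows it is eventually increasing, and uses the H\^opf lemma (which gives $\varphi\to 1$ at a simultaneous zero) to exclude $Y$ blowing up as well. Your argument for (ii) would need this device or an equivalent.
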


This result is a key tool in the next Sections for proving the existence of a
G.S. It gives also new existence results for the Dirichlet problem, see
Corollary \ref{dir}. We also give a complementary result:

\begin{proposition}
\label{com} Assume $s\geqq\frac{N(p-1)+p+pa}{N-p}$ and $m\geqq\frac
{N(q-1)+q+qb}{N-q}.$ Then all the regular radial solutions are $G.S.$\medskip
\end{proposition}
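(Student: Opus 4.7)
The plan is proof by contradiction. Assume $(u,v)$ is a regular radial solution of $(S)$ that is not a G.S. Since both right-hand sides of $(S)$ are nonnegative, the map $r\mapsto r^{N-1}|u'(r)|^{p-2}u'(r)$ is nonincreasing and vanishes at $r=0$ by regularity; hence $u'\leq 0$ on the existence interval, so $u$ is nonincreasing and bounded by $u(0)$, and analogously for $v$. Finite-$r$ blow-up is therefore impossible, and the only way $(u,v)$ can fail to be a G.S.\ is that $u$ or $v$ first vanishes at some finite $R>0$; in the phase variables of $(M)$ with $t=\ln r$ this means $X(t)\to+\infty$ or $Y(t)\to+\infty$ as $t\nearrow \ln R$.

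Suppose, without loss of generality, that $u(R)=0$ first. I multiply the first equation of $(S)$ by $ru'+\alpha u$ with the Pokhozhaev choice $\alpha=(N-p)/p$ and integrate over $[0,R]$. Using $u'(0)=0$ and $u(R)=0$, the $\int_{0}^{R} r^{N-1}|u'|^{p}\,dr$ contributions from the left-hand side cancel exactly, yielding the identity
\[
\frac{1-p}{p}\,R^{N}|u'(R)|^{p}
\;=\;\Bigl[\alpha-\frac{N+a}{s+1}\Bigr]\int_{0}^{R} r^{N+a-1}\,u^{s+1} v^{\delta}\,dr
\;-\;\frac{\delta}{s+1}\int_{0}^{R} r^{N+a}\,u^{s+1} v^{\delta-1}\,v'\,dr.
\]
A direct computation shows the bracketed coefficient equals $\frac{(N-p)(s+1)-p(N+a)}{p(s+1)}$, which is nonnegative precisely when $s\geq s^{*}:=\frac{N(p-1)+p+pa}{N-p}$; the second integral is nonnegative thanks to $v'\leq 0$ from the first step. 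Hence the right-hand side is $\geq 0$ while the left-hand side is $\leq 0$ (as $p>1$), so both must vanish, forcing $u'(R)=0$ together with $u(R)=0$, which contradicts nontriviality of $(u,v)$ by uniqueness for the radial quasilinear ODE. If instead $v$ vanishes first, the symmetric identity with multiplier $\beta=(N-q)/q$ uses the hypothesis $m\geq m^{*}:=\frac{N(q-1)+q+qb}{N-q}$ in the same way.

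The main obstacle is the degenerate equality case, i.e.\ $s=s^{*}$ together with $u'(R)=0$, where the Pokhozhaev identity reduces to $0=0$ and does not by itself deliver a contradiction. One must refine using the system coupling: the vanishing of the right-hand side at equality forces $v'\equiv 0$ on $[0,R]$, so $v\equiv v(0)$ is constant there, and then the second equation of $(S)$ reads $0=-\Delta_{q}v=|x|^{b}u^{\mu}v(0)^{m}$, which is impossible since $u>0$ on $(0,R)$. Equivalently, this whole analysis admits a phase-space reformulation inside $(M)$: the integral identity corresponds to a Lyapunov-type monotone functional of $(X,Y,Z,W)$ whose monotonicity, under $s\geq s^{*}$ and $m\geq m^{*}$, prevents the two-dimensional unstable manifold of $N_{0}=(0,0,N+a,N+b)$ from escaping through $X=+\infty$ or $Y=+\infty$ in finite $t$, so the trajectory stays in the positive region for all $t\in\mathbb{R}$, i.e.\ $(u,v)$ is a G.S.
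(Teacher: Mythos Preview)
Your proof is correct and uses essentially the same Pohozaev-type identity as the paper, which packages it as the monotone function $\mathcal{F}(r)=r^{N}\bigl[\tfrac{|u'|^{p}}{p'}+\tfrac{r^{a}u^{s+1}v^{\delta}}{s+1}+\tfrac{N-p}{p}\tfrac{u|u'|^{p-2}u'}{r}\bigr]$ (your integrated identity is exactly $\mathcal{F}(R)-\mathcal{F}(0)=\int_{0}^{R}\mathcal{F}'\,dr$) and reads off $X<\tfrac{N-p}{p-1}$ directly in the phase variables, so the regular trajectory never leaves the box and is a G.S. Your separate ``degenerate equality case'' and the appeal to ODE uniqueness at $u(R)=u'(R)=0$ are unnecessary: since $v'<0$ strictly on $(0,R)$, the term $-\tfrac{\delta}{s+1}\int_{0}^{R} r^{N+a}u^{s+1}v^{\delta-1}v'\,dr$ on the right-hand side is strictly positive for \emph{every} $s\ge s^{*}$, which already contradicts the vanishing of the right-hand side.
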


In Section \ref{HS} we study the \underline{\textit{radial}} solutions of the
well known Hamiltonian system
\[
(SH)\left\{
\begin{array}
[c]{c}%
-\Delta u=\left\vert x\right\vert ^{a}v^{\delta},\\
-\Delta v=\left\vert x\right\vert ^{b}u^{\mu},
\end{array}
\right.
\]
corresponding to $p=q=2<N,$ $s=m=0,$ $a>-2,$ which is variational. In the case
$a=b=0,$ a main conjecture was made in \cite{SZ4}:

\begin{conjecture}
\label{(C)} System $(SH)$ with $a=b=0$ admits no (radial or nonradial) G.S. if
and only if $(\delta,\mu)$ is under the hyperbola of equation
\[
\frac{N}{\delta+1}+\frac{N}{\mu+1}=N-2.
\]

\end{conjecture}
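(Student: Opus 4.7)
The plan is to split the conjecture into its radial and nonradial components and attack the first by phase-plane analysis of $(M)$, since that reduction does not see nonradial solutions. Specializing $(M)$ to $p=q=2$, $s=m=0$, $a=b=0$, I would first enumerate the fixed points in the positive orthant: the origin, the axial equilibria, the point $N_{0}=(0,0,N,N)$ through which every regular radial solution enters as $t\to-\infty$ (by Section \ref{local}), and the interior scaling equilibrium
\[
M_{0}=(\alpha,\beta,\delta\beta-2,\mu\alpha-2),\qquad \alpha=\frac{2(\delta+1)}{\delta\mu-1},\quad \beta=\frac{2(\mu+1)}{\delta\mu-1},
\]
which corresponds to the self-similar solution $u=c_{1}r^{-\alpha}$, $v=c_{2}r^{-\beta}$. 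A direct computation shows that the critical hyperbola $\frac{N}{\delta+1}+\frac{N}{\mu+1}=N-2$ is equivalent to $\alpha+\beta=N-2$, so it separates the parameter region in which $M_{0}$ sits strictly inside the admissible box $\{0<X,Y<N-2\}$ from the region in which it does not.

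For the supercritical regime (on or above the hyperbola), I would run a shooting argument parallel to the scalar case of Section \ref{scal}. A linearization of $(M)$ at $M_{0}$ should exhibit a stable manifold whose codimension matches the one-parameter family of regular orbits leaving $N_{0}$, so that some distinguished parameter sends the regular orbit into $M_{0}$. By Theorem \ref{equi}, if no G.S.\ existed then every regular orbit would hit $X=\tfrac{N-p}{p-1}$ or $Y=\tfrac{N-q}{q-1}$ in finite $t$; cutting the shooting parameter by the exit face hit (the "fast-decay" faces $\{Z=0\}$ or $\{W=0\}$, versus the "Dirichlet" faces $\{X=N-2\}$, $\{Y=N-2\}$) and using connectedness of the regular family, one obtains a parameter producing the slow-decay G.S.\ that winds into $M_{0}$. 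For the subcritical regime I would verify that the position/type of $M_{0}$ obstructs such an attraction of the unique regular orbit, and that the monotonicity of the vector field on the faces of the admissible box forces every regular orbit to exit in finite $t$, ruling out radial G.S.\ and matching the complementary picture of Proposition \ref{com}.

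The main obstacle is the nonradial half of Conjecture \ref{(C)}, which lies entirely outside the dynamical framework of $(M)$. The orthodox strategy is to first reduce every G.S.\ of $(SH)$ to a radial one by a moving-plane argument on the cooperative system, after which the radial analysis above closes the loop; the alternative is a direct Pohozaev/Rellich identity on $\mathbb{R}^{N}$. Both demand sharp a priori decay estimates for $u,v$ and their gradients at infinity, which are delicate because $(SH)$ with $s=m=0$ is Hamiltonian rather than variational in $H^{1}$, and Kelvin-type inversions must be handled with different exponents in the two equations. This is precisely the point at which the Serrin--Zou conjecture is known to remain open in wide parameter ranges, so my honest expectation is that the dynamical approach of this paper delivers the full radial half of Conjecture \ref{(C)} and reduces the nonradial half to the unresolved symmetry/decay question, rather than settling the conjecture outright.
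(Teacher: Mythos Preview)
First, note that Conjecture~\ref{(C)} is \emph{not proved} in the paper: it is stated as open, with only the radial half settled (Theorem~\ref{cla}).  Your final paragraph acknowledges this correctly, so the honest conclusion you reach is the right one.

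For the radial half, however, your route diverges from the paper's and has genuine gaps.  The paper does not argue via the stable manifold of $M_{0}$ or via exit-face dichotomies; the decisive tool is the Pohozaev energy $\mathcal{E}_{H}$ of (\ref{Pom}), whose derivative has the sign of $\frac{N}{\delta+1}+\frac{N}{\mu+1}-(N-2)$.  In the super/critical case one combines Theorem~\ref{equi}(i) with $\mathcal{E}_{H}$: if no G.S.\ existed, some regular orbit would reach $X(T)=Y(T)=N-2$ \emph{simultaneously}, but $\mathcal{E}_{H}$ is nonincreasing from $0$ and a direct evaluation shows $\mathcal{E}_{H}(e^{T})>0$, a contradiction.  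In the subcritical case $\mathcal{E}_{H}$ is nondecreasing from $0$ yet $\mathcal{E}_{H}(r)=O(r^{N-2-\gamma-\xi})\to 0$ since $\gamma+\xi>N-2$, again a contradiction.

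Your purely dynamical outline misses this energy mechanism, and the substitutes you propose do not close.  Theorem~\ref{equi} does not say that \emph{every} regular orbit hits $X=N-2$ or $Y=N-2$; it produces \emph{one} orbit hitting both at the same time, so your contrapositive is misquoted.  The sets $\{Z=0\}$ and $\{W=0\}$ are invariant hyperplanes of the Kolmogorov system, not exit faces, so the ``fast-decay versus Dirichlet face'' dichotomy is ill-posed.  For the subcritical direction, ``position/type of $M_{0}$ obstructs attraction'' is not enough: Proposition~\ref{biz} shows that trajectories converging to $M_{0}$ at $+\infty$ \emph{always} exist when (\ref{cd}) holds, so the obstruction must come from an integral quantity like $\mathcal{E}_{H}$, not from local structure at $M_{0}$.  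Finally, your coordinates for $M_{0}$ are off: with $p=q=2$, $s=m=a=b=0$ one has $Z_{0}=N-2-\alpha$ and $W_{0}=N-2-\beta$, whereas $\delta\beta-2=\alpha$ and $\mu\alpha-2=\beta$.
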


\noindent The question is still open; it was solved in the radial case in
\cite{Mi}, \cite{SZ2}, then partially in \cite{SZ1}, \cite{BuMa}, and up to
the dimension $N=4$ in \cite{Sou}, see references therein. Here we find again
and extend to the case $a,b\neq0$ some results of \cite{HuVo} relative to the
G.S., with a shorter proof. We also give an existence result for the Dirichlet
problem improving a result of \cite{deFPeRo}.

\begin{theorem}
\label{cla}Let $\mathcal{H}_{0}$ be the critical hyperbola in the plane
$(\delta,\mu)$ defined by
\begin{equation}
\frac{N+a}{\delta+1}+\frac{N+b}{\mu+1}=N-2. \label{ero}%
\end{equation}
Then

(i) System $(SH)$ admits a (unique) radial G.S. if and only if ($\delta,\mu)$
is above $\mathcal{H}_{0}$ or on $\mathcal{H}_{0}$. \medskip

(ii) The radial Dirichlet problem in a ball has a solution if and only if
($\delta,\mu)$ is under $\mathcal{H}_{0}$.\medskip\ 

(iii) On $\mathcal{H}_{0}$ the G.S. has the following behaviour at $\infty:$
assuming for example $\delta>\frac{N+a}{N-2},$ then $\lim_{r\rightarrow\infty
}r^{N-2}u(r)=\alpha>0,$ and%
\begin{align*}
\lim_{r\rightarrow\infty}r^{(N-2)\mu-(2+b)}v  &  =\beta>0\qquad\text{if }%
\mu<\frac{N+b}{N-2},\\
\lim_{r\rightarrow\infty}r^{N-2}v  &  =\beta>0\qquad\text{if }\mu>\frac
{N+b}{N-2},\\
\lim_{r\rightarrow\infty}r^{N-2}\left\vert \ln r\right\vert ^{-1}v  &
=\beta>0\qquad\text{if }\mu=\frac{N+b}{N-2}.
\end{align*}

\end{theorem}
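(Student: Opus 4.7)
I specialize the reduction of Section~\ref{sys} to $p = q = 2$, $s = m = 0$, so that $(M)$ becomes a 4-dimensional quadratic Kolmogorov system. The relevant fixed points in the closed positive octant are $M_r = (0, 0, N+a, N+b)$ (regular behavior at $r = 0$), $M_\infty = (N-2, N-2, 0, 0)$ (Newtonian decay at $\infty$), the scale--invariant equilibrium $M_0 = (X_0, Y_0, Z_0, W_0)$ determined by $X_0 + Z_0 = Y_0 + W_0 = N-2$, $\delta Y_0 = X_0 + a + 2$, $\mu X_0 = Y_0 + b + 2$, and two mixed equilibria of the form $(N-2, Y_*, 0, W_*)$ with $Y_* = \mu(N-2) - (b+2)$ together with its symmetric partner. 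A short algebraic check shows that $X_0 + Y_0 \leq N-2$---equivalently, $M_0$ lies in the closed positive octant---precisely when $(\delta, \mu)$ is on or above $\mathcal{H}_0$, with equality exactly on $\mathcal{H}_0$.

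Next I linearize $(M)$ at $M_r$: after reordering the Jacobian is block triangular with eigenvalues $a+2$, $b+2$, $-(N+a)$, $-(N+b)$, so its unstable manifold is 2--dimensional and parameterizes the regular solutions by $(u(0), v(0))$. The scale invariance $u \mapsto \lambda^{X_0} u(\lambda\,\cdot)$, $v \mapsto \lambda^{Y_0} v(\lambda\,\cdot)$ of $(SH)$ translates into $t$--translation along trajectories, so up to scaling the regular solutions form a one--parameter family and a shooting method on this parameter will yield (i) and (ii).

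For the nonexistence side I would use the Pohozaev identity: multiplying $-\Delta u = |x|^a v^{\delta}$ by $x \cdot \nabla v + \alpha v$ and $-\Delta v = |x|^b u^{\mu}$ by $x \cdot \nabla u + \beta u$ with $\alpha + \beta = N - 2$, and adding, produces a functional $F(r)$ whose derivative is a multiple of
\[
\Bigl( (N-2) - \tfrac{N+a}{\delta+1} - \tfrac{N+b}{\mu+1} \Bigr) \bigl( r^{N-1+a} v^{\delta+1} + r^{N-1+b} u^{\mu+1} \bigr),
\]
whose sign is controlled exactly by $\mathcal{H}_0$. For a G.S.\ the boundary terms $F(0)$ and $F(\infty)$ vanish, ruling out G.S.\ strictly below $\mathcal{H}_0$; for a positive Dirichlet solution on a ball $B(0, R)$ the boundary term $F(R) = R^N u'(R) v'(R)$ is strictly positive, incompatible with $F$ nonincreasing, ruling out Dirichlet solutions strictly above $\mathcal{H}_0$, and on $\mathcal{H}_0$ by a limiting argument. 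Combined with Theorem~\ref{equi}---which asserts that the absence of a G.S.\ forces a positive Dirichlet solution---this yields the dichotomy in (i) and (ii).

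For (iii), on $\mathcal{H}_0$ the identity $X_0 + Y_0 = N - 2$ places $M_0$ on the same critical hypersurface as $M_\infty$ and the two mixed equilibria, and the G.S.\ trajectory approaches at $t = +\infty$ whichever of these is the relevant sink. In the regime $\delta > (N+a)/(N-2)$ and $\mu < (N+b)/(N-2)$ this sink is $(N-2, Y_*, 0, W_*)$, and reading off the target's coordinates gives immediately $u \sim r^{-(N-2)}$ and $v \sim r^{-Y_*}$; the symmetric case is analogous, and the logarithmic factor in the borderline case $\mu = (N+b)/(N-2)$ arises from a non--semisimple zero eigenvalue (resonance) of the linearization. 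The hard step will be the shooting itself: proving that on or above $\mathcal{H}_0$ at least one trajectory of the 2D unstable manifold of $M_r$ stays in the positive octant for all $t$ and does not enter the ``Dirichlet region'' $\{X = N-2 = Y\}$ at finite time. This is closed by combining the Pohozaev sign analysis with Theorem~\ref{equi}: any failure would produce a Dirichlet solution, which the Pohozaev identity forbids above $\mathcal{H}_0$.
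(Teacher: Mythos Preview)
Your plan for (i) and (ii) is essentially the paper's: the Pohozaev-type functional $\mathcal{E}_H$ (your $F$) combined with Theorem~\ref{equi} gives the dichotomy. One minor correction: with the mixed multipliers you describe, the derivative comes out as $r^{N-1}u'v'$ times the sign factor, not the sum $r^{N-1+a}v^{\delta+1}+r^{N-1+b}u^{\mu+1}$ you wrote; either positive factor works for the argument, but the form matters when you check $\mathcal{E}_H(\infty)=0$ in the subcritical case, which the paper gets from the a~priori bound $\mathcal{E}_H(r)=O(r^{N-2-\gamma-\xi})$ of Proposition~\ref{fac} together with $\gamma+\xi>N-2$.

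The genuine gap is in (iii). You assert that ``the G.S.\ trajectory approaches at $t=+\infty$ whichever of these is the relevant sink,'' but none of $A_0$, $P_0$, $Q_0$ is a sink---$A_0$ has two positive eigenvalues $\lambda_1=\lambda_2=N-2$, and $P_0$ is a saddle---and in a four-dimensional autonomous system there is no Poincar\'e--Bendixson principle to force convergence to \emph{any} equilibrium. This is the main work in the paper's proof. It exploits the critical case $\mathcal{E}_H\equiv 0$ to eliminate $W$ via the algebraic relation~(\ref{var}) and obtain a three-dimensional system in $(X,Y,Z)$; it then argues, by comparing the signs of $X_{tt}$, $Z_t$, $Y_t$ at successive extremal times, that $X$ cannot have a maximum followed by a minimum, hence $X$ (and by symmetry $Y$) is eventually monotone and bounded in $(0,N-2]$, after which $Z$, $W$ are seen to be eventually monotone as well. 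Only then can one invoke Propositions~\ref{cao} and~\ref{cpo} to identify the limit as $A_0$ or $P_0$ according to the sign of $\mu-\frac{N+b}{N-2}$. Without such a monotonicity argument your identification of the limiting point is unsupported. A smaller point: in the borderline case $\mu=\frac{N+b}{N-2}$ the relevant eigenvalue $\lambda_4$ is a \emph{simple} zero (giving a one-dimensional center manifold and the $|\ln r|$ factor), not a non-semisimple one.
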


Our proofs use a Pohozaev type function; in terms of the new variables
$X,Y,Z,W$, it contains a quadratic factor%
\begin{align}
\mathcal{E}_{H}(r)  &  =r^{N}\left[  u^{\prime}v^{\prime}+r^{b}\frac
{\left\vert u\right\vert ^{\mu+1}}{\mu+1}+r^{a}\frac{\left\vert v\right\vert
^{\delta+1}}{\delta+1}+\frac{N+a}{\delta+1}\frac{vu^{\prime}}{r}+\frac
{N+b}{\mu+1}\frac{uv^{\prime}}{r}\right] \nonumber\\
&  =r^{N-2}uv\left[  XY-\frac{Y(N+b-W)}{\mu+1}-\frac{(N+a-Z)X}{\delta
+1}\right]  . \label{Pom}%
\end{align}
As observed in (\cite{HuVo}) the G.S. can present a non-symmetric behaviour.
This non-symmetry phenomena has to be taken in account for solving conjecture
(\ref{(C)}).\medskip

In Section \ref{NV} we consider the radial solutions of a \underline
{\textit{nonvariational system}}:%

\[
(SN)\left\{
\begin{array}
[c]{c}%
-\Delta u=\left\vert x\right\vert ^{a}u^{s}v^{\delta},\\
-\Delta v=\left\vert x\right\vert ^{a}u^{\mu}v^{s},
\end{array}
\right.
\]
where $p=q=2<N,$ $a=b>-2$ and $m=s>0.$ For small $s$ it appears as a
perturbation of system $(SH).$ In the litterature very few results are known
for such nonvariational systems. Our main result in this Section is a new
result of \underline{\textit{existence of G.S}}. valid for any $s$:

\begin{theorem}
\label{cru}Consider the system $(SN),$ with $N>2,$ $a>-2.$ We define a curve
$\mathcal{C}_{s}$ in the plane $(\delta,\mu)$ by
\begin{equation}
\text{ }\frac{N+a}{\mu+1}+\frac{N+a}{\delta+1}=N-2+\frac{(N-2)s}{2}\min
(\frac{1}{\mu+1},\frac{1}{\delta+1}), \label{nc}%
\end{equation}
located under the hyperbola defined by (\ref{ero})$.$ If $(\delta,\mu)$ is
above $\mathcal{C}_{s},$ system $(SN)$ admits a G.S.\medskip
\end{theorem}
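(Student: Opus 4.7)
I argue by contradiction, relying on Theorem \ref{equi}. Assume $(SN)$ has no G.S. Since the position of $(\delta,\mu)$ above $\mathcal{C}_s$ implies that $(SN)$ is subcritical in the sense of Theorem \ref{equi} (the bound $s=m<(N+2+2a)/(N-2)$ follows because $\mathcal{C}_s$ lies under $\mathcal{H}_0$), that theorem furnishes both a regular radial solution whose trajectory in $(M)$ reaches $X(T)=Y(T)=N-2$ at some $T>0$, and a positive radial Dirichlet solution on a ball $B(0,R)$. The plan is to construct a Pohozaev-type energy $\mathcal{E}_s$ that reduces to $\mathcal{E}_H$ from \eqref{Pom} when $s=0$ and whose behavior contradicts the existence of these objects whenever $(\delta,\mu)$ sits above $\mathcal{C}_s$.

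Exploiting the symmetry $(u,\delta)\leftrightarrow(v,\mu)$ of $(SN)$, I may assume without loss of generality that $\delta\leq\mu$, so that the $\min$ in \eqref{nc} equals $1/(\mu+1)$. Motivated by the shape of \eqref{Pom}, I would seek an energy of the form
\[
\mathcal{E}_s(r)=r^{N-2+\sigma}\,u^{1+\tau}v\,\Bigl[\,XY-\tfrac{Y(N+a-W)}{\mu+1}-\tfrac{(N+a-Z)X}{\delta+1}+s\,\Psi(X,Y,Z,W)\,\Bigr],
\]
where the exponents $\sigma,\tau$ are proportional to $s$ and the polynomial correction $\Psi$ is to be determined. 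Differentiating along the field $(M)$ and substituting $X_t,Y_t,Z_t,W_t$, the terms carrying $s$ group into a single obstruction that can be cancelled by a unique choice of $\sigma,\tau,\Psi$. The arithmetic of this cancellation is precisely the relation \eqref{nc} defining $\mathcal{C}_s$: a weight $\tau\propto s/(\mu+1)$ placed on $u$ (rather than on $v$) is what produces the asymmetric $\min$ in the statement and accounts for why the curve depends on $\min(1/(\mu+1),1/(\delta+1))$ and not on a symmetric combination.

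With the right $\mathcal{E}_s$, the contradiction is reached in the classical Pohozaev style. On the Dirichlet solution from Theorem \ref{equi}(ii), $u$ and $v$ vanish on $\partial B(0,R)$ while the regularity at $0$, together with the hypothesis $a>-2$, kills the boundary term there; integrating $d\mathcal{E}_s/dr$ over $(0,R)$ then forces a sign of the bulk integrand incompatible with $(\delta,\mu)$ lying strictly above $\mathcal{C}_s$. Equivalently, working on the regular trajectory of Theorem \ref{equi}(i), one has $\mathcal{E}_s\to 0$ as $t\to -\infty$ by the regular behavior at the origin, whereas substituting $X(T)=Y(T)=N-2$ into the bracket at $t=T$ yields a value of the opposite sign, again contradicting the monotonicity of $\mathcal{E}_s$.

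The main obstacle is the construction of $\mathcal{E}_s$. Because $(SN)$ is genuinely nonvariational, there is no Lagrangian to guide the ansatz; the exponents $\sigma,\tau$ and the correction $\Psi$ must be pinned down by a matching procedure that simultaneously (a) recovers $\mathcal{E}_H$ in the limit $s\to 0$, (b) renders $d\mathcal{E}_s/dt$ a single-signed quadratic form on the strip $\{0<X,Y<N-2\}$, and (c) produces the exact exponent $(N-2)s/(2(\mu+1))$ appearing in \eqref{nc}. A secondary technical point is the asymptotic analysis at $r=0$ when $-2<a<0$, for which one needs the sharp local expansion of regular solutions coming from the linearization at the fixed point $(0,0,N+a,N+a)$ of $(M)$ treated in Section \ref{local}.
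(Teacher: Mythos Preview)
Your high-level strategy is right: argue by contradiction, invoke Theorem~\ref{equi}(i) to get a regular trajectory reaching $X(T)=Y(T)=N-2$, and rule this out with a Pohozaev-type energy that starts at $0$ and is monotone. But your ansatz for the energy is off target, and the mechanism you propose for the $\min$ in~\eqref{nc} is not the one that actually produces it.

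The paper does \emph{not} modify the prefactor: the energy is still $r^{N-2}uv$ times a bracket (equivalently $r^N[\,u'v'+\cdots\,]$). What changes is that two genuinely new terms are appended inside the bracket, namely $\tfrac{s}{2(\delta+1)}X^2+\tfrac{s}{2(\mu+1)}Y^2$, i.e.\ $\tfrac{s}{2(\delta+1)}\,vu'^2/u+\tfrac{s}{2(\mu+1)}\,uv'^2/v$ in the original variables; see~\eqref{Pon}. The point of these quadratic terms is not to ``cancel an obstruction'' but to \emph{eliminate $Z$ and $W$ from $\Phi'$}: with the choice $\alpha=j=1/(\mu+1)$, $\beta=k=1/(\delta+1)$, $\theta=N\alpha$, $\sigma=N\beta$, the derivative becomes $-\tfrac{s}{2}r^{N-3}uv\,\mathcal{B}(X,Y)$ for an explicit cubic $\mathcal{B}$ in $(X,Y)$ alone. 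Your proposed modification of the exponents on $r$ and $u$ would instead reinject $X$ and $Y$ into every term of the derivative and would not decouple $Z,W$; there is no reason to expect a single-signed expression on $\{0<X,Y<N-2\}$ to emerge from that route. Also, $\Phi'$ is cubic in $(X,Y)$, not a quadratic form, so requirement~(b) as you phrase it cannot hold.

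Finally, the asymmetric $\min$ in~\eqref{nc} does not come from an asymmetric weight on $u$; the energy~\eqref{Pon} is symmetric under $(u,\delta)\leftrightarrow(v,\mu)$. The $\min$ arises when you check the sign of $\mathcal{B}$ on the boundary of the square $[0,N-2]^2$: positivity on the edge $X=N-2$ forces $(\alpha+\beta)\tfrac{N}{N-2}-1\le \tfrac{s}{2}\beta$, and on $Y=N-2$ the same with $\alpha$; the binding inequality is the one with $\min(\alpha,\beta)$, which is exactly~\eqref{nc}. A minor point: the reduction to $s<(N+2+2a)/(N-2)$ is not a consequence of $\mathcal{C}_s$ lying under $\mathcal{H}_0$; it is disposed of separately via Proposition~\ref{com}.
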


\noindent This result is obtained by constructing a new type of energy
function which contains two terms in $X^{2},Y^{2}$ :%
\begin{align}
\Phi(r)  &  =r^{N}\left[  u^{\prime}v^{\prime}+r^{b}\frac{u^{\mu+1}v^{s}}%
{\mu+1}+r^{a}\frac{u^{s}v^{\delta+1}}{\delta+1}+\frac{N+a}{\delta+1}%
\frac{vu^{\prime}}{r}+\frac{N+b}{\mu+1}\frac{uv^{\prime}}{r}+\frac{s}%
{2(\delta+1)}\frac{vu^{\prime2}}{u}+\frac{s}{2(\mu+1)}\frac{uv^{\prime2}}%
{v}\right] \nonumber\\
&  =r^{N-2}uv\left[  XY-\frac{Y(N+b-W)}{\mu+1}-\frac{(N+a-Z)X}{\delta+1}%
+\frac{s}{2(\delta+1)}X^{2}+\frac{s}{2(\mu+1)}Y^{2}\right]  . \label{Pon}%
\end{align}
\bigskip

In Section \ref{RP} we consider the \underline{\textit{radial}} solutions of
the potential system%
\[
(SP)\left\{
\begin{array}
[c]{c}%
-\Delta_{p}u=\left\vert x\right\vert ^{a}u^{s}v^{m+1},\\
-\Delta_{q}v=\left\vert x\right\vert ^{a}u^{s+1}v^{m},
\end{array}
\right.
\]
where $\delta=m+1,\mu=s+1$ and $a=b,$ which is variational, see \cite{TV},
\cite{TV2}. Using system ($M)$ we deduce new results of existence:

\begin{theorem}
\label{clo}Let $\mathcal{D}$ be the critical line in the plane $(m,s)$ defined
by
\[
N+a=(m+1)\frac{N-q}{q}+\left(  s+1\right)  \frac{N-p}{p}.
\]
Then

(i) System $(SP)$ admits a radial G.S. if and only if ($m,s)$ is above or on
$\mathcal{D}.\medskip$

(ii) On $\mathcal{D}$ the G.S. has the following behaviour: suppose for
example $q\leqq p.$ Let $\lambda^{\ast}=N+a-(s+1)\frac{N-p}{p-1}-m\frac
{N-q}{q-1}.$ Then $\lim_{r\rightarrow\infty}r^{\frac{N-p}{p-1}}u(r)=\alpha>0,$
and
\begin{align}
\lim_{r\rightarrow\infty}r^{\frac{N-q}{q-1}}v(r)  &  =\beta>0\qquad\text{if
}\lambda^{\ast}<0,\label{iu}\\
\lim_{r\rightarrow\infty}r^{\frac{\frac{N-p}{p-1}\mu-(q+b)}{q-1-m}}v(r)  &
=\beta>0\qquad\text{if }\lambda^{\ast}>0,\label{id}\\
\lim_{r\rightarrow\infty}r^{\frac{N-q}{q-1}}\left\vert \ln r\right\vert
^{-\frac{1}{q-1-m}}v(r)  &  =\beta>0\qquad\text{if }\lambda^{\ast}=0.
\label{it}%
\end{align}
In particular (\ref{iu}) holds if $p=q,$ or $q\leqq m+1.$\medskip\ 

(iii) The radial Dirichlet problem in a ball has a solution if and only if
($m,s)$ is under $\mathcal{D}$.\medskip
\end{theorem}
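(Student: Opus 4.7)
The plan is to adapt the strategy of Theorem \ref{cla} to the variational setting of $(SP)$. I reduce the radial solutions to the Kolmogorov system $(M)$ via \eqref{sub} with $\delta = m+1$, $\mu = s+1$, $a = b$, $\varepsilon_{1} = \varepsilon_{2} = 1$, build a Pohozaev-type energy whose monotonicity detects the critical line $\mathcal{D}$, and combine it with Theorem \ref{equi} to get the existence/nonexistence dichotomy of parts (i) and (iii). Part (ii) would follow from a linearization of $(M)$ at the degenerate equilibrium that appears on $\mathcal{D}$.

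First I would catalogue the fixed points of $(M)$ in the closed positive octant: the origin $N_{0}$, which is the limit of regular solutions at $r=0^{+}$; the fundamental-decay point $A_{0}=(\tfrac{N-p}{p-1},\tfrac{N-q}{q-1},0,0)$; and a scale-invariant equilibrium $M^{*}$ associated with the explicit singular power solution forced by the scaling of $(SP)$. A direct algebraic check shows that $M^{*}$ lies in the open positive octant with $X^{*}<\tfrac{N-p}{p-1}$ and $Y^{*}<\tfrac{N-q}{q-1}$ exactly when $(m,s)$ lies above $\mathcal{D}$, and that $M^{*}$ coalesces with $A_{0}$ precisely on $\mathcal{D}$.

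Next, exploiting the potential structure $\delta=m+1$, $\mu=s+1$, I construct a Pohozaev-type function of the form $\mathcal{E}_{P}(r)=r^{N}uv\,Q(X,Y,Z,W)$ with $Q$ a factored quadratic polynomial in the spirit of \eqref{Pom} and \eqref{Pon}, tuned so that along trajectories of $(M)$ its derivative factors as
\[
\mathcal{E}_{P}'(r) = K\, r^{N-1+a}\, u^{s+1}v^{m+1}\, L(m,s,a,p,q,N),
\]
where $L$ is a linear form vanishing exactly on $\mathcal{D}$ and having definite sign on each side. With this identity, parts (i) and (iii) follow as in Theorem \ref{cla}. Below $\mathcal{D}$, integrating $\mathcal{E}_{P}$ from $0$ to $\infty$ against a putative radial G.S. contradicts the boundary values at the endpoints, hence no G.S. exists and Theorem \ref{equi}(ii) supplies a Dirichlet solution. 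Above or on $\mathcal{D}$, assuming for contradiction that no G.S. exists, Theorem \ref{equi}(i) produces a regular trajectory reaching $X(T)=\tfrac{N-p}{p-1}$, $Y(T)=\tfrac{N-q}{q-1}$ at finite $T$; the sign of $\mathcal{E}_{P}(T)$ is then incompatible with that of $\mathcal{E}_{P}'$ above $\mathcal{D}$, forcing the existence of a G.S.

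For part (ii), on $\mathcal{D}$ the fixed points $M^{*}$ and $A_{0}$ coincide and the Jacobian of $(M)$ at this double equilibrium carries an eigenvalue whose sign is that of $\lambda^{*}$. When $\lambda^{*}<0$ the G.S. is driven to $A_{0}$ along the stable manifold, so $u$ and $v$ both decay at the fundamental rates, yielding \eqref{iu}; when $\lambda^{*}>0$ the orbit follows a center-type direction forcing the asymmetric scale-invariant rate \eqref{id} on $v$; when $\lambda^{*}=0$ the linearization is degenerate and a center-manifold computation produces the logarithmic correction \eqref{it}. The implications $p=q\Rightarrow\lambda^{*}<0$ and $q\le m+1\Rightarrow\lambda^{*}<0$ are straightforward algebraic checks.

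The main obstacle will be the construction of $\mathcal{E}_{P}$ in the correct factored form. The two equations of $(SP)$ carry different quasilinear scalings ($|u'|^{p}$ vs.\ $|v'|^{q}$) and the potential $u^{s+1}v^{m+1}$ is only a quasi-gradient rather than an honest one, so the coefficients of the various terms in $\mathcal{E}_{P}$ must be balanced by an \emph{ad hoc} computation analogous to the ones leading to \eqref{Pom} and \eqref{Pon}. Once $\mathcal{E}_{P}'$ is shown to factor through the single linear form $L$ vanishing on $\mathcal{D}$, the dichotomy and the asymptotic analysis proceed routinely.
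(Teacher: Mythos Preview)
Your outline for parts (i) and (iii) is essentially the paper's own argument: the energy $\mathcal{E}_{P}$ of \eqref{ppo}/\eqref{ene} satisfies
\[
\mathcal{E}_{P}'(r)=\Bigl(N+a-(s+1)\tfrac{N-p}{p}-(m+1)\tfrac{N-q}{q}\Bigr)r^{N-1+a}u^{s+1}v^{m+1},
\]
and the sign of the bracket, together with Theorem~\ref{equi} and the estimate $\mathcal{E}_{P}(r)=O(r^{N-(\gamma+1)p})$ in the subcritical case, gives the dichotomy. (A minor point: the natural prefactor is not $r^{N}uv$ but $r^{N-2-a}|u'|^{p-1}|v'|^{q-1}u^{-s}v^{-m}$, cf.\ \eqref{ppo}; your ansatz would need adjusting.)

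Your treatment of part (ii), however, contains a genuine error. On $\mathcal{D}$ one has $(\gamma,\xi)=(\tfrac{N-p}{p},\tfrac{N-q}{q})$, so the scale-invariant equilibrium is
\[
M_{0}=\Bigl(\tfrac{N-p}{p},\tfrac{N-q}{q},\tfrac{N-p}{p},\tfrac{N-q}{q}\Bigr),
\]
which is \emph{not} $A_{0}=(\tfrac{N-p}{p-1},\tfrac{N-q}{q-1},0,0)$; the two never coalesce. The fixed points actually governing the G.S.\ asymptotics are $A_{0}$ and $P_{0}=(\tfrac{N-p}{p-1},Y_{\ast},0,W_{\ast})$ (cf.\ Proposition~\ref{cpo}), and it is $P_{0}$ that merges with $A_{0}$ precisely when $\lambda^{\ast}=0$. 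So your ``linearize at the double equilibrium $M^{\ast}=A_{0}$'' scheme targets the wrong point.

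More seriously, you are missing the step that shows the G.S.\ trajectory converges to a fixed point at all. In the paper this is the heart of part (ii): one uses the zero-energy constraint \eqref{war} to eliminate $W$ and reduce to a three-dimensional system in $(X,Y,Z)$, and then proves by a direct extremum argument that $X$ and $Y$ are monotone increasing (any maximum of $X$ forces $p>s+1$ and $Y_{t}>0$ there, and a max-then-min of $X$ yields a contradiction via the $Z$ equation). Only once monotonicity of $X,Y$ and then of $Z,W$ is established can one invoke the linearizations at $A_{0}$ and $P_{0}$ (Propositions~\ref{cao}, \ref{cpo}) to read off \eqref{iu}--\eqref{it}. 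Without this monotonicity argument the trajectory could in principle oscillate inside the box $\mathcal{A}$ and the asymptotics would not follow.
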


In that case we use the following energy function, which deserves to be
compared with the one of Section \ref{HS} , since it has also a quadratic
factor:%
\[
\mathcal{E}_{P}(r)=r^{N}\left[  (s+1)(\frac{\left\vert u^{\prime}\right\vert
^{p}}{p^{\prime}}+\frac{N-p}{p}\frac{u\left\vert u^{\prime}\right\vert
^{p-2}u^{\prime}}{r})+(m+1)(\frac{\left\vert v^{\prime}\right\vert ^{q}%
}{q^{\prime}}+\frac{N-q}{q}\frac{v\left\vert v^{\prime}\right\vert
^{q-2}v^{\prime}}{r})+r^{a}u^{s+1}v^{m+1}\right]
\]%
\begin{equation}
=r^{N-2-a}\frac{\left\vert u^{\prime}\right\vert ^{p-1}\left\vert v^{\prime
}\right\vert ^{q-1}}{u^{s}v^{m}}\left[  ZW-\frac{(s+1)W(N-p-(p-1)X)}{p}%
-\frac{(m+1)Z(N-q-(q-1)Y)}{q}\right]  . \label{ppo}%
\end{equation}
\bigskip

Finally in Section \ref{nonradial} we deduce a \underline{\textit{nonradial}}
result for the potential system in the case of two Laplacians:%
\[
(SL)\left\{
\begin{array}
[c]{c}%
-\Delta u=\left\vert x\right\vert ^{a}u^{s}v^{m+1},\\
-\Delta v=\left\vert x\right\vert ^{a}u^{s+1}v^{m}.
\end{array}
\right.
\]
Our result proves a conjecture proposed in \cite{BR}, showing that in the
subcritical case there exists no G.S.:

\begin{theorem}
\label{solve}Assume $a>-2$ and $s,m\geqq0.$ If
\begin{equation}
s+m+1<\min(\frac{N+2}{N-2},\frac{N+2+2a}{N-2}), \label{naj}%
\end{equation}
then system $(SL)$ admits no (radial or nonradial) G.S.\medskip
\end{theorem}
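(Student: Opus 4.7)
The plan is to argue by contradiction: assume $(u,v)$ is a positive ground state of $(SL)$ and derive an incompatibility from a global Pohozaev-Rellich identity on $\mathbb{R}^N$ built from the same multipliers that underlie the radial energy function $\mathcal{E}_P$ of (\ref{ppo}).

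The key technical step, and the main obstacle, is to establish that any such G.S. decays fast enough at infinity to justify all integrations by parts over $\mathbb{R}^N$: concretely, we need $\int|\nabla u|^2$, $\int|\nabla v|^2$ and $\int|x|^a u^{s+1}v^{m+1}$ to be finite and the boundary contributions at infinity coming from the multipliers $u$, $v$, $(s+1)\,x\cdot\nabla u$, $(m+1)\,x\cdot\nabla v$ to vanish. This is where the first bound in (\ref{naj}), $s+m+1<(N+2)/(N-2)$, enters: it is the scalar Sobolev exponent of $-\Delta$, and by a doubling/rescaling argument combined with the Gidas--Spruck and weighted Bidaut-V\'eron Liouville theorems for $-\Delta w = |x|^a w^p$ in the subcritical range, it produces uniform pointwise decay estimates for $u$, $v$ and their gradients. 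The system $(SL)$ is only genuinely variational when $s=m$, so a direct Nehari/energy approach is not available, which is what makes this step delicate.

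Granting the decay, I would test the two equations of $(SL)$ against $u$ and against $v$ and integrate by parts to get
\[
\int_{\mathbb{R}^N}|\nabla u|^2 \;=\; \int_{\mathbb{R}^N}|\nabla v|^2 \;=\; \int_{\mathbb{R}^N}|x|^a u^{s+1}v^{m+1} \;=:\; K \;>\;0.
\]
Next, test the first equation against $(s+1)\,x\cdot\nabla u$ and the second against $(m+1)\,x\cdot\nabla v$, sum, and integrate by parts: the classical Pohozaev identity $\int(-\Delta w)(x\cdot\nabla w)=-\tfrac{N-2}{2}\int|\nabla w|^2$ handles the left side, while $\operatorname{div}(x|x|^a)=(N+a)|x|^a$ handles the right, producing
\[
\tfrac{N-2}{2}\bigl[(s+1)\|\nabla u\|_2^2 + (m+1)\|\nabla v\|_2^2\bigr] \;=\; (N+a)\int_{\mathbb{R}^N}|x|^a u^{s+1}v^{m+1}.
\]
Inserting $\|\nabla u\|_2^2 = \|\nabla v\|_2^2 = K$ collapses this to $\bigl[\tfrac{N-2}{2}(s+m+2)-(N+a)\bigr]K=0$, i.e.\ $s+m+1=(N+2+2a)/(N-2)$. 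The second bound in (\ref{naj}) forces the bracketed factor to be strictly negative while $K>0$, a contradiction. Hence no ground state exists. The two inequalities in (\ref{naj}) thus play complementary roles: the scalar Sobolev bound is used for the a priori decay that gives meaning to the integrals, while the weighted Pohozaev bound is precisely what makes the resulting global identity impossible.
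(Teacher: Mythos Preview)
Your Pohozaev computation is correct \emph{once} the boundary terms at infinity vanish, but the decay claim you make to justify this is where the argument breaks. The classification of \cite[Corollary 6.4]{BR}, which is what the subcritical bound $s+m+1<(N+2)/(N-2)$ actually buys, allows three asymptotic behaviours for a ground state, and one of them is
\[
\lim_{|x|\to\infty}u=\alpha>0,\qquad |x|^{(2+a)/(m-1)}v\ \text{bounded},
\]
when $m>(N+a)/(N-2)$ (or the symmetric statement with $s$). In this regime $u$ does not decay at all; the flux $\int_{\partial B_R}u\,\partial_\nu u$ tends to a nonzero limit $-\alpha\ell^2$, and your identity $\int|\nabla u|^2=\int|x|^a u^{s+1}v^{m+1}$ fails. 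This is not a corner case: it is exactly the range $N+a<4$ that was left open after \cite{BR}, and the paper states explicitly that the result was already known for $N+a\geq 4$.

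The paper attacks the same trichotomy. For the two decaying profiles it uses the energy $E_L$ of \cite{BR}. For the non-decaying profile it introduces a \emph{new} sphere-integrated energy $\mathcal{E}_L(r)$, built from precisely your multipliers $(s+1)x\cdot\nabla u$, $(m+1)x\cdot\nabla v$, $\tfrac{N-2}{2}u$, $\tfrac{N-2}{2}v$, but kept as a function of $r$ instead of being pushed to a volume integral on $\mathbb{R}^N$. It then exploits the exponential convergence of $u$ to $\alpha$ (equation (\ref{mum})) to show that $\mathcal{E}_L(r)\to -(N-2)\tfrac{s+1}{2}\alpha\ell^2<0$, contradicting $\mathcal{E}_L\geq 0$. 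So your choice of multipliers is right; what is missing is that in the bad case you must track the sphere energy and compute its limit directly, rather than assume the boundary contributions disappear.
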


Our proof uses the estimates of \cite{BR}, which up to now are the only
extensions of the results of \cite{GiSp} to systems. It is based on the
construction of a nonradial Pohozaev function extending the radial one given
at (\ref{ppo}) for $p=q=2$, different from the energy function used in
\cite{BR}. \medskip

The case of the system $(G)$ with absorption terms ($\varepsilon
_{1}=\varepsilon_{2}=-1)$ or mixed terms ($\varepsilon_{1}=-\varepsilon
_{2}=1)$, studied in \cite{BG}, \cite{BGm}, will be the subject of a second
article. Our approach also extends to a system with gradient terms,
\begin{equation}
\left\{
\begin{array}
[c]{c}%
-\Delta_{p}u=\varepsilon_{1}\left\vert x\right\vert ^{a}u^{s}v^{\delta
}\left\vert \nabla u\right\vert ^{\eta}\left\vert \nabla v\right\vert ^{\ell
},\\
-\Delta_{q}v=\varepsilon_{2}\left\vert x\right\vert ^{b}u^{\mu}v^{m}\left\vert
\nabla u\right\vert ^{\nu}\left\vert \nabla v\right\vert ^{\kappa},
\end{array}
\right.  \label{G}%
\end{equation}
which will be studied in another work.\medskip

\textbf{Acknoledgment} The authors are grateful to Raul Manasevich whose
stimulating discussions encouraged us to study system $(G)$.

\section{Reduction to a quadratic system\label{sys}}

\subsection{The change of unknowns}

Here we consider the radial positive solutions $r\mapsto(u(r),v(r))$ of system
$(G)$ on any interval $\left(  R_{1},R_{2}\right)  $, that means
\[
\left\{
\begin{array}
[c]{c}%
\left(  \left\vert u^{\prime}\right\vert ^{p-2}u^{\prime}\right)  ^{\prime
}+\frac{N-1}{r}\left\vert u^{\prime}\right\vert ^{p-2}u^{\prime}%
=r^{1-N}\left(  r^{N-1}\left\vert u^{\prime}\right\vert ^{p-2}u^{\prime
}\right)  ^{\prime}=-\varepsilon_{1}r^{a}u^{s}v^{\delta},\\
\left(  \left\vert v^{\prime}\right\vert ^{q-2}v^{\prime}\right)  ^{\prime
}+\frac{N-1}{r}\left\vert v^{\prime}\right\vert ^{q-2}v^{\prime}%
=r^{1-N}\left(  r^{N-1}\left\vert v^{\prime}\right\vert ^{p-2}v^{\prime
}\right)  ^{\prime}=-\varepsilon_{2}r^{b}u^{\mu}v^{m}.
\end{array}
\right.
\]
Near any point $r$ where $u(r)\neq0,u^{\prime}(r)\neq0$ and $v(r)\neq0,$
$v^{\prime}(r)\neq0$ we define
\begin{equation}
X(t)=-\frac{ru^{\prime}}{u},\quad Y(t)=-\frac{rv^{\prime}}{v},\quad
Z(t)=-\varepsilon_{1}r^{1+a}u^{s}v^{\delta}\left\vert u^{\prime}\right\vert
^{-p}u^{\prime},\quad W(t)=-\varepsilon_{2}r^{1+b}u^{\mu}v^{m}\left\vert
v^{\prime}\right\vert ^{-q}v^{\prime}, \label{xyt}%
\end{equation}
where $t=\ln r.$ Then we find the system%
\[
(M)\left\{
\begin{array}
[c]{c}%
X_{t}=X\left[  X-\frac{N-p}{p-1}+\frac{Z}{p-1}\right]  ,\\
Y_{t}=Y\left[  Y-\frac{N-q}{q-1}+\frac{W}{q-1}\right]  ,\\
Z_{t}=Z\left[  N+a-sX-\delta Y-Z\right]  ,\\
W_{t}=W\left[  N+b-\mu X-mY-W\right]  .
\end{array}
\right.
\]
This sytem is \underline{\textit{quadratic}}, and moreover a very simple one,
of \underline{\textit{Kolmogorov type}}: it admits four invariant hyperplanes:
$X=0,Y=0,Z=0,W=0$. As a first consequence all the fixed points of the system
are explicite. The trajectories located on these hyperplanes do not correspond
to a solution of system $(G);$ they will be called \textit{\underline
{\textit{nonadmissible}}}. \medskip

We suppose that the discriminant of the system
\begin{equation}
D=\delta\mu-(p-1-s)(q-1-m)\neq0. \label{dis}%
\end{equation}
Then one can express $u,v$ in terms of the new variables:
\begin{equation}
u\mathbf{=}r^{-\gamma}\mathbf{(}\left\vert X\right\vert ^{p-1}\left\vert
Z\right\vert )^{(q-1-m)/D}\mathbf{(}\left\vert Y\right\vert ^{q-1}\left\vert
W\right\vert )^{\delta/D},\mathbf{\qquad}v\mathbf{=}r^{-\xi}\mathbf{(}%
\left\vert X\right\vert ^{p-1}\left\vert Z\right\vert )^{\mu/D}\mathbf{(}%
\left\vert Y\right\vert ^{q-1}\left\vert W\right\vert )^{(p-1-s)/D},
\label{exuv}%
\end{equation}
where $\gamma$ and $\xi$ are defined by
\begin{equation}
\gamma=\frac{(p+a)(q-1-m)+(q+b)\delta}{D},\qquad\xi=\frac
{(q+b)(p-1-s)+(p+a)\mu}{D}, \label{ga}%
\end{equation}
or equivalently by
\begin{equation}
(p-1-s)\gamma+p+a=\delta\xi,\qquad(q-1-m)\xi+q+b=\mu\gamma. \label{gk}%
\end{equation}
Since system $(M)$ is autonomous, each admissible trajectory $\mathcal{T}$ in
the phase space corresponds to a solution $(u,v)$ of system $(G)$ unique up to
a scaling: if $(u,v)$ is a solution, then for any $\theta>0$, $r\mapsto
(\theta^{\gamma}u(\theta r),\theta^{\xi}v(\theta r))$ is also a solution.

\subsection{Fixed points of system (M)}

System $(M)$ has at most $16$ fixed points. The main fixed point is%

\begin{equation}
M_{0}=\left(  X_{0},Y_{0},Z_{0},W_{0}\right)  =\left(  \gamma,\xi
,N-p-(p-1)\gamma,N-q-(q-1)\xi\right)  , \label{FIXE}%
\end{equation}
corresponding to the particular solutions
\begin{equation}
u_{0}(r)=Ar^{-\gamma},v_{0}(r)=Br^{-\xi},\quad A,B>0, \label{zero}%
\end{equation}
when they exist, depending on $\varepsilon_{1},\varepsilon_{2}.$ The values of
$A$ and $B$ are given by
\begin{align*}
A^{D}  &  =\left(  \varepsilon_{1}\gamma^{p-1}(N-p-\gamma(p-1))\right)
^{q-1-m}\left(  \varepsilon_{2}\xi^{q-1}(N-q-(q-1)\xi)\right)  ^{\delta},\\
B^{D}  &  =\left(  \varepsilon_{2}\xi^{q-1}(N-q-(q-1)\xi)\right)
^{p-1-s}\left(  \varepsilon_{1}\gamma^{p-1}(N-p-(p-1)\gamma)\right)  ^{\mu}.
\end{align*}
The other fixed points are%
\begin{align*}
0  &  =(0,0,0,0),\quad N_{0}=(0,0,N+a,N+b),\quad A_{0}=(\frac{N-p}{p-1}%
,\frac{N-q}{q-1},0,0),\\
I_{0}  &  =(\frac{N-p}{p-1},0,0,0),\quad J_{0}=(0,\frac{N-q}{q-1},0,0),\quad
K_{0}=(0,0,N+a,0),\quad L_{0}=(0,0,0,N+b),\\
G_{0}  &  =(\frac{N-p}{p-1},0,0,N+b-\frac{N-p}{p-1}\mu),\quad H_{0}%
=(0,\frac{N-q}{q-1},N+a-\frac{N-q}{q-1}\delta,0),
\end{align*}
\textbf{ }and if $m\neq q-1,$%
\begin{align*}
P_{0}  &  =(\frac{N-p}{p-1},\frac{\frac{N-p}{p-1}\mu-(q+b)}{q-1-m}%
,0,\frac{(q-1)(N+b-\frac{N-p}{p-1}\mu)-m(N-q)}{q-1-m}),\\
C_{0}  &  =\left(  0,-\frac{q+b}{q-1-m},0,\frac{(N+b)(q-1)-m(N-q)}%
{q-1-m}\right)  ,\\
R_{0}  &  =\left(  0,-\frac{q+b}{q-1-m},N+a+\delta\frac{b+q}{q-1-m}%
,\frac{(N+b)(q-1)-m(N-q)}{q-1-m}\right)  ,
\end{align*}
and by symmetry, if $s\neq p-1,$%
\begin{align*}
Q_{0}  &  =(\frac{\frac{N-q}{q-1}\delta-(p+a)}{p-1-s},\frac{N-q}{q-1}%
,\frac{(p-1)(N+a-\frac{N-q}{q-1}\delta)-s(N-p)}{p-1-s},0),\\
D_{0}  &  =\left(  -\frac{p+a}{p-1-s},0,\frac{(N+a)(p-1)-s(N-p)}%
{p-1-s},0\right)  ,\\
S_{0}  &  =\left(  -\frac{p+a}{p-1-s},0,\frac{(N+a)(p-1)-s(N-p)}%
{p-1-s},N+b+\mu\frac{a+p}{p-1-s}\right)  .
\end{align*}

\subsection{First comments}

\begin{remark}
This formulation allows to treat more general systems with signed solutions by
reducing the study on intervals where $u$ and $v$ are nonzero. Consider for
example the problem%
\[
-\Delta_{p}u=\varepsilon_{1}\left\vert x\right\vert ^{a}\left\vert
u\right\vert ^{s}\left\vert v\right\vert ^{\delta-1}v,\quad-\Delta
_{q}v=\varepsilon_{2}\left\vert x\right\vert ^{b}\left\vert v\right\vert
^{m}\left\vert u\right\vert ^{\mu-1}u.
\]
0n any interval where $uv>0,$ the couple $(\left\vert u\right\vert ,\left\vert
v\right\vert )$ is a solution of $(G).$ On any interval where $u>0>v,$ the
couple $(u,\left\vert v\right\vert )$ satisfies $(G)$ with $(\varepsilon
_{1},\varepsilon_{2})$ replaced by $(-\varepsilon_{1},-\varepsilon_{2}%
).$\medskip
\end{remark}

\begin{remark}
\label{vauv}There is another way for reducing the system to an autonomous
form: setting
\[
\mathrm{U}(t)=r^{\gamma}u,\quad\mathrm{V}(t)=r^{\xi}v,\quad\mathrm{H}%
(t)=-r^{\left(  \gamma+1\right)  (p-1)}\left\vert u^{\prime}\right\vert
^{p-2}u^{\prime},\quad\mathrm{K}(t)=-r^{\left(  \xi+1\right)  (q-1)}\left\vert
v^{\prime}\right\vert ^{q-2}v^{\prime},
\]
with $t=\ln r,$ we find
\begin{equation}
\left\{
\begin{array}
[c]{c}%
\mathrm{U}_{t}=\gamma\mathrm{U}-\left\vert \mathrm{H}\right\vert
^{(2-p)/(p-1)}\mathrm{H}\mathbf{,\qquad}\mathrm{V}_{t}=\zeta\mathrm{U}%
-\left\vert \mathrm{K}\right\vert ^{(2-q)/(q-1)}\mathrm{K}\mathbf{,}\\
\mathrm{H}_{t}=(\gamma(p-1)+p-N)\mathrm{H}+\varepsilon_{1}\mathrm{U}%
^{s}\mathrm{V}^{\delta},\mathbf{\qquad}\mathrm{K}_{t}=(\zeta
(q-1)+q-N)\mathrm{K}+\varepsilon_{2}\mathrm{U}^{\mu}\mathrm{V}^{m}.
\end{array}
\right.  \label{deh}%
\end{equation}
It extends the well-known transformation of Emden-Fowler in the scalar case
when $p=2,$ used also in \cite{B} for general $p,$ see Section \ref{scal}.
When $p=q=2$ we obtain
\begin{equation}
\left\{
\begin{array}
[c]{c}%
\mathrm{U}_{tt}+(N-2-2\gamma)\mathrm{U}_{t}-\gamma(N-2-\gamma)\mathrm{U}%
+\varepsilon_{1}\mathrm{U}^{s}\mathrm{V}^{\delta}=0,\\
\mathrm{V}_{tt}+(N-2-2\xi)\mathrm{V}_{t}-\xi(N-2-\xi)\mathrm{V}+\varepsilon
_{2}\mathrm{U}^{\mu}\mathrm{V}^{m}=0,
\end{array}
\right.  \label{uvs}%
\end{equation}
which was extended to the nonradial case and used for Hamiltonian systems
($s=m=0),$ with source terms in \cite{BuMa} ($\varepsilon_{1}=\varepsilon
_{2}=1)$ and absorption terms in \cite{BG} ($\varepsilon_{1}=\varepsilon
_{2}=-1)$. Our system is more adequated for finding the possible behaviours:
unlike system (\ref{deh})it has no singularity, since it is polynomial, also
its fixed points at $\infty$ are not concerned when we deal with solutions
$u,v>0.$\medskip
\end{remark}

\begin{remark}
\label{hul}In the specific case $p=q=2,$ setting
\[
z=XZ=\varepsilon_{1}r^{2+a}\left\vert u\right\vert ^{s-2}u\left\vert
v\right\vert ^{\delta-1}v,\qquad w=YW=\varepsilon_{2}r^{2+b}\left\vert
u\right\vert ^{\mu-1}u\left\vert v\right\vert ^{m-2}v,
\]
we get the following system%
\[
\left\{
\begin{array}
[c]{c}%
X_{t}=X^{2}-(N-2)X+z,\mathbf{\qquad}Y_{t}=Y^{2}-(N-2)Y+w,\\
z_{t}=z\left[  2+a+(1-s)X-\delta Y\right]  ,\mathbf{\qquad}w_{t}=w\left[
2+b-\mu X+(1-m)Y\right]  .
\end{array}
\right.
\]
It has been used in \cite{HuVo} for studying the Hamiltonian system $(SH)$.
Even in that case we will show at Section \ref{HS} that system $(M)$ is more
performant, because it is of Kolmogorov type.\medskip
\end{remark}

\begin{remark}
\label{duc} Assume\textbf{ }$p=q$\textbf{ }and\textbf{ }$a=b.$ Setting
$t=k\hat{t}$ and $\left(  \hat{X},\hat{Y},\hat{Z},\hat{W}\right)
=k(X,Y,Z,W)$, we obtain a system of the same type with $N,a$ replaced by
$\hat{N},\hat{a},$ with
\[
\frac{\hat{N}-p}{N-p}=k=\frac{\hat{N}+\hat{a}}{N+a}.
\]
It corresponds to the change of unknowns
\[
r=\hat{r}^{k},\qquad\hat{u}(\hat{r})=C_{1}u(r),\quad\hat{v}(\hat{r}%
)=C_{2}u(r),\qquad C_{1}=k^{p(p-1-m+\delta)/D},\quad C_{2}=k^{(p(p-1-s+\mu
))/D}.
\]
From (\ref{exuv}) and (\ref{ga}), we get $\hat{\gamma}/\gamma=\hat{\xi}%
/\xi=k=\frac{p+\hat{a}}{p+a}.$ There is one free parameter. In particular

1) we get a system without power ($\hat{a}=0),$ by taking
\[
\hat{N}=\frac{p(N+a)}{p+a},\qquad k=\frac{p}{p+a};
\]

2) we get a system in dimension $\hat{N}=1,$ by taking
\[
k=-\frac{p-1}{N-p}<0,,\qquad\hat{a}=\frac{p+a-(N+a)p}{N-p}.
\]

\end{remark}

\section{The scalar case\label{scal}}

We first study the signed solutions of two scalar equations with source or
absorption:
\begin{equation}
-\Delta_{p}u=-r^{1-N}\left(  r^{N-1}\left\vert u^{\prime}\right\vert
^{p-2}u^{\prime}\right)  ^{\prime}=\varepsilon\left\vert x\right\vert
^{a}\left\vert u\right\vert ^{Q-1}u, \label{sca}%
\end{equation}
with $\varepsilon=\pm1,$ $1<p<N,$ $Q\neq p-1$ and $p+a>0.\medskip$

We cannot quote all the huge litterature concerning its solutions,
supersolutions or subsolutions, from the first studies of Emden and Fowler for
$p=2,$ recalled in \cite{EF}; see for example \cite{B} and \cite{V}, for any
$p>1,$ and references therein. We set
\[
Q_{1}=\frac{(N+a)(p-1)}{N-p},\qquad Q_{2}=\frac{N(p-1)+p+pa}{N-p},\qquad
\gamma=\frac{p+a}{Q+1-p}.
\]
From Remark \ref{duc} we could reduce the system to the case $a=0$, in
dimension $\hat{N}=p(N+a)/(p+a).$ However we do not make the reduction,
because we are motivated by the study of system $(G)$, and also by the
nonradial case.

\subsection{A common phase plane for the two equations}

Near any point $r$ where $u(r)\neq0$ (positive or negative), and $u^{\prime
}(r)\neq0$ setting
\begin{equation}
X(t)=-\frac{ru^{\prime}}{u},\qquad Z(t)=-\varepsilon r^{1+a}\left\vert
u\right\vert ^{Q-1}u\left\vert u^{\prime}\right\vert ^{-p}u^{\prime},
\label{xz}%
\end{equation}
with $t=\ln r,$ we get a 2-dimensional system
\[
(M_{scal})\left\{
\begin{array}
[c]{c}%
X_{t}=X\left[  X-\frac{N-p}{p-1}+\frac{Z}{p-1}\right]  ,\\
Z_{t}=Z\left[  N+a-QX-Z\right]  .
\end{array}
\right.
\]
and then $\left\vert u\right\vert \mathbf{=}r^{-\gamma}\mathbf{(}\left\vert
Z\right\vert \left\vert X\right\vert ^{p-1})^{1/(Q+1-p)}.$ This change of
unknown was mentioned in \cite{ChiTi} in the case $p=2,\varepsilon=1$ and
$N=3.$ It is remarkable that system $(M_{scal})$ is the same for the
\textit{two cases }$\varepsilon=\pm1$, the only difference is that $X(t)Z(t)$
has the sign of $\varepsilon:\medskip$

The equation with source $(\varepsilon=1)$ is associated to the 1$^{st}$ and
$3^{rd}$ quadrant. It is well known that any local solution has a unique
extension on $\left(  0,\infty\right)  .$ The 1$^{st}$ quadrant corresponds to
the intervals where $\left\vert u\right\vert $ is decreasing, which can be of
the following types $\left(  0,\infty\right)  ,(0,R_{2})$,$\left(
R_{1},\infty\right)  $,$\left(  R_{1},R_{2}\right)  $, $0<R_{1}<R_{2}<\infty.$
The $3^{rd}$ quadrant corresponds to the intervals $\left(  R_{1}%
,R_{2}\right)  $ where $\left\vert u\right\vert $ is increasing.$\medskip$

The equation with absorption $(\varepsilon=-1)$ is associated to the $2^{nd}$
and $4^{th}$ quadrant. It is known that the solutions have at most one zero,
and their maximal interval of existence can be $(0,R_{2}),(R_{1}%
,\infty),\left(  R_{1},R_{2}\right)  $ or $(0,\infty).$ The $2^{nd}$ quadrant
corresponds to the intervals $\left(  R_{1},R_{2}\right)  $ where $\left\vert
u\right\vert $ is increasing. The $4^{th}$ quadrant corresponds to the
intervals $(0,R_{2})$ or $\left(  R_{1},\infty\right)  $ where $\left\vert
u\right\vert $ is decreasing.$\medskip$

The fixed points of $(M_{scal})$ are
\[
M_{0}=(X_{0},,Z_{0})=(\gamma,N-p-(p-1)\gamma),\quad(0,0),\quad N_{0}%
=(0,N+a),\quad A_{0}=(\frac{N-p}{p-1},0).
\]
In particular $M_{0}$ is in the 1$^{st}$ quadrant whenever $\gamma<\frac
{N-p}{p-1},$ equivalently $Q>Q_{1},$  and in the $4^{th}$ quadrant whenever
$Q<Q_{1}$. It corresponds to the solution
\[
u(r)=Ar^{-\gamma},\text{ \quad for }\varepsilon=1,Q>Q_{1},\quad\text{ or
}\varepsilon=-1,Q<Q_{1},
\]
where $A=\left(  \varepsilon\gamma^{p-1}(N-p-\gamma(p-1))\right)
^{1/(Q-p+1)}.$

\subsection{Local study}

We examine the fixed points, where for simplicity we suppose $Q\neq Q_{1},$
and we deduce local results for the two equations:$\medskip$

$\bullet$ Point $(0,0):$ it is a saddle point, and the only trajectories that
converge to $(0,0)$ are the separatrix, contained in the lines $X=0,Y=0,$ they
are not admissible.\medskip

$\bullet$ Point $N_{0}:$ it is a saddle point: the eigenvalues of the
linearized system are $\frac{p}{p-1}$ and $-N$. the trajectories ending at
$N_{0}$ at $\infty$ are located on the set $Z=0,$ then there exists a unique
trajectory starting from $-\infty$ at $N_{0}$; it corresponds to the local
existence and uniqueness of regular solutions, which we obtain easily.\medskip

$\bullet$ Point\textbf{ }$A_{0}:$ the eigenvalues of the linearized system are
$\frac{N-p}{p-1}$ and $\frac{N-p}{p-1}(Q_{1}-Q)$. If $Q<Q_{1},$ $A_{0}$ is an
unstable node. There is an infinity of trajectories starting from $A_{0}$ at
$-\infty;$ then $X(t)$ converges exponentially to $\frac{N-p}{p-1},$ thus
$\lim_{r\rightarrow0}$ $r^{\frac{N-p}{p-1}}u=\alpha>0.$ The corresponding
solutions $u$ satisfy the equation with a Dirac mass at $0.$ There exists no
solution converging to $A_{0}$ at $\infty.$ If $Q>Q_{1},$ $A_{0}$ is a saddle
point; the trajectories starting from $A_{0}$ at $-\infty$ are not
admissible$;$ there is a trajectory converging at $\infty,$ and then
$\lim_{r\rightarrow\infty}$ $r^{\frac{N-p}{p-1}}u=\alpha>0.$\medskip

$\bullet$ Point\textbf{ }$M_{0}:$ the eigenvalues $\lambda_{1},\lambda_{2}$ of
the linearized system are the roots of equation
\[
\lambda^{2}+(Z_{0}-X_{0})\lambda+\frac{Q-p+1}{p-1}X_{0}Z_{0}=0.
\]
For $\varepsilon=1$, $M_{0}$ is defined for $Q>Q_{1};$ the eigenvalues are
imaginary when $X_{0}=Z_{0},$ equivalently $\gamma=(N-p)/p,$ $Q=Q_{2}$. When
$Q<Q_{2},$ $M_{0}$ is a source, there exists an infinity of trajectories such
that $\lim_{r\rightarrow0}r^{\gamma}u=A$. When $Q>Q_{2},$ $M_{0}$ is a sink,
and there exists an infinity of trajectories such that $\lim_{r\rightarrow
\infty}r^{\gamma}u=A$. When $Q=Q_{2},$ $M_{0}$ is a center, from \cite{B} For
$\varepsilon=-1,$ $M_{0}$ is defined for $Q<Q_{1},$ it is a saddle-point.
There exist two trajectories $\mathcal{T}_{1},\mathcal{T}_{1}^{\prime}$
converging at $\infty,$ such that $\lim_{r\rightarrow\infty}r^{\gamma}u=A$ and
two trajectories $\mathcal{T}_{2},$ $\mathcal{T}_{2}^{\prime},$ converging at
$0,$ such that $\lim_{r\rightarrow0}r^{\gamma}u=A.$

\subsection{Global study}

\begin{remark}
System $(M_{scal})$ has no limit cycle for $Q\neq Q_{2}$. It is evident when
$\varepsilon=-1.$ When $\varepsilon=1,$ as noticed in \cite{GuVe}, it comes
from the Dulac's theorem: setting $X_{t}=f(X,Z),\quad Z_{t}=g(X,Z),$ and
\[
B(X,Z)=X^{pQ/(Q+1-p)-2}Z^{(p/(Q+1-p)-1},\quad M=B_{X}X_{t}+B_{Z}Z_{t}%
+B(f_{X}+g_{Z}),
\]
then $M=KB$ with $K=(Q_{2}-Q)\gamma(N-p)/p,$ thus $M$ has no zero for $Q\neq
Q_{2}.$\medskip
\end{remark}

Then from the Poincar\'{e}-Bendixson theorem, any trajectory bounded near
$\pm\infty$ converges to one of the fixed points. Thus we find again global
results: \medskip

$\bullet$ Equation with source $\left(  \varepsilon=1\right)  $. If $Q<Q_{1},$
there is no G.S.: the regular trajectory $\mathcal{T}$ issued from $N_{0}$
cannot converge to a fixed point. Then $X$ tends to $\infty$ and the regular
solutions $u$ are changing sign, there is no G.S..

If $Q_{1}<Q<Q_{2},$ the regular trajectory $\mathcal{T}$ cannot converge to
$M_{0};$ if it converges to $A_{0},$ it is the unique trajectory converging to
$A_{0}$; the set delimitated by $\mathcal{T}$ and $X=0,Z=0$ is invariant, thus
it contains $M_{0};$ and the trajectories issued from $M_{0}$ cannot converge
to a fixed point, which is contradictory. then again $X$ tends to $\infty$ on
$\mathcal{T}$ and the regular solutions $u$ are changing sign.. The trajectory
ending at $A_{0}$ converges to $M_{0}$ at $-\infty;$ then there exist
solutions $u>0$ such that $\lim_{r\rightarrow0}r^{\gamma}u=A$ and
$\lim_{r\rightarrow0}$ $r^{\frac{N-p}{p-1}}u=\alpha>0.$

If $Q>Q_{2},$ the only singular solution at $0$ is $u_{0},$ and the regular
solutions are G.S., with $\lim_{r\rightarrow\infty}r^{\gamma}u=A.$ Indeed
$M_{0}$ is a sink; the trajectory ending at $A_{0}$ cannot converge to $N_{0}$
at $-\infty$, thus $X$ converges to $0,$ and $Z$ converges to $\infty,$ then
$u$ cannot be positive on $(0,\infty).$The trajectory issued from $N_{0}$
converges to $M_{0}.$\medskip

$\bullet$ Equation with absorption $\left(  \varepsilon=-1\right)  $. If
$Q>Q_{1},$ all the solutions $u$ defined near $0$ are regular; indeed the
trajectories cannot converge to a fixed point.

If $Q<Q_{1},$  we find again easily a well known result: there exists a
positive solution $u_{1},$ unique up to a scaling, such that $\lim
_{r\rightarrow0}r^{\frac{N-p}{p-1}}u_{1}=\alpha>0,$ and $\lim_{r\rightarrow
\infty}r^{\gamma}u_{1}=A.$ Indeed the eigenvalues at $M_{0}$ satisfy
$\lambda_{1}<0<\lambda_{2}$. There are two trajectories $\mathcal{T}%
_{1},\mathcal{T}_{1}^{\prime}$ associated to $\lambda_{1},$ and the
eigenvector $(X_{0}+\left\vert \lambda_{1}\right\vert ,-\frac{X_{0}}{p-1}).$
The trajectory $\mathcal{T}_{1}$ satisfies $X_{t}>0>Z_{t}$ near $\infty,$ and
$X>\frac{N-p}{p-1},$ since $Z_{0}<0,$ and $X$ cannot take the value
$\frac{N-p}{p-1}$ because at such a point $X_{t}<0;$ then $\frac{N-p}%
{p-1}<X<X_{0}$ and $X_{t}>0$ as long as it is defined; similarly $Z_{0}<Z<0$
and $Z_{t}<0;$ then $\mathcal{T}_{1}$ converge to a fixed point, necessarily
$A_{0},$ showing the existence of $u_{1}.$ The trajectory $\mathcal{T}%
_{1}^{\prime}$ corresponds to solutions $u$ such that $\lim_{r\rightarrow
\infty}r^{\gamma}u=A$ and $\lim_{r\rightarrow R}u=\infty$ for some $R>0.$
There are two trajectories $\mathcal{T}_{2},$ $\mathcal{T}_{2}^{\prime},$
associated to $\lambda_{2},$ defining solutions $u$ such that $\lim
_{r\rightarrow0}r^{\gamma}u=A$ and changing sign, or with a minimum point and
$\lim_{r\rightarrow R}u=\infty$ for some $R>0.$ The regular trajectory starts
from $N_{0}$ in the $2^{nd}$ quadrant, it cannot converge to a fixed point,
then $\lim_{r\rightarrow R}u=\infty$ for some $R>0.$\medskip

$\bullet$ Critical case $Q=Q_{2}:$ it is remarkable that system $(M_{scal})$
admits \underline{\textit{another invariant line}}, namely $A_{0}N_{0},$ given
by
\begin{equation}
\frac{X}{p^{\prime}}+\frac{Z}{Q_{2}+1}-\frac{N-p}{p}=0.\label{line}%
\end{equation}
It precisely corresponds to well-known solutions of the two equations
\[
u=c(K^{2}+r^{(p+a)/(p-1)})^{(p-N)/(p+a)},\text{for }\varepsilon=1;\quad
u=c\left\vert K^{2}-r^{(p+a)/(p-1)}\right\vert ^{(p-N)/(p+a)},\text{for
}\varepsilon=-1,
\]
where $K^{2}=c^{Q-p+1}(N+a)^{-1}\left(  (N-p)/(p-1)\right)  ^{1-p}.$

\begin{remark}
\label{aut} The global results have been obtained \underline{without using
energy functions}\textbf{. } The study of \cite{B} was based on a reduction of
type of Remark \ref{vauv}, using an energy function linked to the new unknown.
Other energy functions are well-known, of Pohozaev type:
\[
\mathcal{F}_{\sigma}(r)=r^{N}\left[  \frac{\left\vert u^{\prime}\right\vert
^{p}}{p^{\prime}}+\varepsilon r^{a}\frac{\left\vert u\right\vert ^{Q+1}}%
{Q+1}+\sigma\frac{u\left\vert u^{\prime}\right\vert ^{p-2}u^{\prime}}%
{r}\right]  =r^{N-p}\left\vert u\right\vert ^{p}\left\vert X\right\vert
^{p-2}X\left[  \frac{X}{p^{\prime}}+\frac{Z}{Q+1}-\sigma\right]  ,
\]
with $\sigma=\frac{N-p}{p},$ satisfying $\mathcal{F}_{\sigma}^{\prime
}(r)=r^{N-1+a}\left(  \frac{N+a}{Q+1}-\frac{N-p}{p}\right)  \left\vert
u\right\vert ^{Q+1},$ or with $\sigma=\frac{N+a}{Q+1}$, leading to
$\mathcal{F}_{\sigma}^{\prime}(r)=r^{N-1}\left(  \frac{N+a}{Q+1}-\frac{N-p}%
{p}\right)  \left\vert u^{\prime}\right\vert ^{p}.$ In the critical case
$Q=Q_{2}$, all these functions coincide and they are constant, in other words
system $(M_{scal})$ has a first integral. We find again the line (\ref{line}):
the G.S. are the \underline{functions of energy\textbf{ }$0.$}
\end{remark}

\section{Local study of system $(S)$\label{local}}

In all the sequel we study the system with source terms: $(G)=(S)$. Assumption
(\ref{ht}) is the most interesting case for studying the existence of the G.S.
\medskip

We first study the local behaviour of nonnegative solutions $(u,v)$ defined
near $0$ or near $\infty.$ It is well known that any solution $(u,v)$ positive
on some interval $\left(  0,R\right)  $ satisfies $u^{\prime},v^{\prime}<0$ on
$\left(  0,R\right)  .$ Any solution $(u,v)$ positive on $(R,\infty),$
satisfies $u^{\prime},v^{\prime}<0$ near $\infty.$ We are reduced to study the
system in the region $\mathcal{R}$ where $X,Y,Z,W>0,$ and consider the fixed
points in $\mathcal{\bar{R}}.$ Then%
\begin{equation}
X(t)=-\frac{ru^{\prime}}{u},\quad Y(t)=-\frac{rv^{\prime}}{v},\quad
Z(t)=\frac{r^{1+a}u^{s}v^{\delta}}{\left\vert u^{\prime}\right\vert ^{p-1}%
},\quad W(t)=\frac{r^{1+b}v^{m}u^{\mu}}{\left\vert v^{\prime}\right\vert
^{q-1}}; \label{eto}%
\end{equation}
and ($X,Y,Z,W)$ is a solution of system $(M)$ in $\mathcal{R}$ if and only if
$(u,v)$ defined by
\begin{equation}
u\mathbf{=}r^{-\gamma}\mathbf{(}ZX^{p-1})^{(q-1-m)/D}\mathbf{(}WY^{q-1}%
)^{\delta/D},\mathbf{\qquad}v\mathbf{=}r^{-\xi}\mathbf{(}WY^{q-1}%
)^{(p-1-s)/D}\mathbf{(}ZX^{p-1})^{\mu/D} \label{for}%
\end{equation}
is a positive solution with $u^{\prime},v^{\prime}<0.$ Among the fixed points,
the point $M_{0}$ defined at (\ref{FIXE}) lies in $\mathcal{R}$ if and only
if
\begin{equation}
0<\gamma<\frac{N-p}{p-1}\quad\text{and \quad}0<\xi<\frac{N-q}{q-1}. \label{cd}%
\end{equation}
The local study of the system near $M_{0}$ appears to be tricky, see Remark
\ref{cet}. A main difference with the scalar case is that there always exist a
trajectory converging to $M_{0}$ at $\pm\infty:$

\begin{proposition}
\label{biz}(Point $M_{0})$ Assume that (\ref{cd}) holds. Then there exist
trajectories converging to $M_{0}$ as $r\rightarrow$ $\infty,$ and then
solutions $(u,v)$ being defined near $\infty,$ such that
\begin{equation}
\lim_{r\rightarrow\infty}r^{\gamma}u=\alpha>0,\quad\lim_{r\rightarrow\infty
}r^{\xi}v=\beta>0. \label{cov}%
\end{equation}
There exist trajectories converging to $M_{0}$ as $r\rightarrow0$, and thus
solutions $(u,v)$ being defined near $0$ such that
\begin{equation}
\lim_{r\rightarrow0}r^{\gamma}u=\alpha>0,\quad\lim_{r\rightarrow0}r^{\xi
}v=\beta>0. \label{cow}%
\end{equation}

\end{proposition}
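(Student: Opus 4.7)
The natural strategy is to linearize system $(M)$ at $M_0=(\gamma,\xi,Z_0,W_0)$, with $Z_0=N-p-(p-1)\gamma$ and $W_0=N-q-(q-1)\xi$, and invoke the Hadamard--Perron stable/unstable manifold theorem. Condition (\ref{cd}) places all four coordinates of $M_0$ strictly inside the open region $\mathcal{R}$, so any trajectory of $(M)$ close enough to $M_0$ remains in $\mathcal{R}$ and, through (\ref{for}), corresponds to a positive regular solution $(u,v)$ of $(S)$.

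The next step is to write out the Jacobian $J$ of $(M)$ at $M_0$. Exploiting the identities at a fixed point, $J$ acquires the $2\times 2$ block form
\[
J=\begin{pmatrix} \mathcal{A} & \mathcal{B} \\ \mathcal{C} & \mathcal{E}\end{pmatrix},\qquad \mathcal{A}=\operatorname{diag}(\gamma,\xi),\quad \mathcal{B}=\operatorname{diag}\!\bigl(\tfrac{\gamma}{p-1},\tfrac{\xi}{q-1}\bigr),\quad \mathcal{E}=\operatorname{diag}(-Z_0,-W_0),
\]
with $\mathcal{C}$ the $2\times 2$ matrix with entries $-sZ_0,-\delta Z_0,-\mu W_0,-mW_0$. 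Since $\mathcal{A}$ is invertible, the Schur-complement identity gives $\det J=\det\mathcal{A}\,\det(\mathcal{E}-\mathcal{C}\mathcal{A}^{-1}\mathcal{B})$. Because $\mathcal{A}^{-1}\mathcal{B}=\operatorname{diag}(1/(p-1),1/(q-1))$, the factors $\gamma,\xi$ drop out of the Schur complement and a short calculation yields
\[
\det J=-\frac{\gamma\xi\, Z_0 W_0}{(p-1)(q-1)}\bigl[\delta\mu-(p-1-s)(q-1-m)\bigr]=-\frac{\gamma\xi\, Z_0 W_0\, D}{(p-1)(q-1)},
\]
with $D$ the discriminant of (\ref{dis}). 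Under (\ref{cd}) and (\ref{ht}) every factor on the right is strictly positive, hence $\det J<0$.

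The remainder of the argument is qualitative. Since the product of the four eigenvalues of $J$ is strictly negative, the case of two pairs of complex conjugates (product nonnegative) is excluded, and in the remaining two cases the real spectrum contains at least one strictly positive and at least one strictly negative eigenvalue. By the Hadamard--Perron theorem, $M_0$ then carries a nontrivial local stable manifold and a nontrivial local unstable manifold. A trajectory on the stable manifold stays in $\mathcal{R}$ for $t$ large and, through (\ref{for}), gives a positive solution of $(S)$ defined near $r=\infty$; passing to the limit in (\ref{for}) yields (\ref{cov}) with
\[
\alpha=\bigl(Z_0\gamma^{p-1}\bigr)^{(q-1-m)/D}\bigl(W_0\xi^{q-1}\bigr)^{\delta/D}>0
\]
and an analogous positive $\beta$. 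Reversing time on the unstable manifold produces the dual statement (\ref{cow}) near $r=0$.

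The main hurdle I anticipate is not conceptual but algebraic: the $4\times 4$ Jacobian must be manipulated carefully so that the discriminant $D$ of (\ref{dis}) emerges with the sign that makes $\det J<0$ under the working hypothesis $D>0$. Once that is pinned down, the existence of the ingoing and outgoing admissible trajectories follows from the stable/unstable manifold theorem, and the conversion back to asymptotics on $(u,v)$ is immediate from (\ref{for}).
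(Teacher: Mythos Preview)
Your proposal is correct and follows essentially the same approach as the paper: linearize $(M)$ at $M_0$, show that the product of the four eigenvalues of the Jacobian equals $-\dfrac{D}{(p-1)(q-1)}X_0Y_0Z_0W_0<0$, conclude that at least one real eigenvalue is positive and one is negative, and invoke the stable/unstable manifold theorem together with (\ref{for}) to get (\ref{cov}) and (\ref{cow}). The only cosmetic difference is that the paper expands the full quartic characteristic polynomial $f(\lambda)=\lambda^{4}+E\lambda^{3}+F\lambda^{2}+G\lambda-H$ and reads off the constant term $-H$, whereas you reach the same determinant via the Schur complement of the $2\times2$ block structure; the paper's longer computation pays off later (Remark~\ref{cet} and Proposition~\ref{cent}) where the coefficients $E,F,G$ are needed to locate purely imaginary eigenvalues, but for the present proposition your shortcut is entirely adequate.
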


\begin{proof}
Here $M_{0}$ $\in\mathcal{R};$ setting $X=X_{0}+\tilde{X},Y=Y_{0}+\tilde
{Y},Z=Z_{0}+\tilde{Z},W=W_{0}+\tilde{W},$ the linearized system is%
\[
\left\{
\begin{array}
[c]{c}%
\tilde{X}_{t}=X_{0}(\tilde{X}+\frac{1}{p-1}\tilde{Z}),\\
\tilde{Y}_{t}=Y_{0}(\tilde{Y}+\frac{1}{q-1}\tilde{W}),\\
\tilde{Z}_{t}=Z_{0}(-s\tilde{X}-\delta\tilde{Y}-\tilde{Z}),\\
\tilde{W}_{t}=W_{0}(-\mu\tilde{X}-m\tilde{Y}-\tilde{W}).
\end{array}
\right.
\]
The eigenvalues are the roots $\lambda_{1},\lambda_{2},\lambda_{3},\lambda
_{4},$ of equation
\begin{equation}
\left[  (\lambda-X_{0})(\lambda+Z_{0})+\frac{s}{p-1}X_{0}Z_{0}\right]  \left[
(\lambda-Y_{0})(\lambda+W_{0})+\frac{m}{q-1}Y_{0}W_{0}\right]  -\frac
{\delta\mu}{(p-1)(q-1)}X_{0}Y_{0}Z_{0}W_{0}=0. \label{valp}%
\end{equation}
This equation is of the form
\[
f(\lambda)=\lambda^{4}+E\lambda^{3}+F\lambda^{2}+G\lambda-H=0,
\]
with
\[
\left\{
\begin{array}
[c]{c}%
E=Z_{0}-X_{0}+W_{0}-Y_{0},\\
F=(Z_{0}-X_{0})(W_{0}-Y_{0})-\frac{s+p-1}{p-1}X_{0}Z_{0}-\frac{m+q-1}%
{q-1}Y_{0}W_{0},\\
G=-\frac{q-1-m}{q-1}Y_{0}W_{0}(Z_{0}-X_{0})-\frac{p-1-s}{p-1}X_{0}Z_{0}%
(W_{0}-Y_{0}),\\
H=\frac{D}{(p-1)(q-1)}X_{0}Y_{0}Z_{0}W_{0}.
\end{array}
\right.
\]
From (\ref{ht}) we have $H>0,$ then $\lambda_{1}\lambda_{2}\lambda_{3}%
\lambda_{4}<0.$ There exist two real roots $\lambda_{3}<0<\lambda_{4},$ and
two roots $\lambda_{1},\lambda_{2}$, real with $\lambda_{1}\lambda_{2}>0$, or
complex$.$ Therefore there exists at least one trajectory converging to
$M_{0}$ at $\infty$ and another one at $-\infty.$ Then (\ref{cov}) and
(\ref{cow}) follow from (\ref{for}). Moreover the convergence is monotone for
$X,Y,Z,W.$\medskip
\end{proof}

\begin{remark}
\label{cet}There exist imaginary roots, namely $\operatorname{Re}\lambda
_{1}=\operatorname{Re}\lambda_{2}=0,$ if and only if there exists $c>0$ such
that $f(ci)=0,$ that means $Ec^{2}-G=0,$ and $c^{4}-Fc^{2}-H=0,$ equivalently
\[
E=G=0,\quad\text{or }EG>0\text{ and }G^{2}-EFG-E^{2}H=0.
\]
Condition \textbf{ }$E=G=0$ means that\medskip

(i) either $Z_{0}=X_{0}$ and $W_{0}=Y_{0},$ i.e.
\begin{equation}
\left(  \gamma,\xi\right)  =(\frac{N-p}{p},\frac{N-q}{q}), \label{pre}%
\end{equation}
in other words $(\delta,\mu)=(\frac{q\left(  N(p-1-s)+p(1-s+a\right)
)}{p(N-q)},\frac{p\left(  N(q-1-m)+q(1-m+b\right)  )}{q(N-p)}).\medskip$

(ii) or $(p-1-s)(q-1-m)>0$ and $\left(  \gamma,\xi\right)  $ satisfies
\begin{equation}
\left\{
\begin{array}
[c]{c}%
2N-p-q=p\gamma+q\xi,\\
\text{ }(1-\frac{m}{q-1})\xi(N-q-(q-1)\xi)=(1-\frac{s}{p-1})\gamma
(N-p-(p-1)\gamma).
\end{array}
\right.  \label{cxd}%
\end{equation}
This gives in general 0,1 or 2 values of $\left(  \gamma,\xi\right)  $. For
example, in the case $\frac{m}{q-1}=\frac{s}{p-1}\neq1,$ and $(p-2)(q-2)>0$
and $N>\frac{pq-p-q}{p+q-2}$ we find another value, different from the one of
(\ref{pre}) for $p\neq q:$
\begin{equation}
(\gamma,\xi)=(N\frac{q-2}{pq-p-q}-1,N\frac{p-2}{pq-p-q}-1). \label{pri}%
\end{equation}
Moreover the computation shows that it can exist imaginary roots with
$E,G\neq0.$\medskip
\end{remark}

In the case $p=q=2$ and $s=m$ the situation is interesting:

\begin{proposition}
\label{cent} Assume $p=q=2$ and $s=m<\frac{N}{N-2},$ with $\delta
+1-s>0,\mu+1-s>0.$ In the plane $(\delta,\mu),$ let $\mathcal{H}_{s}$ be the
hyperbola of equation
\begin{equation}
\frac{1}{\delta+1-s}+\frac{1}{\mu+1-s}=\frac{N-2}{N-(N-2)s},\text{ }
\label{defi}%
\end{equation}
\textbf{ }equivalently $\gamma+\xi=N-2.$ Then $\mathcal{H}_{s}$ is contained
in the set\textbf{ }of points $(\delta,\mu)$ for which the linearized system
at $M_{0}$ has imaginary roots, and equal when $s\leqq1$.\medskip
\end{proposition}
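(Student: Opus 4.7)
The strategy is to specialize the characteristic equation (\ref{valp}) from the proof of Proposition \ref{biz} to $p=q=2$, $s=m$, compute the coefficients $E$ and $G$ explicitly, and read off the imaginary-root condition. With $X_{0}=\gamma$, $Y_{0}=\xi$, $Z_{0}=L-\gamma$, $W_{0}=L-\xi$ where $L=N-2$, and setting $\sigma=\gamma+\xi$ and $\rho=\gamma\xi$, the formulas in the proof of Proposition \ref{biz} give directly
\begin{equation*}
E=2(L-\sigma),\qquad G=-(1-s)\bigl[\xi(L-\xi)(L-2\gamma)+\gamma(L-\gamma)(L-2\xi)\bigr].
\end{equation*}
The key algebraic observation, obtained by a symmetric expansion in $(\sigma,\rho)$, is the factorization
\begin{equation*}
G=-(1-s)(L-\sigma)(L\sigma-2\rho).
\end{equation*}
In particular $E=0$ already forces $G=0$, and the locus $\{E=G=0\}$ from Remark \ref{cet} collapses to the single equation $\sigma=N-2$.

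Next I would verify the stated equivalence $\mathcal{H}_{s}\Leftrightarrow\gamma+\xi=N-2$. The relations (\ref{gk}), specialized to $a=b=0$, $p=q=2$, $s=m$, read $\delta\xi=(1-s)\gamma+2$ and $\mu\gamma=(1-s)\xi+2$. On $\sigma=L$ they yield $\delta+1-s=((1-s)\sigma+2)/\xi=(N-(N-2)s)/\xi$ and symmetrically $\mu+1-s=(N-(N-2)s)/\gamma$; summing reciprocals produces $(\gamma+\xi)/(N-(N-2)s)=(N-2)/(N-(N-2)s)$, which is (\ref{defi}). The chain is reversible, and the hypothesis $s<N/(N-2)$ guarantees $N-(N-2)s>0$, so $\delta+1-s$ and $\mu+1-s$ stay positive on $\mathcal{H}_{s}$.

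For the inclusion $\mathcal{H}_{s}\subset\{M_{0}\text{ has imaginary roots}\}$, on $\mathcal{H}_{s}$ we have $E=G=0$, and substituting $\lambda=ic$ into $f$ reduces the characteristic equation to $c^{4}-Fc^{2}-H=0$. Since $H>0$ (from $D>0$ and $X_{0},Y_{0},Z_{0},W_{0}>0$), this quadratic in $c^{2}$ has exactly one positive root, producing a pair of purely imaginary eigenvalues $\pm ic$ at $M_{0}$.

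For the reverse inclusion when $s\le 1$, the admissibility condition (\ref{cd}) gives $0<\gamma,\xi<L$; by AM-GM, $2\rho\le\sigma^{2}/2<L\sigma$, so $L\sigma-2\rho>0$ strictly. Hence
\begin{equation*}
EG=-2(1-s)(L-\sigma)^{2}(L\sigma-2\rho)\le 0\qquad\text{whenever }s\le 1,
\end{equation*}
which eliminates the alternative branch ($EG>0$, $G^{2}-EFG-E^{2}H=0$) of Remark \ref{cet}. Only the case $E=G=0$ survives, and the factorization forces $\sigma=L$: for $s<1$ the strict positivity $L\sigma-2\rho>0$ kills the second factor of $G$, so $G=0$ requires $L-\sigma=0$; for $s=1$, $G\equiv 0$ identically and $E=0$ alone gives $\sigma=L$. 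In both subcases $(\delta,\mu)\in\mathcal{H}_{s}$. The main obstacle is the factorization identity for $G$ and the strict inequality $L\sigma-2\rho>0$, which hinges on the admissibility bounds $\gamma,\xi<L$.
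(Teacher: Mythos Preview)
Your proof is correct and follows essentially the same route as the paper. The paper's one-line argument records the identity $2G=(s-1)E\bigl[Y_{0}Z_{0}+X_{0}W_{0}\bigr]$ and concludes $GE\le 0$; since $Y_{0}Z_{0}+X_{0}W_{0}=\xi(L-\gamma)+\gamma(L-\xi)=L\sigma-2\rho$ and $E=2(L-\sigma)$, this is exactly your factorization $G=-(1-s)(L-\sigma)(L\sigma-2\rho)$. Your write-up is simply more explicit: you verify the equivalence $\mathcal{H}_{s}\Leftrightarrow\gamma+\xi=N-2$, you check directly that $H>0$ supplies the positive root of $c^{4}-Fc^{2}-H=0$, and you spell out the case split $s<1$ versus $s=1$. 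One minor remark: the positivity $L\sigma-2\rho>0$ follows more transparently from $L\sigma-2\rho=\gamma(L-\xi)+\xi(L-\gamma)>0$ under (\ref{cd}) than from the AM--GM detour, and this is implicitly what the paper uses.
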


\begin{proof}
The assumption $D>0$ imply $\delta+1-s>0$ and $\mu+1-s>0$; condition $E=G=0$
implies $s<N/(N-2)$ and reduces to condition (\ref{defi}). Moreover if
$s\leqq1,$ all the cases are covered. Indeed $2G=(s-1)E\left[  Y_{0}%
Z_{0}+X_{0}W_{0}\right]  ,$ hence $GE\leqq0.\medskip$
\end{proof}

Next we give a summary of the local existence results obtained by
linearization around the other fixed points of system $(M)$ proved in Section
\ref{app}. Recall that $t\rightarrow-\infty$ as $r\rightarrow0$ and
$t\rightarrow\infty$ as $r\rightarrow\infty.$

\begin{proposition}
\label{ereg} (Point $N_{0})$ A solution $(u,v)$ is regular if and only if the
corresponding trajectory converges to $N_{0}$ when $r\rightarrow0$. For any
$u_{0},v_{0}>0,$ there exists a unique local regular solution $(u,v)$ with
initial data $(u_{0},v_{0}).\medskip$
\end{proposition}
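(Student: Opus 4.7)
The plan is to prove the two assertions by combining an asymptotic analysis at $N_0$ with a direct local existence argument for the original system.

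First, for the equivalence regular $\Leftrightarrow$ trajectory converging to $N_0$, I would argue as follows. If $(u,v)$ is regular with $u(0)=u_0>0,\ v(0)=v_0>0,\ u'(0)=v'(0)=0$, I integrate the radial equation twice from $0$:
\[
-r^{N-1}|u'|^{p-2}u'=\int_{0}^{r}t^{N-1+a}u^{s}v^{\delta}\,dt=\frac{u_{0}^{s}v_{0}^{\delta}}{N+a}\,r^{N+a}(1+o(1)),
\]
so that $-u'(r)\sim\bigl(u_{0}^{s}v_{0}^{\delta}/(N+a)\bigr)^{1/(p-1)}r^{(1+a)/(p-1)}$, and analogously for $v'$. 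Plugging these asymptotics into (\ref{eto}) gives $X(t),Y(t)\to 0$ and $Z(t)\to N+a,\ W(t)\to N+b$ as $r=e^{t}\to 0$, i.e., the trajectory converges to $N_0$. Conversely, if a trajectory in $\mathcal R$ converges to $N_0$ as $t\to-\infty$, then using (\ref{for}) together with $X,Y\to 0$, $Z\to N+a$, $W\to N+b$, and the identity $(p+a)(q-1-m)+(q+b)\delta=D\gamma$, the exponents of $r$ cancel and $u,v$ acquire positive finite limits at $0$; from $u'=-Xu/r$ and $v'=-Yv/r$ together with the linear rates $X\sim C_{1}r^{(p+a)/(p-1)}$, $Y\sim C_{2}r^{(q+b)/(q-1)}$ below, one concludes $u',v'\to 0$, proving regularity.

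The linear rates come from the linearization of $(M)$ at $N_0$, which has eigenvalues $(p+a)/(p-1)>0,\ (q+b)/(q-1)>0,\ -(N+a)<0,\ -(N+b)<0$, since $\min(p+a,q+b)>0$ by (\ref{ht}). Thus $N_0$ is a hyperbolic saddle whose unstable manifold $W^{u}(N_{0})$ is two-dimensional, with eigenvectors in the $X$ and $Y$ directions; along the unstable directions $X,Y$ are positive and small, so the trajectories near $N_0$ in $W^u(N_0)$ lie in $\mathcal R$ and are admissible.

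For the local existence and uniqueness with prescribed $(u_{0},v_{0})$, I would work directly on the integrated system rather than on $(M)$. Defining the operator
\[
\mathcal{T}(u,v)(r)=\left(u_{0}-\int_{0}^{r}\!\!\Bigl(s^{1-N}\!\!\int_{0}^{s}\!t^{N-1+a}u^{s}v^{\delta}\,dt\Bigr)^{1/(p-1)}\!ds,\ v_{0}-\int_{0}^{r}\!\!\Bigl(s^{1-N}\!\!\int_{0}^{s}\!t^{N-1+b}u^{\mu}v^{m}\,dt\Bigr)^{1/(q-1)}\!ds\right),
\]
the conditions $p+a>0,\ q+b>0$ ensure the inner integrals behave like $r^{N+a}$ and $r^{N+b}$, making $\mathcal{T}$ map a small ball around the constant pair $(u_{0},v_{0})$ in $C([0,R])^{2}$ into itself and a contraction for $R$ sufficiently small. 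This gives the unique local regular solution with prescribed positive initial data; applying the first part then shows that the corresponding trajectory tends to $N_0$ as $r\to 0$, which also settles the "if" direction of the equivalence via the existence of a trajectory in $W^{u}(N_{0})$ for any positive pair $(u_{0},v_{0})$.

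The main obstacle is the matching between the two-dimensional unstable manifold at $N_{0}$ and the two-parameter family of initial data $(u_{0},v_{0})$: since each trajectory is scaling-invariant (the scaling $(u,v)\mapsto(\theta^{\gamma}u(\theta r),\theta^{\xi}v(\theta r))$ acts on $(X,Y,Z,W)$ as a time-shift), the 1-parameter family of distinct trajectories in $W^{u}(N_{0})$ combined with the 1-parameter scaling yields exactly the 2-parameter family of regular solutions. The integral-equation approach above bypasses this counting entirely and gives both assertions simultaneously.
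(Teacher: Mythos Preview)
Your argument for the equivalence is essentially the same as the paper's: integrate the radial equation to read off the asymptotics of $X,Y,Z,W$ at $N_0$, and in the other direction linearize at $N_0$ to obtain the rates $X\sim\kappa e^{\lambda_1 t}$, $Y\sim\ell e^{\lambda_2 t}$, and then recover finite positive limits for $u,v$ from (\ref{for}) and (\ref{ga}).

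Where you diverge from the paper is the existence and uniqueness for prescribed $(u_0,v_0)$. The paper deliberately stays inside system $(M)$: on the two-dimensional unstable manifold $\mathcal V_u$ it reduces to $X_t=X(\lambda_1+F(X,Y))$, $Y_t=Y(\lambda_2+G(X,Y))$, then desingularizes by writing $X=e^{\lambda_1 t}(\kappa+x)$, $Y=e^{\lambda_2 t}(\ell+y)$ and $\rho=e^{\lambda_1 t}$, obtaining a regular initial-value problem at $\rho=0$ whose unique solution gives the unique trajectory with prescribed $(\kappa,\ell)$, hence with prescribed $(u_0,v_0)$. You instead run a contraction on the integral form of the original system, which is the classical route the paper explicitly avoids (see the comment in Section~\ref{local} that the results ``are not based on the fixed point method, quite hard in general''). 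Your approach is perfectly legitimate and has the virtue of yielding regularity and uniqueness in one stroke without invoking invariant-manifold theory; just be aware that for $p>2$ (or $q>2$) the map $I\mapsto I^{1/(p-1)}$ is not Lipschitz at $0$, so the contraction should be set up after factoring $I(s)=s^{1+a}g(s)$ with $g$ bounded away from zero near the constant data, which restores a uniform Lipschitz constant. The paper's route buys consistency with the dynamical viewpoint used throughout and sidesteps that technicality automatically, since $F,G$ are smooth on $\mathcal V_u$; your route buys a self-contained argument that does not require identifying $\mathcal V_u$ or the parametrization $(\kappa,\ell)\leftrightarrow(u_0,v_0)$.
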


\begin{proposition}
\label{cao} (Point $A_{0})$ If $s\frac{N-p}{p-1}+\delta\frac{N-q}{q-1}>N+a$
and $\mu\frac{N-p}{p-1}+m\frac{N-q}{q-1}>N+b,$ there exist (admissible)
trajectories converging to $A_{0}$ when $r\rightarrow\infty$. If $s\frac
{N-p}{p-1}+\delta\frac{N-q}{q-1}<N+a$ and $\mu\frac{N-p}{p-1}+m\frac{N-q}%
{q-1}<N+b,$ the same happens when $r\rightarrow0$. In any case
\begin{equation}
\lim r^{\frac{N-p}{p-1}}u=\alpha>0,\quad\lim r^{\frac{N-q}{q-1}}v=\beta>0.
\label{oup}%
\end{equation}
If $s\frac{N-p}{p-1}+\delta\frac{N-q}{q-1}<N+a$ or $\mu\frac{N-p}{p-1}%
+m\frac{N-q}{q-1}<N+b,$ there exists no trajectory converging when
$r\rightarrow\infty;$ if $s\frac{N-p}{p-1}+\delta\frac{N-q}{q-1}>N+a$ or
$\mu\frac{N-p}{p-1}+m\frac{N-q}{q-1}>N+b,$ there exists no trajectory
converging when $r\rightarrow0.$\medskip
\end{proposition}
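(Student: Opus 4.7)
The plan is to linearize system $(M)$ at $A_0=(\tfrac{N-p}{p-1},\tfrac{N-q}{q-1},0,0)$ and read off the qualitative picture from the spectrum of the Jacobian. Because $Z_0=W_0=0$, the partial derivatives of $Z_t$ and $W_t$ with respect to $X$ and $Y$ vanish at $A_0$, so the Jacobian is upper triangular with diagonal entries
\[
X_0=\tfrac{N-p}{p-1},\quad Y_0=\tfrac{N-q}{q-1},\quad \alpha_1:=N+a-s\tfrac{N-p}{p-1}-\delta\tfrac{N-q}{q-1},\quad \alpha_2:=N+b-\mu\tfrac{N-p}{p-1}-m\tfrac{N-q}{q-1}.
\]
By \eqref{ht} the first two eigenvalues are strictly positive, so the sign pattern is entirely governed by $\alpha_1,\alpha_2$, and the two hypotheses of the proposition correspond exactly to $\alpha_1,\alpha_2<0$ and $\alpha_1,\alpha_2>0$ respectively.

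For the existence halves I invoke the stable/unstable manifold theorem. A direct computation shows that, under the generic assumption $\alpha_i\notin\{X_0,Y_0\}$, the matrix is diagonalizable with eigenvectors of the form $(c_1,0,1,0)$ and $(0,c_2,0,1)$ attached to $\alpha_1$ and $\alpha_2$. When $\alpha_1,\alpha_2<0$, the stable manifold at $A_0$ is two-dimensional and its tangent plane projects isomorphically onto the $(Z,W)$-plane, so a whole open wedge of admissible trajectories with $Z,W>0$ converges to $A_0$ as $t\to+\infty$, i.e.\ as $r\to\infty$. The regime $\alpha_1,\alpha_2>0$ is the mirror image: $A_0$ is a source, its four-dimensional unstable manifold meets the admissible orthant, and the corresponding trajectories converge to $A_0$ as $r\to 0$. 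In both cases the convergence $X\to X_0$, $Y\to Y_0$ is exponential; integrating $(\ln u)_t=-X$ and $(\ln v)_t=-Y$ then produces the finite positive limits \eqref{oup}.

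The nonexistence halves hinge on the Kolmogorov structure: the hyperplanes $\{Z=0\}$ and $\{W=0\}$ are invariant under $(M)$. If $\alpha_1>0$, then for $Z>0$ small in a neighbourhood of $A_0$ one has $Z_t\simeq\alpha_1 Z>0$, so $Z$ is strictly increasing there and cannot decay to $0$ forward in time; equivalently the stable manifold of $A_0$ is trapped inside the wall $\{Z=0\}$ and is non-admissible. The same argument with $W$ handles $\alpha_2>0$. Reversing time gives the twin statement at $r=0$: if $\alpha_i<0$ for some $i$, then $Z$ (resp.\ $W$) decays forward in time and consequently cannot tend to $0$ backward in time while remaining positive.

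The one delicate point is verifying that in the ``both negative'' and ``both positive'' regimes the stable/unstable manifold truly meets the open admissible orthant $\{X,Y,Z,W>0\}$ rather than being confined to a coordinate wall. This is settled by the shape of the eigenvectors above, whose two new directions have strictly positive $Z$- and $W$-components respectively, so the admissible slice of the manifold is open and nonempty close to $A_0$. The remainder is routine linear algebra together with the single scalar integration that yields $\alpha,\beta$ in \eqref{oup}.
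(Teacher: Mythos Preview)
Your proof is correct and follows essentially the same approach as the paper: linearize at $A_0$, read off the four eigenvalues $\tfrac{N-p}{p-1},\tfrac{N-q}{q-1},\alpha_1,\alpha_2$, and use the stable/unstable manifold theorem together with the invariance of the coordinate hyperplanes $\{Z=0\},\{W=0\}$. The paper argues admissibility via dimension counting ($\mathcal V_s\cap\{Z=0\}$ has dimension $1$, etc.) and derives \eqref{oup} from the explicit formula \eqref{for}, whereas you compute the eigenvectors and integrate $(\ln u)_t=-X$ directly; these are cosmetic differences, and your ``generic assumption'' $\alpha_i\notin\{X_0,Y_0\}$ is harmless since it is automatic when $\alpha_1,\alpha_2<0$ and irrelevant when $A_0$ is a source.
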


\begin{proposition}
\label{cpo} (Point $P_{0})$ 1) Assume that $q>m+1$ and $q+b<\frac{N-p}{p-1}%
\mu<N+b-m\frac{N-q}{q-1}.$ If $\gamma<\frac{N-p}{p-1}$ there exist
trajectories converging to $P_{0}$ when $r\rightarrow\infty$ (and not when
$r\rightarrow0).$ If $\gamma>\frac{N-p}{p-1}$ the same happens when
$r\rightarrow0$ (and not when $r\rightarrow\infty).$\medskip

2) Assume that $q<m+1$ and $q+b>\frac{N-p}{p-1}\mu>N+b-m\frac{N-q}{q-1}$ and
$q\frac{N-p}{p-1}\mu+m(N-q)\neq N(q-1)+(b+1)q.$ If $\gamma<\frac{N-p}{p-1}$
there exist trajectories converging to $P_{0}$ when $r\rightarrow0$ (and not
when $r\rightarrow\infty).$ If $\gamma>\frac{N-p}{p-1}$ there exist
trajectories converging when $r\rightarrow$ $\infty$ (and not when when
$r\rightarrow$ $0).$\medskip

In any case, setting $\kappa=\frac{1}{q-1-m}(\frac{N-p}{p-1}\mu-(q+b)),$ there
holds
\begin{equation}
\lim r^{\frac{N-p}{p-1}}u=\alpha>0,\quad\lim r^{\kappa}v=\beta>0. \label{aup}%
\end{equation}

\end{proposition}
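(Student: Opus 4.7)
The plan is to linearise system $(M)$ at $P_{0}$, taking advantage of the fact that $Z_{0}=0$ forces a block-triangular Jacobian; because $Z$ enters $(M)$ multiplicatively (Kolmogorov structure), the eigenvector of the ``transverse'' eigenvalue, which I call $\lambda_{Z}$, is the only one with a non-vanishing $Z$-component, and is therefore the only direction along which a trajectory converging to $P_{0}$ can remain admissible ($Z>0$). The statement thus reduces to (i) identifying $\lambda_{Z}$ and expressing its sign in terms of $\gamma$, and (ii) checking hyperbolicity of $P_{0}$ in each case.\medskip

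Under the hypotheses the coordinates $Y_{0}=\kappa$ and $W_{0}=N-q-(q-1)\kappa$ are positive, as follows from the bounds on $\frac{N-p}{p-1}\mu$ (the reversal of inequalities between parts 1) and 2) being induced by the sign of $q-1-m$), so $P_{0}\in\bar{\mathcal R}$. A direct computation of the Jacobian using $Z_{0}=0$ yields the factorisation
\[
\det(J-\lambda I)=(\lambda_{Z}-\lambda)\bigl(\tfrac{N-p}{p-1}-\lambda\bigr)\Bigl[\lambda^{2}+(W_{0}-\kappa)\lambda-\tfrac{(q-1-m)\kappa W_{0}}{q-1}\Bigr],
\]
with $\lambda_{Z}=N+a-s\frac{N-p}{p-1}-\delta\kappa$. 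Inspection of the eigenvector equations confirms that among the four eigenvalues, only the one associated to $\lambda_{Z}$ has a non-zero third component. The key algebraic identity is
\[
\lambda_{Z}=\frac{D}{q-1-m}\Bigl(\gamma-\frac{N-p}{p-1}\Bigr),
\]
which I would derive by substituting the expression for $\kappa$ and invoking the relation $\gamma D=(q-1-m)(p+a)+\delta(q+b)$ defining $\gamma$ in (\ref{ga}). Since $D>0$, the sign of $\lambda_{Z}$ agrees with that of $\gamma-\frac{N-p}{p-1}$ in Case 1 and is opposite in Case 2.\medskip

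Admissible convergence at $r\to\infty$ (resp.\ $r\to 0$) requires a stable (resp.\ unstable) eigenvector with non-vanishing $Z$-component, hence the $\lambda_{Z}$-direction. In Case 1, $\lambda_{3}\lambda_{4}=-\frac{(q-1-m)\kappa W_{0}}{q-1}<0$, so $\lambda_{3},\lambda_{4}$ are real of opposite sign, $P_{0}$ is automatically hyperbolic, and the claim follows from the sign of $\lambda_{Z}$ given by Step 2. In Case 2, $\lambda_{3}\lambda_{4}>0$, so $\lambda_{3,4}$ could be purely imaginary; one computes $\lambda_{3}+\lambda_{4}=q\kappa-(N-q)$, and after clearing denominators
\[
(q-1-m)\bigl(q\kappa-(N-q)\bigr)=q\tfrac{N-p}{p-1}\mu+m(N-q)-N(q-1)-(b+1)q,
\]
so the extra assumption of part 2) is precisely the hyperbolicity condition $\lambda_{3}+\lambda_{4}\neq 0$. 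The stable/unstable manifold theorem then produces the admissible trajectory, and the asymptotics (\ref{aup}) follow from the exponential convergence $X(t)\to\frac{N-p}{p-1}$, $Y(t)\to\kappa$ combined with $(\ln u)^{\prime}=-X/r$, $(\ln v)^{\prime}=-Y/r$. The main obstacle is the identity linking $\lambda_{Z}$ to $\gamma-\frac{N-p}{p-1}$: it is the bridge that converts the hypothesis on $\gamma$ into a sign of an eigenvalue; everything else is standard linearisation plus tracking of the unique admissible direction.
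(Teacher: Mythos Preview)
Your proposal is correct and follows essentially the same route as the paper: linearise at $P_{0}$, exploit the decoupling of the $Z$-equation (giving the eigenvalue you call $\lambda_{Z}$, the paper's $\lambda_{3}$), establish the identity $\lambda_{Z}=\frac{D}{q-1-m}(\gamma-\frac{N-p}{p-1})$, and read off admissibility from whether the $\lambda_{Z}$-direction lies in the stable or unstable manifold. The paper phrases the admissibility step via a dimension count of $\mathcal{V}_{s}$ versus $\mathcal{V}_{s}\cap\{Z=0\}$ (and splits Case~2 into sub-cases according to the sign of $\operatorname{Re}\lambda_{2}$), whereas you argue directly that the $\lambda_{Z}$-eigenvector is the unique one with non-trivial $Z$-component; these are equivalent because $\{Z=0\}$ is invariant. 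Your verification that the extra hypothesis in part~2) is exactly $Y_{*}\neq W_{*}$ (hyperbolicity) and your derivation of (\ref{aup}) by integrating $(\ln u)_{t}=-X$ are both fine; the paper obtains (\ref{aup}) instead via formula (\ref{for}), but this is a cosmetic difference.
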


\begin{remark}
This result improves the results of existence obtained by the fixed point
theorem in \cite{R} in the case of system $(RP)$ with $p=q=2,a=0,N=3,$
$2s+m\neq3.$ The proof is quite simpler..\medskip
\end{remark}

\begin{proposition}
\label{mis} (Point $I_{0})$ If $\frac{N-p}{p-1}s>N+a$ and $\frac{N-q}{q-1}%
\mu>N+b,$ there exist trajectories converging to $I_{0}$ when $r\rightarrow
\infty,$ and then
\begin{equation}
\lim_{r\rightarrow\infty}r^{\frac{N-p}{p-1}}u=\beta,\quad\lim_{r\rightarrow
\infty}v=\alpha>0. \label{lup}%
\end{equation}
For any $s,m\geqq0,$ there is no trajectory converging when $r\rightarrow$
$0.\medskip$
\end{proposition}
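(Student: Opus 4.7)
The plan is to linearize system $(M)$ at $I_{0}=(\frac{N-p}{p-1},0,0,0)$ and read off the local behaviour from the eigenvalues, together with the invariance of the coordinate hyperplanes. Writing $X=X_{0}+\tilde X$ with $X_{0}=\frac{N-p}{p-1}$ and keeping $Y,Z,W$ small, the Jacobian at $I_{0}$ is upper triangular with diagonal entries
\[
\lambda_{1}=\tfrac{N-p}{p-1},\quad \lambda_{2}=-\tfrac{N-q}{q-1},\quad \lambda_{3}=N+a-s\tfrac{N-p}{p-1},\quad \lambda_{4}=N+b-\mu\tfrac{N-p}{p-1},
\]
so the eigenvalues are explicit. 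The eigenvectors for $\lambda_{1},\lambda_{2},\lambda_{4}$ are the coordinate vectors $e_{1},e_{2},e_{4}$, while the eigenvector $v_{3}$ associated with $\lambda_{3}$ lies in the $(X,Z)$-plane; in particular every eigenvector distinct from $e_{2}$ has vanishing $Y$-component.

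For the first assertion, the hypotheses force $\lambda_{3},\lambda_{4}<0$, and $\lambda_{2}<0$ always, so $I_{0}$ is hyperbolic with a three-dimensional stable manifold $W^{s}$ tangent at $I_0$ to $\mathrm{span}(e_{2},v_{3},e_{4})$. Small positive combinations of these three eigenvectors yield initial data with $Y,Z,W>0$ and $X$ close to $X_{0}>0$, so $W^{s}$ contains an open cone of admissible trajectories. Convergence on $W^{s}$ is exponential in $t$, hence $X(t)-X_{0}$, $Y(t)$, $Z(t)$ and $W(t)$ are integrable at $+\infty$. Integrating $(\ln u)_{t}=-X$ yields $\ln u=-\tfrac{N-p}{p-1}\,t+O(1)$, i.e.\ $r^{(N-p)/(p-1)}u\to\beta>0$, and integrating $(\ln v)_{t}=-Y$ against the integrable $Y$ gives $v\to\alpha>0$. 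This proves (\ref{lup}).

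For the second assertion, whatever the signs of $\lambda_{3},\lambda_{4}$ each positive eigenvalue has an eigenvector with vanishing $Y$-component, so the unstable manifold $W^{u}$ of $I_{0}$ is tangent at $I_{0}$ to a subspace of $\{Y=0\}$. Since $\{Y=0\}$ is flow-invariant (the Kolmogorov form of $(M)$ gives $Y_{t}=Y[\cdots]$) and $W^{u}$ is the unique invariant manifold with that tangent space, we have $W^{u}\subset\{Y=0\}$. No point of $\{Y=0\}$ is admissible, hence no admissible trajectory converges to $I_{0}$ as $r\to 0$, regardless of $s,m\geq 0$.

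The main subtlety is the admissibility check in part one: one must guarantee that trajectories in $W^{s}$ stay in the open region $\mathcal{R}=\{X,Y,Z,W>0\}$ rather than drifting onto a boundary hyperplane. This is handled by representing $W^{s}$ locally as a $C^{1}$ graph over $\mathrm{span}(e_{2},v_{3},e_{4})$; because that subspace contains an open cone of vectors with strictly positive $Y,Z,W$ coordinates, the corresponding piece of $W^{s}$ supplies an open family of admissible solutions, on which the exponential convergence rates above yield the asymptotics (\ref{lup}) through the formulas (\ref{eto}).
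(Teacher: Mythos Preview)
Your proof is correct and follows essentially the same route as the paper: linearize $(M)$ at $I_{0}$, read off the four eigenvalues, and use the invariance of the coordinate hyperplanes together with the stable/unstable manifold theorem to decide which trajectories are admissible. The only minor difference is that you recover the asymptotics of $u$ and $v$ by directly integrating $(\ln u)_{t}=-X$ and $(\ln v)_{t}=-Y$ using the exponential decay on $W^{s}$, whereas the paper substitutes the exponential rates of $X,Y,Z,W$ into formula~(\ref{for}); both arguments give (\ref{lup}).
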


\begin{proposition}
\label{gzero} (Point $G_{0})$ Suppose $\frac{N-p}{p-1}\mu<N+b.$ If
$q+b<\frac{N-p}{p-1}\mu$ and $N+a<\frac{N-p}{p-1}s,$ there exist trajectories
converging to $G_{0}$ when $r\rightarrow$ $\infty.$ If $\frac{N-p}{p-1}%
\mu<q+b$ and $\frac{N-p}{p-1}s<N+a,$ the same happens when $r\rightarrow0$. In
any case
\begin{equation}
\lim r^{\frac{N-p}{p-1}}u=\beta,\quad\lim v=\alpha>0. \label{nup}%
\end{equation}
\medskip
\end{proposition}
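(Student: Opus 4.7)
The plan is to linearise system $(M)$ at the fixed point $G_{0}=\left(\tfrac{N-p}{p-1},0,0,N+b-\tfrac{N-p}{p-1}\mu\right)$, extract the spectrum, and then track which invariant subspaces produce admissible trajectories (those with $X,Y,Z,W>0$).

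Setting $X=X_{0}+\tilde X,\;Y=\tilde Y,\;Z=\tilde Z,\;W=W_{0}+\tilde W$ with $X_{0}=\tfrac{N-p}{p-1}$ and $W_{0}=N+b-\tfrac{N-p}{p-1}\mu$, the Jacobian of $(M)$ at $G_{0}$ is block triangular because $Y_{0}=Z_{0}=0$ only appear multiplicatively in the $Y$ and $Z$ equations. A direct computation then gives four real eigenvalues on the diagonal:
\[
\lambda_{1}=X_{0}=\tfrac{N-p}{p-1}>0,\qquad
\lambda_{2}=\tfrac{q+b-\frac{N-p}{p-1}\mu}{q-1},\qquad
\lambda_{3}=N+a-s\tfrac{N-p}{p-1},\qquad
\lambda_{4}=-W_{0}<0,
\]
where $W_{0}>0$ by the standing hypothesis $\tfrac{N-p}{p-1}\mu<N+b$. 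The signs of $\lambda_{2},\lambda_{3}$ are exactly what the two sets of parameter inequalities in the statement control: $\lambda_{2},\lambda_{3}<0$ in case (i) and $\lambda_{2},\lambda_{3}>0$ in case (ii).

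The next step is to determine which trajectories in the invariant manifolds are admissible. Computing eigenvectors, one checks that the $\lambda_{1}$-eigenvector has $\tilde Y=\tilde Z=0$, the $\lambda_{4}$-eigenvector lies along the $W$-axis, the $\lambda_{2}$-eigenvector has nonzero $\tilde Y$-component but $\tilde Z=0$, and the $\lambda_{3}$-eigenvector has nonzero $\tilde Z$-component but $\tilde Y=0$. Since admissibility forces $Y>0$ and $Z>0$ near $G_{0}$, the $\lambda_{2}$- and $\lambda_{3}$-directions must both enter the combination (with the correct sign chosen for the free coefficient, and $c_{2}>0$); the $\lambda_{4}$-direction may then be added freely, and the $\lambda_{1}$-direction is ruled out when the corresponding mode is unstable for that time direction. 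In case (i), the stable manifold has dimension three and contains a two-parameter family of admissible trajectories converging to $G_{0}$ as $t\to+\infty$; in case (ii), the unstable manifold contains such a family converging as $t\to-\infty$, using the strictly positive modes $\lambda_{1},\lambda_{2},\lambda_{3}$.

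Finally, I recover the asymptotic behaviour (\ref{nup}) directly from the exponential convergence. Since $X(t)\to X_{0}$ and $d\ln u/dt=-X$, integrating gives $\ln u(t)=-X_{0}t+O(1)$, hence $r^{(N-p)/(p-1)}u\to\beta>0$. Similarly, $Y(t)\to 0$ at rate $e^{\lambda_{2}t}$ so $\int Y\,dt$ converges at the relevant endpoint and $d\ln v/dt=-Y$ integrates to a finite limit, yielding $v\to\alpha>0$. The one delicate point I expect to be the main obstacle is the admissibility check: several eigenvectors have zero components precisely along the invariant coordinate hyperplanes, so one must verify that the combination of modes forced by admissibility is genuinely compatible with the signs of $\lambda_{2},\lambda_{3}$ in each case (and, in borderline resonant situations, that no small denominator spoils the argument), rather than just counting dimensions of the stable/unstable manifold.
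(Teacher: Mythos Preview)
Your proposal is correct and follows essentially the same approach as the paper: linearise $(M)$ at $G_{0}$, read off the four eigenvalues $\lambda_{1}=\tfrac{N-p}{p-1}>0$, $\lambda_{2}=\tfrac{1}{q-1}\bigl(q+b-\tfrac{N-p}{p-1}\mu\bigr)$, $\lambda_{3}=N+a-s\tfrac{N-p}{p-1}$, $\lambda_{4}=-W_{0}<0$, and then use the invariant hyperplanes $\{Y=0\}$ and $\{Z=0\}$ to see that admissible trajectories in the stable (resp.\ unstable) manifold exist precisely when $\lambda_{2},\lambda_{3}<0$ (resp.\ $>0$). The only cosmetic difference is that the paper obtains the limits (\ref{nup}) by substituting the exponential rates of $X,Y,Z,W$ into the explicit formula (\ref{for}) for $u,v$, whereas you integrate $d\ln u/dt=-X$ and $d\ln v/dt=-Y$ directly; both arguments are equivalent, and your worry about resonances is not an issue here since the statement assumes strict inequalities on the parameters.
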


\begin{proposition}
\label{mes} (Point $C_{0})$ Suppose $N+b<\frac{N-q}{q-1}m$ (hence $q<m+1)$
with $m\neq\frac{N(q-1)+(b+1)q}{N-q},$ and $\delta>\frac{(N+a)(m+1-q)}{q+b}$.
Then there exist trajectories converging to $C_{0}$ when $r\rightarrow$
$\infty$ (and not when $r\rightarrow$ $0),$ and then
\begin{equation}
\lim u=\alpha>0,\quad\lim r^{k}v=\beta, \label{rup}%
\end{equation}
where $k=\frac{q+b}{m+1-q}.$ $\medskip$
\end{proposition}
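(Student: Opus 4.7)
The plan is to linearise $(M)$ at the boundary fixed point $C_0$ and apply the stable/unstable manifold theorem, making crucial use of the fact that the hyperplanes $\{X=0\}$ and $\{Z=0\}$ are invariant under $(M)$. First I would check the sign conditions: since $q+b>0$, the hypothesis $N+b<\frac{N-q}{q-1}m$ forces $q<m+1$ (otherwise $\frac{N-q}{q-1}m\leq N-q\leq N+b$, absurd), so that
\[
Y_0=\frac{q+b}{m+1-q}>0,\qquad W_0=\frac{m(N-q)-(N+b)(q-1)}{m+1-q}>0,
\]
placing $C_0$ on $\partial\mathcal{R}$ with $X_0=Z_0=0$.

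Because $X_0=Z_0=0$, the Jacobian at $C_0$ has a sparse structure and its characteristic polynomial factors as $(\lambda-\lambda_1)(\lambda-\lambda_2)\bigl(\lambda^2-\tau\lambda+D_{YW}\bigr)$, with
\[
\lambda_1=-\frac{N-p}{p-1}<0,\qquad \lambda_2=N+a-\frac{\delta(q+b)}{m+1-q}<0,\qquad D_{YW}=\frac{Y_0W_0(m+1-q)}{q-1}>0,
\]
and $\tau=Y_0-W_0$. The negativity of $\lambda_2$ is exactly the hypothesis $\delta>(N+a)(m+1-q)/(q+b)$; a direct computation shows $\tau=0$ iff $m=\frac{N(q-1)+(b+1)q}{N-q}$, so $\tau\neq 0$ by assumption and the two remaining roots $\lambda_3,\lambda_4$ have real parts of common nonzero sign equal to $\mathrm{sgn}(\tau)$.

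The key observation is that $\Pi=\{X=Z=0\}$ is an invariant plane of $(M)$ passing through $C_0$, on which the reduced $(Y,W)$-flow has $C_0$ as a fixed point with linearisation given precisely by the $(\lambda_3,\lambda_4)$-block above. By uniqueness in the invariant-manifold theorem, the local invariant manifold of $C_0$ tangent to the $(\lambda_3,\lambda_4)$-eigenspace coincides with $\Pi$, and therefore consists entirely of non-admissible trajectories. Consequently every admissible trajectory approaching $C_0$ must do so along the $(\lambda_1,\lambda_2)$-stable directions, which are transverse to $\Pi$ and lie in the $(X,Z)$-subspace: since both eigenvalues are negative, convergence can only occur as $t\to+\infty$ (that is, $r\to\infty$), and choosing initial data with $\tilde X>0$, $\tilde Z>0$ small yields a two-parameter family of admissible trajectories with $X,Z\searrow 0^+$ and $(Y,W)\to (Y_0,W_0)$. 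No admissible trajectory converges at $r\to 0$: if $\tau<0$ then $C_0$ is a sink with empty unstable manifold, while if $\tau>0$ the two-dimensional unstable manifold coincides with $\Pi$, hence is non-admissible.

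Finally, the asymptotics (\ref{rup}) follow by integrating $-ru'/u=X(t)\sim c_1 r^{\lambda_1}$, which gives $\lim u=\alpha>0$ because $(N-p)/(p-1)>0$ makes $X/r$ integrable at infinity, and $-rv'/v=Y(t)\to Y_0=k=(q+b)/(m+1-q)$, which gives $v\sim \beta r^{-k}$. The main obstacle is the saddle case $\tau>0$: one must correctly identify the full two-dimensional unstable manifold as $\Pi$ itself, and it is precisely the Kolmogorov structure (invariance of $\{X=0\}$ and $\{Z=0\}$) that makes this identification transparent.
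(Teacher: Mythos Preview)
Your approach is essentially the same as the paper's: linearise $(M)$ at $C_0$, read off the four eigenvalues, and exploit the invariance of the coordinate hyperplanes to separate admissible from non-admissible trajectories. Your identification of $\Pi=\{X=Z=0\}$ as the invariant manifold carrying the $(Y,W)$-block is a clean way to handle what the paper does by intersecting $\mathcal V_s$ separately with $\{X=0\}$ and $\{Z=0\}$ and counting dimensions; and your argument for non-convergence at $r\to 0$ (sink versus $\mathcal V_u=\Pi$) is equivalent to the paper's one-line observation that $\lambda_1<0$ forces $\mathcal V_u\subset\{X=0\}$.

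There is one technical inaccuracy you should fix. You write that the $(\lambda_1,\lambda_2)$-eigendirections ``lie in the $(X,Z)$-subspace''. They do not: the linearised $\tilde W$-equation is $\tilde W_t=\bar W\,[-\mu X-m\tilde Y-\tilde W]$, so the coupling term $-\bar W\mu X$ forces the $\lambda_1$-eigenvector to have nonzero $\tilde Y,\tilde W$ components (and similarly for the $\lambda_2$-eigenvector). What is true, and what you actually use, is that the $\lambda_1$-eigenvector has nonzero $X$-component and the $\lambda_2$-eigenvector has nonzero $Z$-component, so their span is \emph{transverse} to $\Pi$. That transversality is exactly what lets you parametrise $\mathcal V_s$ locally by $(X,Z)$ and pick $X,Z>0$ small to land in $\mathcal R$; the stronger claim is false but unnecessary. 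With this correction your existence argument and the integration giving $u\to\alpha>0$, $r^{k}v\to\beta>0$ go through as written.
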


\begin{proposition}
\label{mas} (Point $R_{0})$ Assume that $N+b<\frac{N-q}{q-1}m$ (hence $q<m+1)$
with $m\neq\frac{N(q-1)+b+bq}{N-q},$ and $\delta<\frac{(N+a)(m+1-q)}{q+b}.$ If
$\frac{(p+a)(m+1-q)}{q+b}<\delta,$ there exist trajectories converging to
$R_{0}$ when $r\rightarrow\infty$ (and not when $r\rightarrow0)$. If
$\delta<\frac{(p+a)(m+1-q)}{q+b},$ there exist trajectories converging when
$r\rightarrow0$ (and not when $r\rightarrow\infty)$, and then (\ref{rup})
holds again.\medskip
\end{proposition}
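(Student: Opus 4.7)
The plan is to follow the same linearization approach used for the other fixed points in this section. First I verify that under the standing hypotheses $Y_0,Z_0,W_0$ are all strictly positive: $Y_0=\tfrac{q+b}{m+1-q}>0$, the assumption $N+b<\tfrac{N-q}{q-1}m$ gives $W_0=\tfrac{m(N-q)-(N+b)(q-1)}{m+1-q}>0$, and the hypothesis $\delta<\tfrac{(N+a)(m+1-q)}{q+b}$ is precisely what forces $Z_0=N+a-\delta\tfrac{q+b}{m+1-q}>0$. Thus $R_0$ lies on the face $X=0$ of $\bar{\mathcal R}$.

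Next I compute the Jacobian $J$ of $(M)$ at $R_0$. Because the plane $X=0$ is invariant and $X_0=0$, the first row of $J$ is $(\lambda_1,0,0,0)$ with
\[
\lambda_1 \;=\; \frac{Z_0-(N-p)}{p-1} \;=\; \frac{(p+a)(m+1-q)-\delta(q+b)}{(p-1)(m+1-q)}.
\]
So $\lambda_1$ is an eigenvalue and the remaining three come from the $3\times 3$ minor in $(Y,Z,W)$. A direct expansion factors the corresponding characteristic polynomial as
\[
(\lambda+Z_0)\Bigl[\lambda^{2}-(Y_0-W_0)\lambda+Y_0W_0\,\tfrac{m+1-q}{q-1}\Bigr],
\]
yielding an eigenvalue $-Z_0<0$ together with the two roots of the bracketed quadratic, whose product is strictly positive (same sign) since $m+1-q>0$, $q>1$, and $Y_0,W_0>0$; the non-degeneracy assumption on $m$ forces $Y_0-W_0\neq 0$, so no purely imaginary pair arises and $R_0$ is hyperbolic.

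The case split in the statement reflects the sign of $\lambda_1$: $\lambda_1<0$ iff $\delta>\tfrac{(p+a)(m+1-q)}{q+b}$, and $\lambda_1>0$ iff $\delta<\tfrac{(p+a)(m+1-q)}{q+b}$. Admissible trajectories must leave the invariant plane $X=0$, so they must carry a nonzero component along the unique eigendirection of $J$ transverse to that plane. Because the first row of $J-\lambda I$ forces $v_1=0$ whenever $\lambda\neq\lambda_1$, this transverse direction is exactly the $\lambda_1$-eigenspace. By the stable/unstable manifold theorem, when $\lambda_1<0$ this direction lies in the stable manifold and produces admissible trajectories converging to $R_0$ as $r\to\infty$; when $\lambda_1>0$ it lies in the unstable manifold and produces admissible trajectories converging as $r\to 0$; in each case no admissible trajectory converges at the opposite end.

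The asymptotics (\ref{rup}) then follow from the exponential rate at the fixed point: since $X(t)\to 0$ exponentially, $\ln u=-\int X\,dt$ converges to a finite limit, so $u\to\alpha>0$; since $Y(t)\to k=\tfrac{q+b}{m+1-q}$ exponentially, $\ln v=-kt+O(1)=-k\ln r+O(1)$, giving $r^k v\to\beta>0$. The main obstacle is the algebraic verification of the clean factorization above (the factor $\lambda+Z_0$) together with the bookkeeping around the invariant plane $X=0$ to isolate the $\lambda_1$-direction as the unique admissible one; once that is in place, the sign analysis and the extraction of asymptotics are routine and parallel to the arguments already used for Propositions \ref{cao}--\ref{mes}.
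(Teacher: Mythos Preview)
Your proof is correct and follows essentially the same linearization approach as the paper: you identify the same eigenvalues $\lambda_1=\frac{1}{p-1}\bigl(p+a-\delta\frac{q+b}{m+1-q}\bigr)$, $\lambda_3=-\bar Z<0$, and the pair $\lambda_2,\lambda_4$ from the quadratic (\ref{lad}), and your transversality argument (that only the $\lambda_1$-eigendirection leaves $\{X=0\}$) is equivalent to the paper's dimension count comparing $\mathcal V_s$ (resp.\ $\mathcal V_u$) with its intersection with $\{X=0\}$. Your derivation of (\ref{rup}) from the exponential convergence of $X$ and $Y$ is likewise the same as the paper's use of (\ref{for}).
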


We obtain similar results of convergence to the points $Q_{0},J_{0}%
,H_{0},D_{0},S_{0}$ by exchanging $p,\delta,s,a$ and $q,\mu,m,b$. There is no
admissible trajectory converginf to $0,K_{0},L_{0},$ see Remark \ref{none}.

\section{Global results for system $(S)$\label{glo}}

We are concerned by the existence of global positive solutions. First we find
again easily some known results by using our dynamical approach.

\begin{proposition}
\label{fac}Assume that system $(S)$ admits a positive solution $(u,v)$ in
$(0,\infty).$ Then the corresponding solution $(X,Y,Z,W)$ of system $(M)$
stays in the box%
\begin{equation}
\mathcal{A=}\left(  0,\frac{N-p}{p-1}\right)  \times\left(  0,\frac{N-q}%
{q-1}\right)  \times\left(  0,N+a\right)  \times\left(  0,N+b\right)  ,
\label{jom}%
\end{equation}
in other words
\begin{equation}
ru^{\prime}+\frac{N-p}{p-1}u>0,\quad rv^{\prime}+\frac{N-q}{q-1}v>0,\quad
r^{1+a}u^{s}v^{\delta}<(N+a)\left\vert u^{\prime}\right\vert ^{p-1},\quad
r^{1+b}u^{\mu}v^{m}<(N+b)\left\vert v^{\prime}\right\vert ^{q-1}. \label{mil}%
\end{equation}
and then
\begin{equation}
u^{s-p+1}v^{\delta}\leqq C_{1}r^{-(p+a)},\quad u^{\mu}v^{m-q+1}\leqq
C_{2}r^{-(q+b)},\quad\text{ in }(0,\infty), \label{sti}%
\end{equation}
where $C_{1}=(N+a)(\frac{N-p}{p-1})^{p-1},C_{2}=(N+b)(\frac{N-q}{q-1})^{q-1},$
and
\begin{equation}
\lim_{r\rightarrow0}r^{\frac{N-p}{p-1}}u=c_{1}\geqq0,\quad\lim_{r\rightarrow
0}r^{\frac{N-q}{q-1}}v(r)=c_{2}\geqq0,\quad\lim\inf_{r\rightarrow\infty
}r^{\frac{N-p}{p-1}}u>0,\quad\lim\inf_{r\rightarrow\infty}r^{\frac{N-q}{q-1}%
}v>0. \label{flic}%
\end{equation}
As a consequence if $s\leqq p-1$ or $m\leqq q-1,$ we have
\begin{equation}
u\leqq K_{1}r^{-\gamma},\quad v\leqq K_{2}r^{-\xi},\quad\text{in }(0,\infty),
\label{jim}%
\end{equation}
with $K_{1}=C_{1}^{(q-1-m)/D}C_{2}^{\delta/D},K_{2}=C_{1}^{\mu/D}%
C_{2}^{(p-1-s)/D}.$\medskip
\end{proposition}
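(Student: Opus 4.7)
The plan is to exploit the product (Kolmogorov) structure of system $(M)$ together with the hypothesis that $(u,v)$ exists globally on $(0,\infty)$, which translates to the solution $(X,Y,Z,W)$ of $(M)$ existing for all $t=\ln r\in\mathbb{R}$. Since $u,v>0$ and $u',v'<0$ on the whole interval, the four quantities in (\ref{eto}) are strictly positive; containment in the box $\mathcal{A}$ amounts to showing that none of them reaches the corresponding upper face. The guiding principle is a barrier argument driven by the fact that finite-time blowup in $t$ is forbidden by global existence.

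First I would treat the faces $X=\tfrac{N-p}{p-1}$ and $Y=\tfrac{N-q}{q-1}$. Setting $\sigma=X-\tfrac{N-p}{p-1}$, the first equation of $(M)$ reads $\sigma'=(\sigma+\tfrac{N-p}{p-1})(\sigma+\tfrac{Z}{p-1})$. On $\{\sigma>0\}$ both factors exceed $\sigma$ (the second because $Z>0$), whence $\sigma'>\sigma^{2}$; any $\sigma(t_{0})>0$ then produces finite-time blowup in $t$, contradicting global existence. A tangent contact $\sigma(t_{0})=0$ is excluded because $\sigma'(t_{0})=\tfrac{(N-p)Z(t_{0})}{(p-1)^{2}}>0$ pushes $\sigma$ into the forbidden half-space. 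The same reasoning applied to the second equation of $(M)$ handles $Y$.

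The faces $Z=N+a$ and $W=N+b$, handled next, are the step I expect to be the main technical point, since the natural vector field points away from the interior and one must argue in reversed time. With $\tau=Z-(N+a)$, the third equation of $(M)$ becomes $\tau'=-Z(sX+\delta Y+\tau)$. On $\{\tau>0\}$ one has $-\tau'\geq Z\tau\geq\tau^{2}$ using $Z=\tau+(N+a)\geq\tau$; reversing time gives $\tfrac{d\tau}{d(-t)}\geq\tau^{2}$ and hence blowup of $\tau$ at a finite time as $t$ decreases, contradicting global existence of $(X,Y,Z,W)$. A tangent contact is excluded because on $\tau=0$ one has $\tau'=-(N+a)(sX+\delta Y)<0$, the strict sign being supplied by $\delta Y>0$ (this is where the standing assumptions $\delta>0$ and $Y>0$ are crucial). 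The symmetric argument, using $\mu>0$, handles $W$. Combined with the previous paragraph this establishes $(X,Y,Z,W)\in\mathcal{A}$.

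The remaining claims then follow by translation. The inequalities (\ref{mil}) are direct rewritings of the box constraints via the definitions in (\ref{eto}); combining $Z<N+a$ with $(-u')^{p-1}<\bigl(\tfrac{N-p}{p-1}\bigr)^{p-1}(u/r)^{p-1}$ gives the first estimate in (\ref{sti}), and symmetrically for the second. The bound $X<\tfrac{N-p}{p-1}$ is equivalent to $(r^{(N-p)/(p-1)}u)'>0$, so $r^{(N-p)/(p-1)}u$ is strictly increasing and positive, yielding both a nonnegative limit at $0$ and a positive $\liminf$ at $\infty$ as in (\ref{flic}); likewise for $v$. Finally, the representation (\ref{for}) gives $u=r^{-\gamma}(ZX^{p-1})^{(q-1-m)/D}(WY^{q-1})^{\delta/D}$ with $ZX^{p-1}<C_{1}$ and $WY^{q-1}<C_{2}$; when $m\leq q-1$ both exponents are nonnegative, whence $u\leq K_{1}r^{-\gamma}$. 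The symmetric computation for $v$ under $s\leq p-1$ yields $v\leq K_{2}r^{-\xi}$.
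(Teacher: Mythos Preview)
Your argument is correct and follows essentially the same route as the paper's proof: a finite-time blowup obstruction forward in $t$ for $X,Y$ and backward in $t$ for $Z,W$, using the Kolmogorov structure of $(M)$. Your differential inequalities $\sigma'>\sigma^{2}$ and $-\tau'\geq\tau^{2}$ are slightly sharper than the paper's (which first waits until $X>2\tfrac{N-p}{p-1}$ before invoking $X_{t}\geq X^{2}/2$), but the mechanism is identical, as is the derivation of (\ref{mil})--(\ref{flic}) and the use of (\ref{for}) for (\ref{jim}).

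One minor remark on (\ref{jim}): you derive $u\leq K_{1}r^{-\gamma}$ under $m\leq q-1$ and $v\leq K_{2}r^{-\xi}$ under $s\leq p-1$, i.e.\ each bound under its own hypothesis. The paper's statement reads ``$s\leq p-1$ \emph{or} $m\leq q-1$'' but its proof gives no more detail than yours, so your reading is the natural one; just be aware that under only one of the two hypotheses, only the corresponding bound is actually forced by this argument.
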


\begin{proof}
The solution of system $(M)$ in $\mathcal{R}$ defined on $\mathbb{R}.$ On the
hyperplane $X=\frac{N-p}{p-1}$ we have $X_{t}>0,$ the field is going out. If
at some time $t_{0},$ $X(t_{0})=$ $\frac{N-p}{p-1},$ then $X(t)>\frac
{N-p}{p-1}$ for $t>t_{0},$ in turn $X_{t}\geqq X\left[  X-\frac{N-p}%
{p-1}\right]  >0,$ since since $Z>0,$ thus $X(t)>2\frac{N-p}{p-1}$ for
$t>t_{1}>t_{0},$ then $X_{t}\geqq X^{2}/2,$ which implies that $X$ blows up in
finite time; thus $X(t)<\frac{N-p}{p-1}$ on $\mathbb{R};$ in the same way
$Y(t)<\frac{N-q}{q-1}.$ On the hyperplane $Z=N+a$ we have $Z_{t}<0,$ the field
is entering. If at some time $t_{0},$ $Z(t_{0})=N+a$ then $Z(t)>N+a$ for
$t<t_{0},$ then $Z_{t}\leqq Z(N+a-Z),$ since $sX+\delta Y>0,$ and $Z$ blows up
in finite time as above; thus $Z(t)<N+a$ on $\mathbb{R},$ in the same way
$W(t)<N+b.$ Then (\ref{mil}),(\ref{sti}) and (\ref{jim}) follows. By
integration it implies that $r^{(N-p)/(p-1)}u(r)$ is nondecreasing near $0$ or
$\infty$, hence (\ref{flic}) holds.\medskip
\end{proof}

Next we prove Theorem \ref{equi}.\medskip

\begin{proof}
[Proof of Theorem \ref{equi}](i) The trajectories of the regular solutions
start from $N_{0}=(0,0,N+a,N+b)$, from Proposition \ref{ereg}, and the
unstable variety $\mathcal{V}_{u}$ has dimension 2, from (\ref{lib}),
(\ref{vib}). It is given locally by $Z=\varphi(X,Y),W=\psi(X,Y)$ for $(X,Y)\in
B(0,\rho)\backslash\left\{  0\right\}  \subset\mathbb{R}^{2}.\medskip$

To any $(x,y)\in B(0,\rho)\backslash\left\{  0\right\}  $ we associate the
unique trajectory $\mathcal{T}_{x,y}$ in $\mathcal{V}_{u}$ going through this
point. If $T^{\ast}$ is the maximal interval of existence of a solution on
$\mathcal{T}_{x,y}$, then $\lim_{t\rightarrow T^{\ast}}(X(t)+Y(t))=\infty.$
Indeed $Z,$ and $W$ satisfy $0<Z<N+a,$ $0<W<N+b$ as long as the solution
exists, because at a time $T$ where $Z(T)=N+a,$ we have $Z_{t}<0.$ If there
exists a first time $T$ such that $X(T)=\frac{N-p}{p-1}$ or $Y(T)=\frac
{N-q}{q-1},$ then $T<T^{\ast}.$ We consider the open rectangle $\mathcal{N}$
of submits
\[
(0,0),\quad\varpi_{1}=\left(  \frac{N-p}{p-1},0\right)  ,\quad\varpi
_{2}=\left(  0,\frac{N-q}{q-1}\right)  ,\quad\varpi=\left(  \frac{N-p}%
{p-1},\frac{N-q}{q-1}\right)  .
\]
Let $\mathcal{U}=\left\{  (x,y)\in B(0,\rho):x,y>0\right\}  $; then
$\mathcal{U=S}_{1}\cup\mathcal{S}_{2}\cup\mathcal{S}_{3}\cup\mathcal{S},$
where
\[
\left\{
\begin{array}
[c]{c}%
\mathcal{S}_{i}=\left\{  (x,y)\in\mathcal{U}:\mathcal{T}_{x,y}\text{ leaves
}\mathcal{N}\text{ on }\left(  \varpi_{i},\varpi\right)  \right\}  ,\quad
i=1,2,\\
\mathcal{S}_{3}=\left\{  (x,y)\in\mathcal{U}:\text{ }\mathcal{T}_{x,y}\text{
leaves }\mathcal{N}\text{ at }\varpi\right\}  ,\quad\mathcal{S}=\left\{
(x,y)\in\mathcal{U}:\text{ }\mathcal{T}_{x,y}\text{ stays in }\mathcal{N}%
\right\}  .
\end{array}
\right.
\]
Any element of $\mathcal{S}$ defines a G.S. Assume $s<\frac{N(p-1)+p+pa}%
{N-p}.$ Let us show that $\mathcal{S}_{1}$ is nonempty$.$ Consider the
trajectory $\mathcal{T}_{\bar{x},0}$ on $\mathcal{V}_{u}$ associated to
$(\bar{x},0),$ with $\bar{x}\in\left(  0,\rho\right)  ,$ going through
$\bar{M}=(\bar{x},0,$ $\varphi(\bar{x},0),\psi(\bar{x},0));$ it is
\underline{\textit{not admissible}} for our problem, since it is in the
hyperplane $Y=0$: it satisfies the system
\[
\left\{
\begin{array}
[c]{c}%
X_{t}=X\left[  X-\frac{N-p}{p-1}+\frac{Z}{p-1}\right]  ,\\
Z_{t}=Z\left[  N+a-sX-Z\right]  ,\\
W_{t}=W\left[  N+b-\mu X-W\right]  ,
\end{array}
\right.
\]
which is \underline{\textit{not completely coupled}}. The two equations in
$X,Z$ corresponds to the equation
\begin{equation}
-\Delta_{p}U=r^{a}U^{s}. \label{vic}%
\end{equation}
The regular solutions of (\ref{vic}) are changing sign, since $s$ is
subcritical, see Section \ref{scal}. Consider the solution $(\bar{X},\bar
{Y},\bar{Z},\bar{W})$ of system $(M),$ of trajectory $\mathcal{T}_{\bar{x},0}%
$, going through $\bar{M}$ at time $0;$ it satisfies $\bar{Y}=0,$ and $\bar
{X}(t)>0,$ $\bar{Z}(t)>0$ tend to $\infty$ in finite time $T^{\ast}$, then for
any given $C\geqq\frac{N-p}{p-1},$ there exist a first time $T<T^{\ast}$ such
that $\bar{X}(T)=C$, and $\bar{Y}(T)=0$. We have $\lim_{t\rightarrow-\infty
}\bar{W}=N+b,$ and necessarily $0<\bar{W}<N+b,$ in particular $0<\bar
{W}(T)<N+b;$ and $\bar{W}_{t}$ is bounded on $\left(  -\infty,T^{\ast}\right)
,$ then $\bar{W}$ has a finite limit at $T^{\ast}.\ $ The field at time $T$ is
transverse to the hyperplane $X=\frac{N-p}{p-1}$: we have $\bar{X}_{t}\geqq
C\frac{Z(T)}{p-1}>0,$ since $\bar{Z}(T)>0$. From the continuous dependance of
the initial data at time $0,$ for any $\varepsilon>0,$ there exists $\eta>0$
such that for any $(x,y)\in B((\bar{x},0),\eta)$ and for any $(X,Y,Z,W)$ on
$\mathcal{T}_{x,y},$ there exists a first time $T_{\varepsilon}$ such that
$X(T_{\varepsilon})=C,$ and $\left\vert Y(t)\right\vert \leqq\varepsilon$ for
any $t\leqq T_{\varepsilon}$, in particular for any $(x,y)$ $\in B((\bar
{x},0),\eta)$ with $y>0,$ and then $0<Y(t)\leqq\varepsilon$ for any $t\leqq
T_{\varepsilon}.$ Let us take $C=\frac{N-p}{p-1}.$ Then $(x,y)\in$
$\mathcal{S}_{1}$. The same arguments imply that $\mathcal{S}_{1}$ is open.
Similarly assuming $m<\frac{N(q-1)+q+qb}{N-q}$ implies that $\mathcal{S}_{2}$
is nonempty and open. By connexity $\mathcal{S}$ is empty if and only if
$\mathcal{S}_{3}$ is nonempty. \medskip

(ii) Here the difficulty is due to the fact that the zeros of $u,v$ correspond
to infinite limits for $X,Y,$ and then the argument of continuous dependance
is no more available. We can write $\mathcal{U=M}_{1}\cup\mathcal{M}_{2}%
\cup\mathcal{M}_{3}\cup\mathcal{S},$ where
\[
\left\{
\begin{array}
[c]{c}%
\mathcal{M}_{1}=\left\{  (x,y)\in\mathcal{U}\text{ and }\mathcal{T}%
_{x,y}\text{ has an infinite branch in }X\text{ with }Y\text{ bounded}%
\right\}  ,\\
\mathcal{M}_{2}=\left\{  (x,y)\in\mathcal{U}:\text{ }\mathcal{T}_{x,y}\text{
has an infinite branch in }Y\text{ with }X\text{ bounded}\right\}  ,\\
\mathcal{M}_{3}=\left\{  (x,y)\in\mathcal{U}:\text{ }\mathcal{T}_{x,y}\text{
has an infinite branch in }(X,Y)\right\}  .
\end{array}
\right.
\]
In other words, $\mathcal{M}_{1}$ is the set of $(x,y)\in\mathcal{U}$ such
that for any $(X,Y,Z,W)$ on $\mathcal{T}_{x,y},$ there exists a $T^{\ast}$
such that $\lim_{t\rightarrow T^{\ast}}X(t))=\infty,$ and $Y(t)$ stays bounded
on $\left(  -\infty,T^{\ast}\right)  ,$ that means the set of $(x,y)\in
\mathcal{U}$ such that for any solution $(u,v)$ corresponding to
$\mathcal{T}_{x,y},$ $u$ vanishes before $v;$ similarly for $\mathcal{M}_{2}.$
Otherwise $\mathcal{M}_{3}$ is the set of $(x,y)\in\mathcal{U}$ such that
there exists a $T^{\ast}$ such that $\lim_{t\rightarrow T^{\ast}}%
X(t)=\lim_{t\rightarrow T^{\ast}}Y(t)=\infty,$ that means $(u,v)$ vanish at
the same $R^{\ast}=e^{T^{\ast}}$. In that case, from the H\"{o}pf Lemma,
$\lim_{r\rightarrow R}\frac{u^{\prime}}{(r-R)u}=1,$ then $\lim_{t\rightarrow
T^{\ast}}\frac{X}{Y}=1.$

We are lead to show that $\mathcal{M}_{1}$ is nonempty and open for
$s<\frac{N(p-1)+p+pa}{N-p}$. We consider again the trajectory $\mathcal{\bar
{T}}$ and take $C$ large enough: $C=2(\frac{N-p}{p-1}+\frac{N+\left\vert
b\right\vert }{q-1}).$ Let $\varepsilon\in\left(  0,\frac{C}{2}\right)  .$ For
any $(x,y)\in B((\bar{x},0),\eta)$ with $y>0,$ and any $(X,Y,Z,W)$ on
$\mathcal{T}_{x,y},$ there is a first time $T_{\varepsilon}$ such that
$X(T_{\varepsilon})=C,$ and $0<Y(t)\leqq\varepsilon$ for any $t\leqq
T_{\varepsilon}.$ And $X$ is increasing and $X_{t}\geqq X(X-C),$ thus there
exists $T^{\ast}$ such that $\lim_{t\rightarrow T^{\ast}}X(t)=\infty.$ Setting
$\varphi=X/Y,$ we find
\[
\frac{\varphi_{t}}{\varphi}=X-Y+\frac{Z}{p-1}-\frac{W}{q-1}+\frac{N-q}%
{q-1}-\frac{N-p}{p-1}\geqq X-Y-\frac{C}{2}%
\]
then $\varphi_{t}(T_{\varepsilon})>0$. Let $\theta=\sup\left\{
t>T_{\varepsilon}:\varphi_{t}>0\right\}  ;$ suppose that $\theta$ is finite;
then $\varphi(\theta)>\varphi\left(  T_{\varepsilon}\right)  =C/\varepsilon>2$
and $X\left(  \theta\right)  \leqq Y\left(  \theta\right)  +C<X\left(
\theta\right)  /2+C,$ which is contradictory. Then $\varphi$ is increasing up
to $T^{\ast};$ if $\lim_{t\rightarrow T^{\ast}}Y(t)=\infty,$ then
$\lim_{t\rightarrow T^{\ast}}\varphi=1,$ which is impossible. Then
$(x,y)\in\mathcal{M}_{1},$ thus $\mathcal{M}_{1}$ is nonempty. In the same way
$\mathcal{M}_{1}$ is open. Indeed for any $(\bar{x},\bar{y})\in\mathcal{M}%
_{1}$ there exists $M>0$ such that $0<\bar{Y}(t)\leqq M/2$ on $\mathcal{T}%
_{\bar{x},\bar{y}}$. To conclude we argue as above, with $(\bar{x},0)$
replaced by $(\bar{x},\bar{y}),$ and $C$ replaced by $C+M.$\medskip
\end{proof}

\begin{proof}
[Proof of Proposition \ref{com}]Assume $s\geqq\frac{N(p-1)+p+pa}{N-p}.$
Consider the Pohozaev type function
\begin{equation}
\mathcal{F}(r)=r^{N}\left[  \frac{\left\vert u^{\prime}\right\vert ^{p}%
}{p^{\prime}}+\frac{r^{a}u^{s+1}}{s+1}v^{\delta}+\frac{N-p}{p}\frac
{u\left\vert u^{\prime}\right\vert ^{p-2}u^{\prime}}{r}\right]  =r^{N-p}%
u^{p}\left[  \frac{X}{p^{\prime}}+\frac{1}{s+1}Z-\frac{N-p}{p}\right]  .
\label{pla}%
\end{equation}
We find $\mathcal{F}(0)=0$ and
\begin{align}
\mathcal{F}^{\prime}(r)  &  =r^{N-1+a}\left[  \left(  \frac{N+a}{s+1}%
-\frac{N-p}{p}\right)  v^{\delta}u^{s+1}+\frac{\delta}{s+1}ru^{s+1}%
v^{\delta-1}v^{\prime}\right] \nonumber\\
&  =r^{N-1+a}v^{\delta}u^{s+1}\left[  \frac{N+a}{s+1}-\frac{N-p}{p}%
-\frac{\delta Y}{s+1}\right]  \label{plu}%
\end{align}
From our assumption, $\mathcal{F}$ is decreasing, and $Z>0,$ thus
$X<\frac{N-p}{p-1}.$ Then $\mathcal{S}_{1},\mathcal{S}_{3}$ are empty. If
moreover $m\geqq\frac{N(q-1)+q+qb}{N-q}$ then $\mathcal{S}_{2}$ is empty,
therefore $\mathcal{S}=\mathcal{U}.$\medskip
\end{proof}

\begin{remark}
Let us only assume that $s\geqq\frac{N(p-1)+p+pa}{N-p}.$ If one function has a
first zero, it is $v$. Indeed if there exists a first value $R$ where
$u(R)=0,$ and $v(r)>0$ on $\left[  0,R\right)  ,$ then $\mathcal{F}%
(R)=\frac{R^{N}}{p^{\prime}}\left\vert u^{\prime}(R)\right\vert ^{p}>0.$
\end{remark}

As a first consequence we obtain existence results for the Dirichlet problem.
It solves an open problem in the case $s>p-1$ or $m>q-1,$ and extends some
former results of \cite{CFMiT} and \cite{Z2}. Our proof, based on the shooting
method differs from the proof of \cite{CFMiT}, based on degree theory and
blow-up technique. Our results extend the ones of \cite[Theorem 2.2]{B1}
relative to the case $p=q=2,$ obtained by studying the equation satisfied by a
suitable function of $u,v.$

\begin{corollary}
\label{dir} system $(S)$ admits no G.S. and then there is a radial solution of
the Dirichlet problem in a ball in any of the following cases:\medskip

(i) $p<s+1,q<m+1,$\quad and\quad$\min(s\frac{N-p}{p-1}+\frac{N-q}{q-1}%
\delta-(N+a),\frac{N-p}{p-1}\mu+m\frac{N-q}{q-1}-(N+b))\leqq0;$\medskip

(ii) $p<s+1,$ $q>m+1$\quad and\quad$s\frac{N-p}{p-1}+\frac{N-q}{q-1}%
\delta-(N+a)\leqq0$ or $\gamma-\frac{N-p}{p-1}>0;$\medskip

(iii) $p>s+1,q>m+1$\quad and\quad$\max(\gamma-\frac{N-p}{p-1},\xi-\frac
{N-q}{q-1})\geqq0;$\medskip

(iv) $p\geqq s+1,q\geqq m+1$\quad and\quad$\max(\gamma-\frac{N-p}{p-1}%
,\xi-\frac{N-q}{q-1})>0.$\medskip
\end{corollary}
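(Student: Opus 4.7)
The plan is to show that in each of cases (i)--(iv) no G.S.\ can exist; combined with Theorem \ref{equi}, this produces the required radial solution of the Dirichlet problem in a ball, once the subcriticality assumptions $s<\frac{N(p-1)+p+pa}{N-p}$ and $m<\frac{N(q-1)+q+qb}{N-q}$ of that theorem are verified. These follow in each case from $p+a>0$, $q+b>0$ and $D>0$: in (iii)--(iv) from $s\leqq p-1$, $m\leqq q-1$; in (i)--(ii) from the inequalities appearing in the hypothesis, each of which controls one of the exponents, the other being handled via the structural assumption $D>0$.

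Suppose, for contradiction, that $(u,v)$ is a G.S. Proposition \ref{fac} supplies three ingredients: the lower asymptotics $\liminf_{r\to\infty}r^{(N-p)/(p-1)}u>0$ and $\liminf_{r\to\infty}r^{(N-q)/(q-1)}v>0$, the pointwise bounds (\ref{sti}), and, when $s\leqq p-1$ or $m\leqq q-1$, the global upper estimates (\ref{jim}). In cases (iii)--(iv), both monotonicity conditions hold, so (\ref{jim}) is available. If $\gamma>(N-p)/(p-1)$, then $r^{(N-p)/(p-1)}u\leqq K_{1}r^{(N-p)/(p-1)-\gamma}\to 0$, contradicting the lower $\liminf$; symmetrically $\xi>(N-q)/(q-1)$ is incompatible with the lower bound on $v$. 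This closes case (iv) directly. In case (iii) the hypothesis is only $\geqq 0$, and the borderline value $\gamma=(N-p)/(p-1)$ must be excluded via a finer analysis at the fixed point $A_{0}$.

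Cases (i)--(ii) rely instead on (\ref{sti}). Since $s-p+1>0$ in case (i) and in the relevant branch of case (ii), feeding the lower $\liminf$ bounds into the first inequality of (\ref{sti}) yields, for $r$ large,
\begin{equation*}
c_{1}^{\,s-p+1}c_{2}^{\,\delta}\,r^{\,-\left[\,s\frac{N-p}{p-1}+\delta\frac{N-q}{q-1}-(N-p)\,\right]}\leqq C_{1}\,r^{-(p+a)},
\end{equation*}
which is inconsistent for large $r$ whenever $s\frac{N-p}{p-1}+\delta\frac{N-q}{q-1}<N+a$; the symmetric estimate in $u^{\mu}v^{m-q+1}$ handles the other branch of the $\min$. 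Thus in (i) at least one branch of the strict min condition produces a contradiction. In case (ii), since $q>m+1$ the bound (\ref{jim}) still holds on $v$, so the alternative hypothesis $\gamma>(N-p)/(p-1)$ is treated exactly as in (iii)--(iv).

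The principal obstacle lies in the borderline equality cases (the $\min$ equal to $0$ in (i)--(ii), or $\gamma=(N-p)/(p-1)$ in (iii)), where the pointwise estimates match only asymptotically and no immediate contradiction arises. The remedy is to invoke Proposition \ref{cao}: convergence of the trajectory to $A_{0}=((N-p)/(p-1),(N-q)/(q-1),0,0)$ as $r\to\infty$ requires strict versions of precisely these inequalities, so in the equality cases $A_{0}$ cannot lie in the $\omega$-limit set of the G.S.\ trajectory. Combined with the trapping of the trajectory in the bounded box $\mathcal{A}$ from (\ref{jom}) and the absence of other admissible fixed points in $\bar{\mathcal{A}}$ at which one could accumulate under the given parameters, this rules out the asymptotic behavior that a G.S.\ is forced to exhibit, closing the remaining borderline cases.
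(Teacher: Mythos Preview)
Your treatment of the strict-inequality subcases is correct and close in spirit to the paper's: combining (\ref{sti}) with the lower bounds from (\ref{flic}) is equivalent to the paper's device of writing $-\Delta_{p}u\geqq Cr^{a-\delta(N-q)/(q-1)}u^{s}$ and invoking the scalar upper estimate of \cite{BPo}, and your handling of $\gamma>(N-p)/(p-1)$ via (\ref{jim}) is exactly what the paper does in its case~(ii).

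The genuine gap is in the borderline (equality) subcases. Your argument there runs: the G.S.\ trajectory is trapped in the bounded box~$\mathcal{A}$, hence must accumulate at a fixed point; Proposition~\ref{cao} excludes $A_{0}$; and ``absence of other admissible fixed points'' finishes. This fails on two counts. First, system~$(M)$ is four-dimensional, so a bounded trajectory need not converge, nor even have an $\omega$-limit set consisting only of fixed points; no Poincar\'e--Bendixson mechanism is available. Second, $\bar{\mathcal{A}}$ typically contains several other fixed points ($M_{0}$, $P_{0}$, $Q_{0}$, $G_{0}$, $H_{0}$, $I_{0}$, $J_{0}$, $C_{0}$, $D_{0}$, $R_{0}$, $S_{0}$), and you give no argument ruling them out under the stated hypotheses.

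The paper avoids the dynamical viewpoint at this step and argues analytically. In the equality case of~(i), the lower bound $v\geqq c\,r^{-(N-q)/(q-1)}$ turns the first equation into $-\Delta_{p}u\geqq Cr^{-N}$ for large $r$; integrating gives $r^{N-1}|u'|^{p-1}\to\infty$, whereas a refinement of the same estimates forces $u\leqq C'r^{-(N-p)/(p-1)}$ and hence $r^{N-1}|u'|^{p-1}\leqq C''r^{N-p}u^{p-1}$ bounded, a contradiction. In the equality case $\gamma=(N-p)/(p-1)$ of~(ii) the paper first shows $\xi>(N-q)/(q-1)$, then pins down $u$ and $v$ to exact power rates and again reduces to $-\Delta_{p}u\geqq Cr^{-N}$. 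For~(iii) the paper does not argue directly but cites \cite{CFMiT} and \cite{BPo}. Replacing your dynamical hand-waving in the equality cases by these analytic reductions would make the proof complete.
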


\begin{proof}
From Theorem \ref{equi}, we are reduced to prove the nonexistence of
G.S.\ \medskip

\noindent(i) Assume $p<s+1,$ and $s\frac{N-p}{p-1}+\frac{N-q}{q-1}%
\delta-(N+a)<0.$ We have $-\Delta_{p}u\geqq Cr^{a-\frac{N-q}{q-1}\delta}u^{s}$
for large $r.$ From \cite[Theorem 3.1]{BPo}, we find $u=O(r^{-(p+a-\frac
{N-q}{q-1}\delta)/(s+1-p)})$, and then $s\frac{N-p}{p-1}+\frac{N-q}{q-1}%
\delta-(N+a)\geqq0,$ from (\ref{flic}), which contradicts our assumption. In
case of equality, we find $-\Delta_{p}u\geqq Cr^{-N}$ for large $r,$ which is
impossible. Then there exists no G.S. This improves ythe result of
\cite{CFMiT} where the minimum is replaced by a maximum.\medskip

\noindent(ii) Assume $p<s+1,$ $q>m+1$ and $\gamma-\frac{N-p}{p-1}>0;$ then
$u=O(r^{-\gamma}),$ which contradicts (\ref{flic}). If $\gamma-\frac{N-p}%
{p-1}=0,$ then $\lim r^{\frac{N-p}{p-1}}u=\alpha>0,$ and $\xi>\frac{N-q}%
{q-1}.$ Hence $-\Delta_{q}v\geqq Cr^{b-\frac{N-p}{p-1}\mu}v^{m}$ for large
$r,$ then $v\geqq Cr^{(q+b-\frac{N-p}{p-1}\delta)/(q-1-m)}=Cr^{-\xi}.$ There
exists $C_{1}>0$ such that $C_{1}\leqq$ $r^{\xi}v\leqq2C_{1}$ for large $r,$
from \cite[Theorem 3.1]{BPo} and (\ref{jim}), then $-\Delta_{p}u\geqq Cr^{-N}$
for some $C>0,$ which is again contradictory.\medskip

\noindent(iii) (iv) The nonexistence of G.S is obtained by extension of the
proof of \cite{CFMiT} to the case $a,b\neq0.$ Moreover (iii) implies the
nonexistence of positive solution $(u,v)$, radial or not, in any exterior
domain $(R,\infty)\times(R,\infty),R>0$ from \cite{BPo}.\medskip
\end{proof}

\begin{corollary}
Assume (\ref{cd}) with $p=q=2.$ If $\delta+s\geqq\frac{N+2+2a}{N-2}$ and
$\mu+m\geqq\frac{N+2+2b}{N-2},$ then system $(S)$ admits a G.S.\medskip
\end{corollary}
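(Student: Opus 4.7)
The plan is a case analysis. In the first case, $s\geq\tfrac{N+2+2a}{N-2}$ and $m\geq\tfrac{N+2+2b}{N-2}$: Proposition~\ref{com} yields that every regular radial solution is a G.S., and such solutions exist by Proposition~\ref{ereg}, concluding at once. In the second case, both $s<\tfrac{N+2+2a}{N-2}$ and $m<\tfrac{N+2+2b}{N-2}$, so Theorem~\ref{equi} applies and I would argue by contradiction: assume $(S)$ admits no G.S., so that Theorem~\ref{equi}(ii) yields a positive radial Dirichlet solution $(u,v)$ on some ball $B_R$ with $u(R)=v(R)=0$. The aim is to derive a contradiction from a Pohozaev-type identity built from the hypothesis.

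Three identities enter. First, the scalar Pohozaev function $\mathcal{F}$ of (\ref{pla}) satisfies $\mathcal{F}(0)=0$ and $\mathcal{F}(R)=\tfrac{R^N}{2}(u'(R))^2>0$; rewriting $\int_0^R \mathcal{F}'\,dr$ by integrating the $Y$-term by parts yields
\[
\Bigl(\tfrac{N+a}{s+1}-\tfrac{N-2}{2}\Bigr) P \;-\; \tfrac{\delta}{s+1}\, T_{v} \;=\; \tfrac{R^{N}(u'(R))^{2}}{2},
\]
with $P=\int_0^R r^{N-1+a}u^{s+1}v^\delta\,dr$ and $T_{v}=-\int_0^R r^{N+a}u^{s+1}v^{\delta-1}v'\,dr>0$. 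Second, the analogous identity for $\mathcal{G}$ produces the positive boundary term $\tfrac{R^{N}(v'(R))^{2}}{2}$. Third, multiplying the first equation by $rv'+\tfrac{N-2}{2}v$ and the second by $ru'+\tfrac{N-2}{2}u$, summing, and invoking the Rellich identity $\int r^{a}u^{s}v^{\delta+1}=\int r^{b}u^{\mu+1}v^{m}=:M$, gives a Hamiltonian-style identity in the spirit of (\ref{Pom}),
\[
\Bigl[\tfrac{N+a}{\delta+1}+\tfrac{N+b}{\mu+1}-(N-2)\Bigr]\,M \;=\; R^{N}u'(R)v'(R) \;+\; \tfrac{s}{\delta+1}\,S_{uv}\;+\;\tfrac{m}{\mu+1}\,S_{vu},
\]
where $S_{uv},S_{vu}$ are further explicit strictly positive integrals.

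The hypothesis $\delta+s\geq\tfrac{N+2+2a}{N-2}$ is equivalent to $(N-2)(s+1)/2-(N+a)\geq -\delta(N-2)/2$, and symmetrically for the second equation. Taking the weighted combination of the three identities obtained by scaling the two $\mathcal{F},\mathcal{G}$-identities by $s+1$ and $m+1$ respectively and merging with the cross identity, the contributions in $P$ and $Q$ reorganize into the factors $(\delta+s+1)(N-2)-2(N+a)$ and $(\mu+m+1)(N-2)-2(N+b)$, both nonpositive by hypothesis, while the right-hand side is a strictly positive combination of $R^{N}(u'(R))^{2}$, $R^{N}(v'(R))^{2}$ and $R^{N}u'(R)v'(R)$. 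This is the desired contradiction.

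The mixed case, with only one of $s,m$ subcritical, is handled by replacing the corresponding Pohozaev identity by the $\mathcal{F}$- (or $\mathcal{G}$-)monotonicity of the proof of Proposition~\ref{com}, which forces $u$ (resp.\ $v$) to stay positive on $(0,\infty)$; the remaining identities are then enough to close. The hard part is identifying the correct weighted combination above: each of the three identities taken alone is too weak, but in the precise weighting dictated by $s+1$ and $m+1$ the hypothesis appears exactly as the coefficient of the leading interior term, which is what makes the sign argument work.
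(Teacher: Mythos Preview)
Your overall strategy of splitting into cases and invoking Theorem~\ref{equi} in the subcritical range is sound, but the core of your Case~2 argument has a real gap, and the paper proceeds very differently.

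The paper's proof is two lines: it simply cites the moving-spheres results of Reichel--Zou and Zou, which establish that under the hypotheses $\delta+s\geq\tfrac{N+2+2a}{N-2}$ and $\mu+m\geq\tfrac{N+2+2b}{N-2}$ the Dirichlet problem for $(S)$ has no positive (radial or nonradial) solution. Theorem~\ref{equi} then gives the G.S. by contraposition. The heavy lifting is outsourced to those references; no Pohozaev computation is performed.

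Your attempt to replace the moving-spheres input by a direct Pohozaev argument is where things break down. First, a sign slip: the hypothesis $\delta+s\geq\tfrac{N+2+2a}{N-2}$ is equivalent to $(\delta+s+1)(N-2)-2(N+a)\geq 0$, not $\leq 0$, so your ``both nonpositive by hypothesis'' is backwards. More seriously, the claimed reorganization does not close: the three identities involve five genuinely distinct interior integrals $P,Q,M,T_v,T_u$ (and your $S_{uv},S_{vu}$ introduce two more), and integration by parts only produces circular relations among them rather than eliminating the cross terms. For the nonvariational system there is no single Pohozaev functional whose derivative has a sign under these hypotheses; this is precisely why the literature resorts to moving spheres here rather than to integral identities. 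The cross identity alone only works above the hyperbola $\mathcal{H}_0$ (this is Mitidieri's condition), which for $s,m>0$ is strictly stronger than what you are assuming. Your proposed weighted combination would need to absorb the positive $T_v,T_u$ terms against the $M$ and $S$ terms, and you have not shown---nor do I believe one can show---that the algebra cooperates for arbitrary $s,m,\delta,\mu,a,b$.

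Your handling of the mixed case is also incomplete: Proposition~\ref{com} only guarantees that $X<\tfrac{N-p}{p-1}$, so $u$ cannot vanish first, but it does not by itself force $v$ to stay positive, and Theorem~\ref{equi} is unavailable since one of its hypotheses fails.
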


\begin{proof}
It was shown in \cite{ReZ}, \cite{Z5} by the moving spheres method that the
Dirichlet problem has no radial or nonradial solution. Then Theorem \ref{equi}
applies again.\medskip
\end{proof}

We aso extend and improve a result of nonexistence of \cite{ChLuG} for the
case $p=q=2,a=0,s>1$:

\begin{proposition}
\label{pas}Assume $s+1>p$ or $\gamma>\frac{N-p}{p},$ and
\begin{equation}
s+\frac{p(N-q)}{(q-1)(N-p)}\delta<\frac{N(p-1)+pa+p}{N-p} \label{ouh}%
\end{equation}
Then system $(S)$ admits no $G.S.$ and then there is a solution of the
Dirichlet problem. The same happens by exchanging $p,s,\delta,a,\gamma$ with
$q,m,\mu,b,\xi.$\medskip
\end{proposition}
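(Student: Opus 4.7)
The plan is to argue by contradiction: assume $(u,v)$ is a G.S.\ of $(S)$, combine the \textit{a priori} bounds of Proposition \ref{fac} with the monotonicity of the Pohozaev function $\mathcal{F}$ defined in (\ref{pla}) to force $u(r)\ge c_{0}r^{-(N-p)/p}$ for $r$ large, and then contradict this bound using (\ref{sti}) in the case $s+1>p$ and using (\ref{jim}) in the case $s+1\le p$. The Dirichlet conclusion then follows from Theorem \ref{equi}(ii).

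First I would rewrite hypothesis (\ref{ouh}) in the equivalent form
\[
\frac{(N-p)(s+1)}{p}+\frac{(N-q)\delta}{q-1}<N+a,
\]
obtained by multiplying (\ref{ouh}) by $(N-p)/p$ and adding $(N-p)/p$ to both sides. By Proposition \ref{fac} the trajectory remains in the box $\mathcal{A}$ of (\ref{jom}), so $Y(r)<(N-q)/(q-1)$ for every $r>0$, and (\ref{flic}) together with the strict monotonicity of $r^{(N-p)/(p-1)}u$ and $r^{(N-q)/(q-1)}v$ established in the proof of that proposition gives $u(r)\ge cr^{-(N-p)/(p-1)}$ and $v(r)\ge cr^{-(N-q)/(q-1)}$ for $r$ large. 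Inserting the bound on $Y$ into formula (\ref{plu}) and using the reformulated hypothesis, the bracket $(N+a)/(s+1)-(N-p)/p-\delta Y/(s+1)$ stays uniformly bounded below by a positive constant, hence $\mathcal{F}'(r)>0$ on $(0,\infty)$; since regularity at the origin implies $\mathcal{F}(0)=0$, one gets $\mathcal{F}(r)>0$ throughout.

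The next step exploits the factorization $\mathcal{F}(r)=r^{N-p}u^{p}g(r)$ with $g=X/p'+Z/(s+1)-(N-p)/p$. The factor $g$ is bounded above by a universal constant $C_{g}$ thanks to the box bounds $X\le(N-p)/(p-1)$ and $Z\le N+a$. Strict positivity and monotonicity of $\mathcal{F}$ then imply $u(r)^{p}r^{N-p}\ge\mathcal{F}(r_{0})/C_{g}>0$ for $r\ge r_{0}$, yielding the crucial improved lower bound
\[
u(r)\ge c_{0}r^{-(N-p)/p}\qquad\text{for }r\ge r_{0}.
\]
This upgrades the fundamental-solution decay $r^{-(N-p)/(p-1)}$ of (\ref{flic}) to the Sobolev-critical decay, and extracting it from the Pohozaev monotonicity is the main technical point of the proof.

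Two cases conclude. If $s+1>p$, then (\ref{sti}) combined with $v\ge cr^{-(N-q)/(q-1)}$ yields $u^{s+1-p}\le Cr^{-(p+a)+\delta(N-q)/(q-1)}$, hence $u(r)\le C'r^{\alpha}$ with $\alpha=(-(p+a)+\delta(N-q)/(q-1))/(s+1-p)$. Matching with $u\ge c_{0}r^{-(N-p)/p}$ for large $r$ forces $\alpha\ge -(N-p)/p$, which after clearing denominators is exactly the reverse of the reformulated (\ref{ouh}): contradiction. If instead $s+1\le p$, then $s\le p-1$, so (\ref{jim}) applies and gives $u(r)\le K_{1}r^{-\gamma}$; combined with $u\ge c_{0}r^{-(N-p)/p}$ and the standing hypothesis $\gamma>(N-p)/p$, letting $r\to\infty$ yields a contradiction. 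Nonexistence of a G.S.\ being established, and (\ref{ouh}) trivially giving $s<(N(p-1)+p+pa)/(N-p)$ (drop the positive term on its left-hand side), Theorem \ref{equi}(ii) delivers the announced radial solution of the Dirichlet problem.
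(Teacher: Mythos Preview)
Your proof is correct and follows essentially the same approach as the paper's: both show $\mathcal{F}$ is increasing via the box bound $Y<(N-q)/(q-1)$ from (\ref{jom}), then derive a contradiction from the decay of $u$ (you phrase it as the lower bound $u\ge c_{0}r^{-(N-p)/p}$ conflicting with the upper bounds from (\ref{sti}) or (\ref{jim}), while the paper equivalently shows $r^{N-p}u^{p}\to 0$ and hence $\mathcal{F}\to 0$). One minor correction: the factorization in (\ref{pla}) is $\mathcal{F}(r)=r^{N-p}u^{p}X^{p-1}\bigl[X/p'+Z/(s+1)-(N-p)/p\bigr]$, so your $g$ should carry the extra factor $X^{p-1}$; since $0<X<(N-p)/(p-1)$ this factor is bounded and the argument is unaffected.
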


\begin{proof}
Consider the function $\mathcal{F}$ defined at (\ref{pla}). Suppose that there
exists a G.S. Then from (\ref{jom}) and (\ref{ouh}) we find
\[
\frac{N+a}{s+1}-\frac{N-p}{p}-\frac{\delta Y}{s+1}>\frac{N+a}{s+1}-\frac
{N-p}{p}-\frac{\delta}{s+1}\frac{N-q}{q-1}\geqq0.
\]
From (\ref{plu}), we deduce that $\mathcal{F}$ is nondecreasing. First suppose
$s+1>p.$ From (\ref{sti}) and (\ref{flic}),it follows that $u=O(r^{-k})$ at
$\infty,$ with $k=(p+a-\delta\frac{N-q}{q-1})/(s-p+1).$ In turn $r^{N-p}%
u^{p}=O(r^{(N-p)-kp})=o(1)$ from (\ref{ouh}), then $\mathcal{F}(r)=o(1)$ near
$\infty.$ Next assume $s+1\leqq p$ and $\gamma>\frac{N-p}{p}$. Then
$r^{N-p}u^{p}=O(r^{N-p-\gamma p}),$ hence $\mathcal{F}(r)=o(1)$ near $\infty$.
In any case we get a contradiction.
\end{proof}

\section{The Hamiltonian system\label{HS}}

Here we consider the nonnegative solutions of the variational Hamiltonian
problem $(SH)$ in $\Omega\subset\mathbb{R}^{N}$%
\[
(SH)\left\{
\begin{array}
[c]{c}%
-\Delta u=\left\vert x\right\vert ^{a}v^{\delta},\\
-\Delta v=\left\vert x\right\vert ^{b}u^{\mu},
\end{array}
\right.
\]
where $p=q=2<N,$ $s=m=0,$ $a>b>-2,$ and $D=\delta\mu-1>0.$ For this case we
find
\[
\gamma=\frac{(2+a)+(2+b)\delta}{D},\quad\xi=\frac{2+b+(2+a)\mu}{D},\quad
\gamma+2+a=\delta\xi,\quad\xi+2+b=\mu\gamma.
\]
The particular solution ($u_{0}(r),v_{0}(r))=(Ar^{-\gamma},Br^{-\xi})$ exists
for $0<\gamma<N-2,$ $0<\xi<N-2.$ Here $X,Y,Z,W$ are defined by%

\[
X(t)=\frac{r\left\vert u^{\prime}\right\vert }{u},\qquad Y(t)=\frac
{r\left\vert v^{\prime}\right\vert }{v},\qquad Z(t)=\frac{r^{1+a}v^{\delta}%
}{\left\vert u^{\prime}\right\vert },\qquad W(t)=\frac{r^{1+b}u^{\mu}%
}{\left\vert v^{\prime}\right\vert },
\]
with $t=\ln r,$ and system $(M)$ becomes%
\[
(MH)\left\{
\begin{array}
[c]{c}%
X_{t}=X\left[  X-(N-2)+Z\right]  ,\\
Y_{t}=Y\left[  Y-(N-2)+W\right]  ,\\
Z_{t}=Z\left[  N+a-\delta Y-Z\right]  ,\\
W_{t}=W\left[  N+b-\mu X-W\right]
\end{array}
\right.
\]
This system has a \underline{\textit{Pohozaev type}}\textbf{ }function, well
known at least in the case $a=b=0$, given at (\ref{Pom}):%
\begin{align*}
\mathcal{E}_{H}(r)  &  =r^{N}\left[  u^{\prime}v^{\prime}+r^{b}\frac
{\left\vert u\right\vert ^{\mu+1}}{\mu+1}+r^{a}\frac{\left\vert v\right\vert
^{\delta+1}}{\delta+1}+\frac{N+a}{\delta+1}\frac{vu^{\prime}}{r}+\frac
{N+b}{\mu+1}\frac{uv^{\prime}}{r}\right] \\
&  =r^{N-2}uv\left[  XY-\frac{Y(N+b-W)}{\mu+1}-\frac{(N+a-Z)X}{\delta
+1}\right] \\
&  =r^{N-2-\gamma-\xi}(ZX)^{(\mu+1)/D}(WY)^{(\delta+1)/D}\left[
XY-\frac{Y(N+b-W)}{\mu+1}-\frac{(N+a-Z)X}{\delta+1}\right]  .
\end{align*}
It can also be found by a direct computation, and $\mathcal{E}_{H}$ satisfies
\[
\mathcal{E}_{H}^{\prime}(r)=r^{N-1}u^{\prime}v^{\prime}\left(  \frac
{N+a}{\delta+1}+\frac{N+b}{\mu+1}-(N-2)\right)  .
\]
We define the \underline{\textit{critical case}}\textit{ }as the case where
$(\delta,\mu)$ lie on the hyperbola $\mathcal{H}_{0}$ given by
\begin{equation}
\frac{N+a}{\delta+1}+\frac{N+b}{\mu+1}=N-2,\text{ equivalently }\gamma
+\xi=N-2. \label{hyp}%
\end{equation}
In this case $\gamma=$ $\frac{N+b}{\mu+1},\xi=\frac{N+a}{\delta+1},$ and
$\mathcal{E}_{H}^{\prime}(r)\equiv0$. It corresponds to the existence of a
first integral of system $(M),$ which can also be expressed in the variables
\textrm{U}$=r^{\gamma}u,$\textrm{V}$=r^{\xi}v$ of Remark \ref{vauv}:%
\[
\mathcal{E}_{H}(r)=\mathrm{U}_{t}\mathrm{V}_{t}-\gamma\xi\mathrm{UV}%
\mathcal{+}\frac{\mathrm{U}^{\mu+1}}{\mu+1}+\frac{\mathrm{V}^{\delta+1}%
}{\delta+1}=C.
\]
The supercritical case is defined as the case where $(\delta,\mu)$ is above
$\mathcal{H},$ equivalently $\gamma+\xi<N-2$ and the subcritical case
corresponds to $(\delta,\mu)$ under $\mathcal{H}$.

\begin{remark}
The energy $\mathcal{E}_{H,0}$ of the particular solution associated to
$M_{0}$ is always negative, given by $\mathcal{E}_{H,0}=-\frac{D}{\left(
\mu+1\right)  (\delta+1)}r^{N-2-\gamma-\xi}X_{0}Y_{0}(Z_{0}X_{0})^{(\mu
+1)/D}(W_{0}Y_{0})^{(\delta+1)/D}.$\medskip
\end{remark}

\begin{remark}
In the case $a=b=0,$ it is known that there exists a solution of the Dirichlet
problem in any bounded regular domain $\Omega$ of $\mathbb{R}^{N},$ see for
example \cite{deFFe}, \cite{HuVo}; for general $a,b,$ some restrictions on the
coefficients appear, see \cite{LiuYa} and \cite{deFPeRo}.\medskip
\end{remark}

Next consider the critical and supercritical cases. When $a=b=0,$ there exists
no solution if $\Omega$ is starshaped, see \cite{VdV}. Here we show the
existence of G.S. for general $a,b$. The existence in the critical case with
$a=b=0$ was first obtained in \cite{L}, then in the supercritical case in
\cite{SZ2}, and uniqueness was proved in \cite{HuVo}, \cite{SZ2}. The proofs
of \cite{SZ2} are quite long due to regularity problems, when $\delta$ or
$\mu<1,$ which play no role in our quadratic system.

\begin{remark}
The particular case $\delta=\mu$ and $a=b$ is easy to treat. Indeed in that
case $u=v$ is a solution of the scalar equation $\Delta u+\left\vert
x\right\vert ^{a}\left\vert u\right\vert ^{\delta-1}u=0,$ for which the
critical case is given by $\delta=(N+2+2a)/(N-2).$ Moreover if system $(SH)$
admits a G.S., or a solution of the Dirichlet problem in a ball, it satisfies
$u=v,$ from \cite{B1}. Then we are completely reduced to the scalar case. In
particular, in the critical case, the G.S. are given explicitely by:
$u=v=c(K+r^{(2+a)})^{(2-N)/(2+a)},$ where $K=c^{\delta-1}/(N+a)(N-2);$ in
other words they satisfy (\ref{line}) with $X=Y$ and $Z=W,$ i.e.
\[
\frac{X(t)}{N-2}+\frac{Z(t)}{N+a}-1=0.
\]
Near $\infty,$ the G.S. is (obviously) symmetrical: it joins the points
$N_{0}$ and $A_{0}.$\medskip
\end{remark}

\begin{remark}
Consider the case $\delta=1,$ $a=b=0,$ which is the case of the biharmonic
equation%
\[
\Delta^{2}u=u^{\mu}.
\]
Recall that it is the only case where the conjecture (\ref{(C)}) was
completely proved by Lin in \cite{Lin}. In the critical case $\mu
=(N+4)/(N-4),$ the G.S. are also given explicitely, see \cite{HuVo}:
\[
u(r)=c(K+r^{2})^{(4-N)/2},\quad K=c^{\mu-1}/(N-4)(N-2)N(N+2).
\]
They satisfy the relation $XY=\frac{N-Z}{2}X+\frac{N-4}{2N}(N-W)Y,$ and
moreover we find that they are on an \underline{hyperplane}, of equation
\[
\frac{(N-2)X(t)}{N(N-4)}+\frac{Z(t)}{N}-1=0.
\]
Observe also that the G.S. is \underline{not symmetrical }\textbf{
}near\textbf{ }$\infty$: $u$ behaves like $r^{4-N}$ and $v$ behaves like
$r^{2-N}.$ The trajectory in the phase space joins the points $N_{0}$ and
$Q_{0}=(N-4,N-2,2,0).$\medskip
\end{remark}

\begin{proof}
[Proof of Theorem \ref{cla}]1) \textit{Existence or nonexistence
results:\medskip}

$\bullet$\textit{ }In the supercritical or critical case\textit{ } we apply
any of the two conditions of Theorem \ref{equi}: \textbf{ }Here\textbf{
}$\mathcal{E}_{H}(0)=0,$ and $\mathcal{E}_{H}$ is nonincreasing; there does
not exist solutions of $(M)$ such that at some time $T,$ $X(T)=Y(T)=N-2$,
because at the time $T,$
\[
XY-\frac{Y(N+b-W)}{\mu+1}-\frac{(N+a-Z)X}{\delta+1}=(N-2)\left[
N-2-\frac{N+a}{\delta+1}-\frac{N+b}{\mu+1}+\frac{W}{\mu+1}+\frac{Z}{\delta
+1}\right]  >0
\]
since $W>0,Z>0,$ thus $\mathcal{E}_{H}(e^{T})>0,$ which is impossible.
Otherwise there exists no solution of the Dirichlet problem in a ball
$B(0,R),$ because $\mathcal{E}_{H}(R)=R^{N}u^{\prime}(R)v^{\prime}(R)>0$ from
the H\"{o}pf Lemma. Then there exists a G.S.\ The uniqueness is proved in
\cite{HuVo}.\textit{\medskip}

$\bullet$ In the subcritical case there is no radial G.S.: it would satisfy
$\mathcal{E}_{H}(0)=0,$ and $\mathcal{E}_{H}$ is nondecreasing, $\mathcal{E}%
_{H}(r)\leqq Cr^{N-2-\gamma-\xi}$ from (\ref{jom}), and $\gamma+\xi>(N-2),$
then $\lim_{r\rightarrow\infty}\mathcal{E}_{H}(r)=0$. From Theorem \ref{equi},
there exists a solution of the Dirichlet problem.\medskip\ 

2) \textit{Behaviour of the }$G.S.$\textit{ in the critical case. }

It is easy to see that the condition (\ref{ero}) implies $\mu>\frac{2+b}{N-2}$
and $\delta>\frac{2+a}{N-2},$ and that $\delta\leqq\frac{N+a}{N-2}$ and
$\mu\leqq\frac{N+b}{N-2}$ cannot hold simultaneously. One can suppose that
$\delta>\frac{N+a}{N-2}.$ Let $\mathcal{T}$ be the unique trajectory of the
G.S.. Then $\mathcal{E}_{H}(0)=0,$ thus $\mathcal{T}$ lies on the variety
$\mathcal{V}$ of energy $0$, defined by
\begin{equation}
\frac{X(N+a-Z)}{\delta+1}+\frac{Y(N+b-W)}{\mu+1}=XY. \label{var}%
\end{equation}
From (\ref{mil}) $\mathcal{T}$ starts from the point $N_{0},$ and from
(\ref{jom}) $\mathcal{T}$ stays in
\[
\mathcal{A=}\left\{  (X,Y,Z,W)\in\mathbb{R}^{4}:0<X<N-2,\quad0<Y<N-2,\quad
0<Z<N+a,\quad0<W<N+b\right\}  .
\]
\medskip(i) Suppose that $\mathcal{T}$ converges to a fixed point of the
system in $\mathcal{\bar{R}}$. Then the only possible points are $A_{0}%
,P_{0},Q_{0}$ which are effectively on $\mathcal{V}$. Indeed $I_{0},$
$J_{0},G_{0},H_{0}\not \in \mathcal{V}.$ But $Q_{0}=((N-2)\delta
-(2+a),N-2,N+a-(N-2)\delta,0)\not \in \mathcal{\bar{R}}$, since $\delta
>\frac{N+a}{N-2}$. And $P_{0}\in\mathcal{\bar{R}}$ if and only if $\mu
\leqq\frac{N+b}{N-2}$. \medskip

If $\mu>\frac{N+b}{N-2},$ then $\mathcal{T}$ converges to $A_{0}$. If
$\mu<\frac{N+b}{N-2},$ no trajectory converges to $A_{0},$ from Proposition
\ref{cao}, thus $\mathcal{T}$ converges to $P_{0}$. If $\mu\neq\frac{N+b}%
{N-2}$ the convergence is exponential, thus the behaviour of $u,v$ follows. If
$\mu=\frac{N+b}{N-2},$ then $\mathcal{T}$ converges converges to $A_{0,}%
=P_{0};$ the eigenvalues given by (\ref{fgh}) satisfy $\lambda_{1}=\lambda
_{2}=N-2,$ $\lambda_{3}=N+a-\delta(N-2)<0$ and $\lambda_{4}=0;$ the projection
of the trajectory on the hyperplane $Y=N-2$ satisfies the system
\[
X_{t}=X\left[  X-(N-2)+Z\right]  ,\qquad Z_{t}=Z\left[  N+a-\delta
(N-2)-Z\right]
\]
which presents a saddle point at $(N-2,0)$, thus the convergence of $X$ and
$Z$ is exponential, in particular we deduce the behaviour of $u.$ The
trajectory enters by the central variety of dimension $1,$ and by computation
we deduce that $Y-(N-2)=-t^{-1}+O(t^{-2+\varepsilon})$ near $\infty,$ and the
behaviour of $v$ follows.\medskip

\noindent(ii) Let us show that $\mathcal{T}$ converges to a fixed point. We
eliminate $W$ from (\ref{var}) and we get a still quadratic system in
$(X,Y,Z):$%
\begin{equation}
\left\{
\begin{array}
[c]{c}%
X_{t}=X\left[  X-(N-2)+Z\right]  ,\\
Y_{t}=Y\left[  Y+b+2-(\mu+1)X\right]  +\frac{\mu+1}{\delta+1}X(N+a-Z),\\
Z_{t}=Z\left[  N+a-\delta Y-Z\right]  .
\end{array}
\right.  \label{tro}%
\end{equation}
We have $X_{t}\geqq0,$ and $Y_{t}\geqq0$ near $-\infty.$ Suppose that $X$ has
a maximum at $t_{0}$ followed by a minimum at $t_{1}.$ At these times
$X_{tt}=XZ_{t}$ , thus we find $Z_{t}(t_{0})<0<Z_{t}(t_{1}).$ There exists
$t_{2}\in\left(  t_{0},t_{1}\right)  $ such that $Z_{t}(t_{2})=0,$ and $t_{2}$
is a minimum. At this time $Z(t_{2})=N+a-\delta Y(t_{2}),$ $Z_{tt}%
(t_{2})=-\delta(ZY_{t})(t_{2})$ hence
\[
Y_{t}(t_{2})=Y(t_{2})\left[  Y(t_{2})+b+2-\frac{\mu+1}{\delta+1}%
X(t_{2})\right]  <0
\]
and $X_{t}(t_{2})<0,$ hence ($X+Z)(t_{2})<N-2,$ and
\[
N-2-X(t_{2})>Z(t_{2})>N+a-\delta(\frac{\mu+1}{\delta+1}X(t_{2})-b-2)
\]%
\[
(a+2)+\delta(b+2)<(\delta\frac{\mu+1}{\delta+1}-1)X(t_{2})=\frac
{\delta(2+b)+(2+a)}{(N-2)\delta-(2+a)}X(t_{2})
\]
but $X(t_{2})<X(t_{0})<\delta(N-2)-(2+a),$ which is contradictory. Then $X$
has at most one extremum, which is a maximum, and then it has a limit in
$\left(  0,N-2\right]  $ at $\infty.$ In the same way, by symmetry, $Y$ has at
most one extremum, which is a maximum, and has a limit in $\left(
0,N-2\right]  $ at $\infty.$ Then $Z$ has at most one extremum, which is a
minimum. Indeed at the points where $Z_{t}=0,$ $-Z_{tt}$ has the sign of
$Y_{t}$. Thus $Z$ has a limit in $\left[  0,N+a\right)  $, similarly $W$ has a
limit in $\left[  0,N+b\right)  .\medskip$
\end{proof}

\textbf{Open problems: }1) For the case\textbf{ }$\delta=\mu,$ in the critical
case it is well known that there exist solutions $(u,v)$ of system $(SH)$ of
the form $(u,u),$ such that $r^{\gamma}u$ is periodic in $t=\ln r.$ They
correspond to a periodic trajectory for the scalar system $(M_{scal})$ with
$p=2,$ and it admits an infinity of such trajectories. If $\delta\neq\mu,$
does there exist solutions $(u,v)$ such that $(r^{\gamma}u,r^{\xi}v)$ is
periodic in $t,$ in other words a periodic trajectory for system
$(MH)?\medskip$

2)\textbf{ }In the\textbf{ }supercritical case, we cannot prove that the
regular trajectory $\mathcal{T}$ converges to $M_{0},$ that means
$\lim_{r\rightarrow\infty}r^{\gamma}u=A,$ $\lim_{r\rightarrow\infty}r^{\xi
}v=B.$ Here $\mathcal{E}_{H}(0)=0,$ $\mathcal{E}_{H}$ is nonincreasing, then
$\mathcal{E}_{H}$ is negative. The only fixed points of negative energy are
$M_{0},$ $G_{0},H_{0},$ but a G.S. satisfies (\ref{jim}), then it tends to
$(0,0)$ at $\infty,$ hence $\mathcal{T}$ cannot converge to $G_{0}$ or $H_{0}$
from Proposition \ref{gzero}; but we cannot prove that $\mathcal{T}$ converges
to some fixed point.

\section{A nonvariational system\label{NV}}

Here we consider system $(S)$ with $p=q=2,a=b$ and $s=m\neq0.$
\[
(SN)\left\{
\begin{array}
[c]{c}%
-\Delta u=\left\vert x\right\vert ^{a}u^{s}v^{\delta},\\
-\Delta v=\left\vert x\right\vert ^{a}u^{\mu}v^{s},
\end{array}
\right.
\]
where $D=\delta\mu-(1-s)^{2}$ $>0.$ In order to prove Theorem we can reduce
the system to the case $a=0,$ by changing $N$ into $\hat{N}=\frac{2(N+a)}%
{2+a},$ from Remark \ref{duc}; thus we assume $a=0$ in this Section. Here
\[
X=-\frac{ru^{\prime}}{u}\qquad Y=-\frac{rv^{\prime}}{v},\qquad Z(t)=-\frac
{ru^{s}v^{\delta}}{u^{\prime}},\qquad W(t)=-\frac{ru^{\mu}v^{s}}{v^{\prime}},
\]
and system $(M)$ becomes
\[
(MN)\left\{
\begin{array}
[c]{c}%
X_{t}=X\left[  X-(N-2)+Z\right]  ,\\
Y_{t}=Y\left[  Y-(N-2)+W\right]  ,\\
Z_{t}=Z\left[  N-sX-\delta Y-Z\right]  ,\\
W_{t}=W\left[  N-\mu X-sY-W\right]  .
\end{array}
\right.
\]
We have chosen this system because it is not variational, and different
hyperbolas in the plane $(\delta,\mu)$: \medskip

$\bullet$ the hyperbola $\mathcal{H}_{s}$ for which the linearized system at
$M_{0}$ has two imaginary roots, given by
\[
(\mathcal{H}_{s})\qquad\frac{1}{\delta+1-s}+\frac{1}{\mu+1-s}=\frac
{N-2}{N-(N-2)s}%
\]
whenever $s<\frac{N}{N-2},$ and $\delta+1-s>0,$ $\mu+1-s>0,$ from Proposition
\ref{cent};\medskip

$\bullet$ the hyperbola $\mathcal{H}_{0}$ defined by
\begin{equation}
(\mathcal{H}_{0})\qquad\frac{1}{\delta+1}+\frac{1}{\mu+1}=\frac{N-2}{N};
\label{ach}%
\end{equation}
it was shown in \cite{Mi} that above $\mathcal{H}_{0}$ there exists no
solution of the Dirichlet problem;\medskip

$\bullet$ an hyperbola $\mathcal{Z}_{s}$ introduced in \cite{Zh} in case
$s<\frac{N}{N-2},$ and $\min(\delta,\mu)>\left\vert s-1\right\vert :$
\begin{equation}
(\mathcal{Z}_{s})\qquad\frac{1}{\delta+1}+\frac{1}{\mu+1}=\frac{N-2}%
{N-(N-2)s}, \label{ine}%
\end{equation}

$\bullet$ we introduce the new curve $\mathcal{C}_{s}$ defined \underline
{\textit{for any }$s>0$} by
\[
(\mathcal{C}_{s})\qquad\text{ }\frac{N}{\mu+1}+\frac{N}{\delta+1}%
=N-2+\frac{(N-2)s}{2}\min(\frac{1}{\mu+1},\frac{1}{\delta+1}),
\]

We first extend and complete the results of \cite{Zh} and \cite{Mi}:

\begin{proposition}
\label{mac}(i) Assume $s<\frac{N}{N-2},$ and $\delta+1-s>0,$ $\mu+1-s>0.$
Under the hyperbola $\mathcal{Z}_{s},$ system $(SN)$ admits no G.S., and then
there is a solution of the Dirichlet problem in a ball.\medskip

(ii) Above $\mathcal{H}_{0}$ there exists no solution of the Dirichlet
problem. Thus there exists a G.S.\medskip
\end{proposition}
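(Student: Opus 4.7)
The plan is to derive both parts from Theorem \ref{equi}, which converts between the absence of ground states and the existence of radial positive solutions of the Dirichlet problem in a ball. Part (i) receives its G.S.-nonexistence input from \cite{Zh}, while part (ii) receives its Dirichlet-nonexistence input from \cite{Mi}.

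For part (i), I would first invoke the nonexistence of radial G.S.\ of $(SN)$ strictly below $\mathcal{Z}_s$ established in \cite{Zh}. The underlying argument is a weighted Pohozaev identity: one tests the two equations against Euler-type multipliers of the form $\alpha u + ru'$ and $\beta v + rv'$, with constants $\alpha,\beta$ tuned to the diagonal self-interaction exponents $s$, and uses the a priori decay estimates (\ref{jim}) of Proposition \ref{fac} to control the boundary contributions at $0$ and $\infty$. The critical balance of the resulting interior integral occurs precisely on $\mathcal{Z}_s$; strict inequality makes it sign-definite and incompatible with a positive G.S. Once nonexistence is known, I would apply Theorem \ref{equi}(ii). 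After the reduction to $a=0$ via Remark \ref{duc}, the subcriticality hypothesis of the theorem reads $s,m < (N+2)/(N-2)$, which is implied by $s = m < N/(N-2)$. This produces the required radial positive solution of the Dirichlet problem.

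For part (ii), the moving spheres method of \cite{Mi} rules out any positive (radial or nonradial) solution of the Dirichlet problem in a ball when $(\delta,\mu)$ lies above $\mathcal{H}_0$. I would then argue contrapositively through Theorem \ref{equi}(ii): were $(SN)$ to admit no G.S., the theorem would manufacture a radial positive Dirichlet solution, contradicting \cite{Mi}; hence a G.S.\ must exist. The only verification needed here is the subcriticality $s = m < (N+2)/(N-2)$ required by Theorem \ref{equi}, which is inherent to the regime treated in \cite{Mi}.

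The main obstacle, and the only genuinely delicate step, is making the Pohozaev identity in part (i) sharp at $\mathcal{Z}_s$: the nonvariational diagonal terms $u^s, v^s$ destroy the clean symmetric combination available for $(SH)$ in Section \ref{HS}, so the multipliers and weights must be tuned to absorb these self-interactions while preserving sign-definiteness below $\mathcal{Z}_s$, with all boundary contributions rigorously controlled by (\ref{jim}). Once the identity is set up correctly and the subcriticality hypotheses of Theorem \ref{equi} are verified in each regime, the remainder of the argument is organizational.
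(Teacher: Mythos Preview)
Your logical skeleton is correct and matches the paper's: both parts hinge on Theorem \ref{equi}, with (i) feeding in G.S.\ nonexistence below $\mathcal{Z}_s$ and (ii) feeding in Dirichlet nonexistence above $\mathcal{H}_0$. The route differs, however. The paper does not outsource either nonexistence statement: it constructs a single parametrized Pohozaev function
\[
\mathcal{E}_N(r)=r^N\Bigl[u'v'+\alpha u^{\mu+1}v^s+\beta v^{\delta+1}u^s+\tfrac{\sigma}{r}vu'+\tfrac{\theta}{r}uv'\Bigr]
=r^{N-2}uv\,\Psi_0(X,Y,Z,W),
\]
computes $\mathcal{E}_N'$ in the $(X,Y,Z,W)$ variables, and then tunes the parameters twice. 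For (i) one takes $\alpha=\tfrac{1}{\mu+1},\ \beta=\tfrac{1}{\delta+1},\ \theta=\tfrac{N-(N-2)s}{\mu+1},\ \sigma=\tfrac{N-(N-2)s}{\delta+1}$, uses the a priori bounds $X,Y<N-2$ from (\ref{jom}) to force $\mathcal{E}_N'>0$, and kills the boundary term at infinity via $\gamma+\xi>N-2$ (which holds because $\mathcal{Z}_s$ lies under $\mathcal{H}_s$). For (ii) one takes $\theta=N\alpha,\ \sigma=N\beta$ to get $\mathcal{E}_N'<0$ directly, and the Dirichlet boundary contributes $\mathcal{E}_N(R)=R^Nu'(R)v'(R)>0$ by H\"opf. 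So what you describe as ``tuning multipliers to absorb the diagonal self-interactions'' is exactly the point, but the paper carries it out rather than citing \cite{Zh} or \cite{Mi}; this buys a self-contained argument and a clean dependence on the box estimates of Proposition \ref{fac}.

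One small gap in your (ii): the claim that $s=m<\tfrac{N+2}{N-2}$ is ``inherent to the regime treated in \cite{Mi}'' is not right---being above $\mathcal{H}_0$ imposes no bound on $s$. You need to split: for $s\geq\tfrac{N+2}{N-2}$ invoke Proposition \ref{com} (all regular solutions are G.S.), and only for $s<\tfrac{N+2}{N-2}$ appeal to Theorem \ref{equi}. Also a minor attribution point: \cite{Mi} proceeds via a Rellich--Pohozaev identity, not moving spheres (that is \cite{ReZ}, \cite{Z5}).
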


\begin{proof}
(i) We consider an energy function with parameters $\alpha,\beta,\sigma
,\theta:$
\begin{align}
\mathcal{E}_{N}(r)  &  =r^{N}\left[  u^{\prime}v^{\prime}+\alpha u^{\mu
+1}v^{s}+\beta v^{\delta+1}u^{s}+\frac{\sigma}{r}vu^{\prime}+\frac{\theta}%
{r}uv^{\prime}\right] \label{bbb}\\
&  =r^{N-2}uv\Psi_{0}=r^{N-2-\gamma-\xi}(ZX)^{\xi/2}(WZ)^{\gamma/2}\Psi_{0},
\end{align}
from (\ref{for}), where
\begin{equation}
\text{ }\Psi_{0}(X,Y,Z,W)=XY+\alpha WY+\beta ZX-\sigma X-\theta Y. \label{psi}%
\end{equation}
We get
\begin{align*}
r^{1-N}(uv)^{-1}\mathcal{E}_{N}^{\prime}(r)  &  =(\sigma+\theta
-(N-2))XY+(N\alpha-\theta)YW+(N\beta-\sigma)XZ\\
&  -(\alpha(\mu+1)-1)XYW-(\beta(\delta+1)-1)XYZ-\alpha sY^{2}W-\beta sX^{2}Z.
\end{align*}
Taking $\alpha=\frac{1}{\mu+1},\beta=\frac{1}{\delta+1},$ we find
\begin{equation}
r^{3-N}(uv)^{-1}\mathcal{E}_{N}^{\prime}(r)=(\sigma+\theta-(N-2))XY+(N\alpha
-\theta-\alpha sY)YW+(N\beta-\sigma-\beta sX)XZ. \label{stu}%
\end{equation}
If there exists a G.S., from (\ref{jom}) it satisfies $X,Y<N-2,$ hence
\begin{equation}
r^{3-N}(uv)^{-1}\mathcal{E}_{N}^{\prime}(r)>(\sigma+\theta
-(N-2))XY+((N-(N-2)s)\alpha-\theta)YW+((N-(N-2)s)\beta-\sigma)XZ. \label{sto}%
\end{equation}
Taking $\theta=\frac{N-(N-2)s}{\mu+1},\sigma=\frac{N-(N-2)s}{\delta+1},$ we
deduce that $\mathcal{E}_{N}^{\prime}>0$ under $\mathcal{Z}_{s}$. Moreover
$\mathcal{Z}_{s}$ is under $\mathcal{H}_{s},$ thus $\gamma+\xi>N-2.$ Then
$\mathcal{E}_{N}(r)=O(r^{N-2-\gamma-\xi})$ tends to $0$ at $\infty,$ which is
contradictory.\medskip

\noindent(ii) Taking $\alpha=\frac{1}{\mu+1}=\frac{\theta}{N},\beta=\frac
{1}{\delta+1}=\frac{\sigma}{N},$ it comes from (\ref{stu})
\[
r^{3-N}(uv)^{-1}\mathcal{E}_{N}^{\prime}(r)=(\frac{N}{\delta+1}+\frac{N}%
{\mu+1}-(N-2))XY-\alpha sY^{2}W-\beta sX^{2}Z
\]
hence $\mathcal{E}_{N}^{\prime}<0$ when (\ref{ach}) holds. At the value $\ R$
where $u(R)=v(R)=0,$ we find $\mathcal{E}_{N}(R)=R^{N}u^{\prime}(R)v^{\prime
}(R)>0,$ which is a contradiction.
\end{proof}

\begin{remark}
(i)When the four curves are simultaneously defined, they are in the following
order, from below to above: $\mathcal{Z}_{s},\mathcal{H}_{s},\mathcal{C}%
_{s},\mathcal{H}_{0}.$ They intersect the diagonal $\delta=\mu$ repectively
for
\[
\delta=\frac{N+2}{N-2}-2s,\quad\delta=\frac{N+2}{N-2}-s,\quad\delta=\frac
{N+2}{N-2}-\frac{s}{2},\quad\delta=\frac{N+2}{N-2}.
\]

(ii) For $\delta=\mu,$ system $(SN)$ has a G.S. for $\delta\geqq\frac
{N+2}{N-2}-s.$ Indeed it admits solutions of the form $(U,U)$, where $U$ is a
solution of equation $-\Delta U=U^{s+\delta}.$ Suppose moreover $s\leqq
\delta.$ If $1-s<\delta<\frac{N+2}{N-2}-s,$ then there exists no G.S; indeed
all such solutions satisfy $u=v,$ from \cite[Remark 3.3]{B1}. Then the point
$P_{s}=\left(  \frac{N+2}{N-2}-s,\frac{N+2}{N-2}-s\right)  $ appears to be the
separation point on the diagonal; notice that $P_{s}\in\mathcal{H}_{s}%
.$\bigskip
\end{remark}

Next we prove our main existence result of existence of a G.S. valid without
restrictions on $s$. The main idea is to introduce a \underline{\textit{new
energy function }$\Phi$ \textit{by adding two terms in }$X^{2}$\textit{ and
}$Y^{2}$} to the energy $\mathcal{E}_{N}$ defined at (\ref{bbb}). It is
constructed in order that $\Phi^{\prime}$ does not contain $Y$ and $Z.$ Then
we consider the set of couples $(X,Y)$ such that $\Phi^{\prime}$ has a sign,
which is bounded by a cubic curve. When ($\delta,\mu)$ is above $\mathcal{C}%
_{s}$, the cubic curve is exterior to the square
\begin{equation}
K=\left[  0,N-2\right]  \times\left[  0,N-2\right]  , \label{ki}%
\end{equation}
and then we can apply Theorem \ref{equi}.\medskip

\begin{proof}
[Proof of Theorem \ref{cru}]From Theorem \ref{equi}, if $s\geqq\frac{N+2}%
{N-2},$ all the regular solutions are G.S.. Thus we can assume $s<\frac
{N+2}{N-2}.$ Let $j,k\in\mathbb{R}$ be parameters, and
\begin{align*}
\Phi(r)  &  =\mathcal{E}_{N}(r)+r^{N}\left[  k\frac{s}{2}\frac{vu^{\prime2}%
}{u}+j\frac{s}{2}\frac{uv^{\prime2}}{v}\right] \\
&  =r^{N}\left[  u^{\prime}v^{\prime}+\alpha u^{\mu+1}v^{s}+\beta v^{\delta
+1}u^{s}+\frac{\sigma}{r}vu^{\prime}+\frac{\theta}{r}uv^{\prime}+k\frac{s}%
{2}\frac{vu^{\prime2}}{u}+j\frac{s}{2}\frac{uv^{\prime2}}{v}\right] \\
&  =r^{N-2}uv\Psi=r^{N-2-\gamma-\xi}(ZX)^{\xi/2}(WY)^{\gamma/2}\Psi,
\end{align*}
where
\[
\Psi(X,Y,Z,W)=XY+\alpha WY+\beta ZX-\sigma X-\theta Y+k\frac{s}{2}X^{2}%
+j\frac{s}{2}Y^{2}.
\]
Then
\begin{align*}
r^{3-N}(uv)^{-1}\Phi^{\prime}(r)  &  =(\sigma+\theta-(N-2))XY+(N\alpha
-\theta)YW+(N\beta-\sigma)XZ\\
&  -(\alpha(\mu+1)-1)XYW-(\beta(\delta+1)-1)XYZ+(j-\alpha)sY^{2}%
W+(k-\beta)sX^{2}Z\\
&  +ksX^{2}\left[  X-(N-2)\right]  +jsY^{2}\left[  Y-(N-2)\right]
+(N-2-X-Y)(k\frac{s}{2}X^{2}+j\frac{s}{2}Y^{2}).
\end{align*}
We eliminate the terms in $Z,W$ by taking $j=\alpha=\frac{1}{\mu+1},$
$k=\beta=\frac{1}{\delta+1},$ $\theta=N\alpha,$ $\sigma=N\beta.$ Then we get
the function $\Phi$ defined at (\ref{Pon}). Computing its derivative, we
obtain after reduction%
\begin{align*}
\mathcal{B(}X,Y)  &  :=-\frac{2}{s}r^{3-N}(uv)^{-1}\Phi^{\prime}(r)\\
&  =\beta X^{2}(N-2-X)+\alpha Y^{2}(N-2-Y)+XY\left[  \beta X+\alpha Y+\frac
{2}{s}(N-2-N\alpha-N\beta)\right]  .
\end{align*}

From Proposition \ref{mac} we can assume that $N(\alpha+\beta)-(N-2)>0$. We
determine the sign of $\mathcal{B}$ on the boundary $\partial K$ of the square
$K$ defined at (\ref{ki}). We have $\mathcal{B}(0,Y)=\alpha Y^{2}%
(N-2-Y)\geqq0$ and $\mathcal{B}(X,0)=\beta X^{2}(N-2-X)\geqq0$. In particular
$\mathcal{B}(0,0)=0.$ Otherwise $\mathcal{B}(N-2,Y)=Y\Theta(Y)$ with
\[
\Theta(Y)=\alpha Y\left[  2(N-2)-Y\right]  +(N-2)((N-2)\beta+\frac{2}%
{s}(N-2-N\alpha-N\beta).
\]
On the interval $\left[  0,N-2\right]  ,$ there holds $\Theta(Y)>\Theta(0)$.
By hypothesis, $(\delta,\mu)$ is above $\mathcal{C}_{s},$ or equivalently
\begin{equation}
(\alpha+\beta)\frac{N}{N-2}-1\leqq\frac{s}{2}\min(\alpha,\beta); \label{cdm}%
\end{equation}
consequently $\mathcal{B}(N-2,Y)\geqq0$ and similarly $\mathcal{B}%
(X,N-2)\geqq0.$ Then $\mathcal{B}$ is nonnegative on $\partial K$ and is zero
at $(0,0),(0,N-2),(N-2,0)$. The curve $\mathcal{B(}X,Y)=0$ is a cubic with a
double point at $(0,0),$ which is isolated under the condition (\ref{cdm}):
$\mathcal{B(}X,Y)>0$ near $(0,0),$ except at this point. Then $\mathcal{B(}%
X,Y)>0$ on the interior of $K.\medskip$

Suppose that there exists a regular solution such that $X(T)=Y(T)=N-2$ at the
same time $T.$ Indeed up to this time $(X,Y)$ stays in $K$, thus the function
$\Phi$ is decreasing. We have $\Phi(0)=0,$ and at the value $R=e^{T},$ we
find
\[
\Phi(R)=R^{N-2-\gamma-\xi}(N-2)^{\xi+\gamma+2}\left[  \frac{\alpha W+\beta
Z}{N-2}+1-(\beta+\alpha)(\frac{N}{N-2}-\frac{s}{2})\right]
\]
then $\Phi(R)>0$, since $\min(\alpha,\beta)<\alpha+\beta$. Therefore from
Theorem \ref{equi}, there exists a G.S.\medskip
\end{proof}

\begin{remark}
We wonder if the limit curve for existence of G.S. would be $\mathcal{H}_{s}$,
or another curve $\mathcal{L}_{s}$ defined by
\[
(\mathcal{L}_{s})\qquad\frac{1}{\delta+1}+\frac{1}{\mu+1}=\frac{N-2}%
{N-\frac{(N-2)s}{2}},
\]
which ensures that $\Phi(R)>0,$ and also $\mathcal{B}(N-2,N-2)>0.$ This curve
cuts the diagonal at the same point $P_{s}$ $=\left(  \frac{N+2}{N-2}%
-s,\frac{N+2}{N-2}-s\right)  $ as $\mathcal{H}_{s}.$ Notice that
$\mathcal{L}_{s}$ is under $\mathcal{H}_{s}.$
\end{remark}

\section{The radial potential system\label{RP}}

Here we study the nonnegative radial solutions of system $(SP):$%
\[
(SP)\left\{
\begin{array}
[c]{c}%
-\Delta_{p}u=\left\vert x\right\vert ^{a}u^{s}v^{m+1},\\
-\Delta_{q}v=\left\vert x\right\vert ^{a}u^{s+1}v^{m},
\end{array}
\right.
\]
with $a=b,\delta=m+1,\mu=s+1,$ and we assume (\ref{ht}). System $(M)$ becomes%
\[
(MP)\left\{
\begin{array}
[c]{c}%
X_{t}=X\left[  X-\frac{N-p}{p-1}+\frac{Z}{p-1}\right]  ,\\
Y_{t}=Y\left[  Y-\frac{N-q}{q-1}+\frac{W}{q-1}\right]  ,\\
Z_{t}=Z\left[  N+a-sX-(m+1)Y-Z\right]  ,\\
W_{t}=W\left[  N+b-(s+1)X-mY-W\right]  .
\end{array}
\right.
\]
For this system $D,$ $\gamma$ and $\xi$ are defined by%
\[
D=p(1+m)+q(1+s)-pq,\qquad(p-1-s)\gamma+p+a=(m+1)\xi,\qquad(q-1-m)\xi
+q+b=(s+1)\gamma,
\]
thus $\gamma$ and $\xi$ are linked independtly of $s,m$ by the relation
\begin{equation}
p(\gamma+1)=q(\xi+1)=\frac{pq(m+s+2+a)}{D}. \label{sli}%
\end{equation}
The system \thinspace is \textit{variational}. It admits an energy function,
given at (\ref{ppo}), which can also can be obtained by a direct computation
in terms of $X,Y,Z,W$:%

\begin{equation}
\mathcal{E}_{P}(r)=\psi\left[  ZW-\frac{s+1}{p}W((N-p)-(p-1)X)-\frac{m+1}%
{q}Z((N-q)-(q-1)Y)\right]  , \label{ene}%
\end{equation}
where
\[
\psi=\frac{r^{N-2-a}\left\vert u^{\prime}\right\vert ^{p-1}\left\vert
v^{\prime}\right\vert ^{q-1}}{u^{s}v^{m}}=r^{N-(\gamma+1)p}\left[
X^{q(s+1)(p-1)}Y^{p(m+1)(q-1)}Z^{p(q-m-1)}W^{q(p-s-1)}\right]  ^{1/D}.
\]
Then we find
\[
\mathcal{E}_{P}^{\prime}(r)=(N+a-(s+1)\frac{N-p}{p}-(m+1)\frac{N-q}%
{q})r^{N-1+a}u^{s+1}v^{m+1}.
\]
Thus we define a critical line $\mathcal{D}$ as the set of $\left(  \delta
,\mu\right)  =(m+1,s+1)$ such that
\begin{equation}
N+a=(m+1)\frac{N-q}{q}+\left(  s+1\right)  \frac{N-p}{p}, \label{cho}%
\end{equation}
equivalent to $pq(m+s+2+a)=ND,$ or $N+a=(m+1)\xi+(s+1)\gamma,$ or
\[
(\gamma,\xi)=(\frac{N-p}{p},\frac{N-q}{q})
\]
The subcritical case is given by the set of points under $\mathcal{D}$,
equivalently $\gamma>\frac{N-p}{p}$, $\xi>\frac{N-q}{q}$ or $\left(
s+1\right)  \gamma+(m+1)\xi>N+a.$ The supercritical case is the set of points
above $\mathcal{D}.$\medskip

\begin{remark}
The energy $\mathcal{(\mathcal{E}_{P})}_{0}$ of the particular solution
associated to $M_{0}$ is still negative$:$ $(\mathcal{E}_{P}\mathcal{)}%
_{0}=-\frac{D}{pq}r^{N+a-(\gamma+1)p}\left[  X_{0}^{q(p-1)}Y_{0}^{p(q-1)}%
Z_{0}^{q(s+1)}W_{0}^{p(m+1)}\right]  ^{1/D}.$\medskip
\end{remark}

\begin{remark}
\label{bah}When $p=q=2$, another energy function can be associated to the
transformation given at Remark \ref{vauv}: the system (\ref{uvs}) relative to
$u(r)=r^{-\gamma}$\textrm{U}$(t),\quad v(r)=r^{-\xi}$\textrm{V}$(t)$ is
\begin{equation}
\left\{
\begin{array}
[c]{c}%
\mathrm{U}_{tt}+(N-2-2\gamma)\mathrm{U}_{t}-\gamma(N-2-\gamma)\mathrm{U}%
+\mathrm{U}^{s}\mathrm{V}^{m+1}=0\\
\mathrm{V}_{tt}+(N-2-2\gamma)\mathrm{V}_{t}-\gamma(N-2-\gamma)\mathrm{V}%
+\mathrm{U}^{s+1}\mathrm{V}^{m}=0
\end{array}
\right.  \label{hhh}%
\end{equation}
and the function
\begin{equation}
E_{P}(t)=\frac{s+1}{2}(\mathrm{U}_{t}^{2}-\gamma(N-2-\gamma)\mathrm{U}%
^{2})+\frac{m+1}{2}(\mathrm{V}_{t}^{2}-\gamma(N-2-\gamma)\mathrm{V}%
^{2}+\mathrm{U}^{s+1\mathrm{V}m+1} \label{vvv}%
\end{equation}
satisfies%
\[
(E_{P})_{t}=-(N-2-2\gamma)\left[  (s+1)\mathrm{U}_{t}^{2}+(m+1)\mathrm{V}%
_{t}^{2}\right]
\]
It differs from $\mathcal{E}_{P},$ even in the critical case. This point is
crucial for Section \ref{nonradial}.\medskip
\end{remark}

It has been proved in \cite{TV}, \cite{TV2}, that in the subcritical case with
$a=0$, there exists a solution of the Dirichlet problem in any bounded regular
domain $\Omega$ of $\mathbb{R}^{N};$ and in the supercritical case there
exists no solution if $\Omega$ is starshaped. Here we prove two results of
existence or nonexistence of G.S. which seem to be new:\medskip

\begin{proof}
[Proof of Theorem \ref{clo}]1) \textit{Existence or nonexistence
results.}\medskip

$\bullet$\textit{ }In the supercritical or critical case there exists a G.S.
From Theorem \ref{equi}, if it were not true, then \textbf{ }there would exist
regular positive solutions of $(MP)$ such that $X(T)=\frac{N-p}{p-1}$ and
$Y(T)=\frac{N-q}{q-1}$. It would satify $\mathcal{E}_{P}\leqq0.$ Then at time
$T,$ we find $\mathcal{E}_{P}(R)>0,$ from (\ref{ene}), since $W>0,Z>0,$ which
is impossible.\medskip

$\bullet$\textit{ }In the subcritical case, there exists no G.S. Suppose that
there exists one. Now $\mathcal{E}_{P}$ is nondecreasing, hence $\mathcal{E}%
_{P}\geqq0.$ Its trajectory stays in the box $\mathcal{A}$ defined by
(\ref{jom}), thus it is bounded. If $q\geqq m+1$ and $p\geqq s+1,$ we deduce
that , $\mathcal{E}_{P}(r)=O(r^{N-(\gamma+1)p})$ from (\ref{ene}), then
$\mathcal{E}_{P}$ tends to $0$ at $\infty,$ which is contradictory. Next
consider the general case. We have
\begin{align*}
\mathcal{E}_{P}(r)  &  \leqq r^{N-(\gamma+1)p}\left[  X^{q(p-1)}%
Y^{p(q-1)}Z^{p(q-m-1)}W^{q(p-s-1)}\right]  ^{1/D}ZW\\
&  =r^{N-(\gamma+1)p}\left[  X^{q(p-1)}Y^{p(q-1)}Z^{q(1+s)}W^{p(1+m)}\right]
^{1/D},
\end{align*}
then the same result holds. Consequently, from Theorem \ref{equi}, there
exists a solution of the Dirichlet problem\medskip\ 

2) \textit{Behaviour of the }$G.S.$\textit{ in the critical case. }\medskip

Let $\mathcal{T}$ be the trajectory of a G.S.; then $\mathcal{E}_{P}(0)=0,$
thus $\mathcal{T}$ lies on the variety $\mathcal{V}$ of energy $0$, also
defined by
\begin{equation}
qW\left[  (s+1)((p-1)X-(N-p))+pZ\right]  =p(m+1)Z\left[  (N-q)-(q-1)Y)\right]
\label{war}%
\end{equation}
and $Y<\frac{N-q}{q-1},$ hence $(s+1)((p-1)X-(N-p))+pZ>0$. From (\ref{mil}),
$\mathcal{T}$ starts from $N_{0}=(0,0,N+a,N+b)$ and stays in $\mathcal{A}$.
Eliminating $W$ in system $(M),$ we find a system of three equations%
\[
\left\{
\begin{array}
[c]{c}%
X_{t}=X\left[  X-\frac{N-p}{p-1}+\frac{Z}{p-1}\right]  ,\\
Y_{t}=YF,\\
Z_{t}=Z\left[  N+a-sX-(m+1)Y-Z\right]  ,
\end{array}
\right.
\]
where
\[
F(X,Y,Z)=\frac{1}{q}\left[  \frac{N-q}{q-1}-Y\right]  \frac
{p(m+1-q)Z+q(s+1)((N-p)-(p-1)X)}{(s+1)((p-1)X-(N-p))+pZ}.
\]
(i) If $\mathcal{T}$ converges to a fixed point of the system in
$\mathcal{\bar{R}}$, the possible points on $\mathcal{V}$ are $A_{0}%
,I_{0},J_{0},$ $P_{0},$ $Q_{0},G_{0},H_{0},$ $R_{0},S_{0}.$ The eigenvalues of
the linearized problem at $A_{0},$ given by (\ref{fgh}) satisfy
\[
\lambda_{1},\lambda_{2}>0,\lambda_{3}=N+a-s\frac{N-p}{p-1}-(m+1)\frac
{N-q}{q-1}\leqq\lambda_{4}=\lambda^{\ast}=N+a-(s+1)\frac{N-p}{p-1}-m\frac
{N-q}{q-1},
\]
since $q\leqq p,$ and $\lambda_{3}<\lambda^{\ast}$ for $q\neq p,$ and
$\lambda_{3}=\lambda^{\ast}<0$ for $q=p,$ from (\ref{cho}). Then $A_{0}$ can
be attained only when $\lambda^{\ast}\leqq0,$ from Proposition \ref{cao}. And
$P_{0}$ can be attained only if
\begin{equation}
q>m+1,\;\lambda^{\ast}\geqq0\text{ }\;\text{and }\;q+a<(s+1)\frac{N-p}{p-1},
\label{aupe}%
\end{equation}
from Proposition \ref{cpo}, because $\gamma=\frac{N-p}{p}<\frac{N-p}{p-1}.$ We
observe that the condition $\lambda^{\ast}\geqq0$ joint to (\ref{cho}) implies
$m+1<q<p$ and is equivalent to (\ref{aupe}). Indeed it implies
\[
\frac{N-p}{p-1}(s+1)\leqq N+a-m\frac{N-q}{q-1}=N+a-\frac{q}{q-1}%
(N+a-\frac{N-q}{q}-\left(  s+1\right)  \frac{N-p}{p});
\]
then
\[
\left(  s+1\right)  \frac{N-p}{p-1}\frac{q-p}{p}\leqq-(a+q),
\]
thus $q<p.$ From (\ref{cho}) we obtain%
\[
(N-q)(\frac{m+1}{q}-1)=q+a-(s+1)\frac{N-p}{p}\leqq\left(  s+1\right)
\frac{N-p}{p}(\frac{p-q}{p-1}-1)<0,
\]
hence $m+1<q$ and (\ref{aupe}) follows. By symmetry, $Q_{0}$ cannot be
attained since $q\leqq p.$ Then $A_{0}$ and $P_{0}$ are incompatible, unless
$A_{0}=P_{0}$, and $P_{0}$ is not attained when $p=q.\medskip$

\noindent(ii) Next we show that $\mathcal{T}$ converges to $A_{0}$ or to
$P_{0}$. If $t$ is an extremum value of $Y$, then
\begin{equation}
(\frac{m+1}{q}-1)Z(t)+\frac{s+1}{p}((N-p)-(p-1)X(t))=0. \label{loi}%
\end{equation}
This relation implies $q>m+1$ and
\[
X_{t}(t)=\frac{X(t)Z(t)}{p-1}\left[  1+\frac{p(m+1-q)}{q(s+1)}\right]
=\frac{DX(t)Z(t)}{(p-1)q(s+1)}>0.
\]
In the same way, if $t$ is an extremum value of $X,$ then $p>s+1$ and
$Y_{t}(t)>0.$ Near $-\infty,$ there holds $X_{t},Y_{t}\geqq0,$ and
$Z_{t},W_{t}\leqq0,$ from the linearization near $N_{0}.$ Suppose that $X$ has
a maximum at $t_{0}$ followed by a minimum at $t_{1}.$ Then $p>s+1,$ and $Y$
is increasing on $\left[  t_{0},t_{1}\right]  $. At time $t_{0}$ we have
$(p-1)X(t_{0})+Z(t_{0})=N-p$ and $X_{tt}(t_{0})\leqq0,$ thus $Z_{t}%
(t_{0})\leqq0;$ eliminating $Z$ we deduce $p+a+(p-1-s)X(t_{0})\leqq
(m+1)Y(t_{0})$ and similarly $(m+1)Y(t_{1})\leqq p+a+(p-1-s)X(t_{1});$ hence
$Y(t_{1})<Y(t_{0})$, which is a contradiction. Thus $X$ and $Y$ can have at
most one maximum, and in turn they have no maximum point. Therefore $X$ and
$Y$ are increasing, and they are bounded, hence $X$ has a limit in $\left(
0,\frac{N-p}{p-1}\right]  $ and $Y$ has a limit in $\left(  0,\frac{N-q}%
{q-1}\right]  $. Then $Z,W$ are decreasing; indeed at each time where
$Z_{t}=0,$ we have $Z_{tt}=Z(-sX_{t}-(m+1)Y_{t})<0,$ thus it is a maximum,
which is impossible.\medskip

Then $\mathcal{T}$ converges to a fixed point of the system. Moreover, since
$X$ and $Y$ are increasing, it cannot be one of the points $I_{0},J_{0}%
,G_{0},H_{0},R_{0},S_{0}.$ It is necessarily $A_{0}$ or $P_{0}.$ We
distinguish two cases:\medskip\ 

$\bullet$ Case $q\leqq m+1$. Then $\mathcal{T}$ converges to $A_{0},$ and
$\lambda_{3},\lambda^{\ast}<0,$ then (\ref{iu}) follows.\medskip

$\bullet$ Case $q>m+1.$ Then $\mathcal{T}$ converges to $A_{0}$ (resp.
$P_{0})$ when $\lambda^{\ast}\leqq0$ (resp. $\lambda^{\ast}\geqq0)$. If the
inequalities are strict, we deduce the convergence of $u$ and $v$ from
Propositions \ref{cao} and \ref{cpo}, and (\ref{id}) follows. If
$\lambda^{\ast}=0,$ then $P_{0}=A_{0},$ and $\lambda_{3}=\frac{(N-1)(q-p)}%
{(p-1)(q-1)}<0.$ The projection of the trajectory $\mathcal{T}$ in
$\mathbb{R}^{3}$ on the plane $Y=\frac{N-q}{q-1}$ satisfies the system
\[
X_{t}=X\left[  X-\frac{N-p}{p-1}+\frac{Z}{p-1}\right]  ,\qquad Z_{t}=Z\left[
N+a-sX-(m+1)\frac{N-q}{q-1}-Z\right]
\]
which presents a saddle point at $(\frac{N-p}{p-1},0)$, thus the convergence
of $X$ and $Z$ is exponential, in particular we deduce the behaviour of $u.$
The trajectory enters by the central variety of dimension $1,$ and by
computation we deduce that $Y=\frac{N-q}{q-1}-\frac{1}{q-1-m}t^{-1}%
+O(t^{-2+\varepsilon}),$ then (\ref{it}) follows.
\end{proof}

\section{The nonradial potential system of Laplacians\label{nonradial}}

Here we study the possibly \underline{\textit{nonradial}} solutions of the
system of the preceeding Section when $p=q=2:$%
\[
(SL)\left\{
\begin{array}
[c]{c}%
-\Delta u=\left\vert x\right\vert ^{a}u^{s}v^{m+1},\\
-\Delta v=\left\vert x\right\vert ^{a}u^{s+1}v^{m},
\end{array}
\right.
\]
with $D=s+m.$ We solve an open problem of \cite{BR}: the nonexistence of
(radial or nonradial) G.S. under condition (\ref{naj}). \medskip

It was shown in \cite{BR} in the case $N+a\geqq4.$ The problem was open when
$N+a<4,$ and $m+s+1>(N+a)/(N-2),$ which implies $N<6.$ Indeed in the case
$m+s+1\leqq(N+a)/(N-2),$ there are no solutions of the exterior problem, see
\cite[Theorem 5.3]{BPo}. Recall that the main result of \cite{BR} is the
obtention of apriori estimates near $0$ or $\infty,$ by using the Bernstein
technique introduced in \cite{GiSp} and improved in \cite{BVe}. Then the
behaviour of the solutions is obtained by using the change of unknown
\[
u(r,\theta)=r^{-\gamma}\mathrm{U}(t,\theta),\qquad v(r,\theta)=r^{-\gamma
}\mathrm{V}(t,\theta),\qquad t=\ln r,
\]
extending the transformation of Remark \ref{bah} to the nonradial case (in
fact here $t$ is $-t$ in \cite{BR}$)$; it leads to the system
\begin{align*}
\mathrm{U}_{tt}+(N-2-2\gamma)\mathrm{U}_{t}+\Delta_{S}\mathrm{U}%
-\gamma(N-2-\gamma)\mathrm{U}+\mathrm{U}^{s}\mathrm{V}^{m+1}  &  =0,\\
\mathrm{V}_{tt}+(N-2-2\gamma)\mathrm{V}_{t}+\Delta_{S}\mathrm{V}%
-\gamma(N-2-\gamma)\mathrm{V}+\mathrm{U}^{s+1}\mathrm{V}^{m}  &  =0,
\end{align*}
where $\Delta_{S}$ is the Laplace-Beltrami operator on $S_{N-1}.$ A
corresponding energy is introduced in \cite{BR}:
\begin{align*}
E_{L}(t)  &  =\frac{s+1}{2}%
{\displaystyle\int\limits_{S^{N-1}}}
(\mathrm{U}_{t}^{2}-\left\vert \nabla_{S}\mathrm{U}\right\vert ^{2}%
-\gamma(N-2-\gamma)\mathrm{U}^{2})d\theta\\
&  +\frac{m+1}{2}%
{\displaystyle\int\limits_{S^{N-1}}}
(\mathrm{V}_{t}^{2}-\left\vert \nabla_{S}\mathrm{V}\right\vert ^{2}%
-\xi(N-2-\xi)\mathrm{V}^{2})d\theta+%
{\displaystyle\int\limits_{S^{N-1}}}
\mathrm{U}^{s+1}\mathrm{V}^{m+1}d\theta,
\end{align*}
extending (\ref{vvv}) to the nonradial case; it satisfies
\[
(E_{L})_{t}=-(N-2-2\gamma)%
{\displaystyle\int\limits_{S^{N-1}}}
\left[  (s+1)\mathrm{U}_{t}^{2}+(m+1)\mathrm{V}_{t}^{2}\right]  d\theta
\]
Here we construct \underline{\textit{another energy function}}, extending the
Pohozaev function defined at (\ref{ppo}) to the nonradial case.

\begin{lemma}
Consider the function $\mathcal{E}_{L}(r)$ defined by
\begin{align*}
r^{-N}\mathcal{E}_{L}(r)  &  =\frac{s+1}{2}%
{\displaystyle\int\limits_{S^{N-1}}}
\left[  u_{r}^{2}-r^{-2}\left\vert \nabla_{S}u\right\vert ^{2}+(N-2)\frac
{uu_{r}}{r}\right]  d\theta\\
&  +\frac{m+1}{2}%
{\displaystyle\int\limits_{S^{N-1}}}
\left[  (\frac{\partial v}{\partial\nu})^{2}-r^{-2}\left\vert \nabla
_{S}v\right\vert ^{2}+(N-2)\frac{vv_{r}}{r}\right]  d\theta+r^{a}%
{\displaystyle\int\limits_{S^{N-1}}}
u^{s+1}v^{m+1}d\theta.
\end{align*}
Then the following relation holds:
\[
r^{1-N}\mathcal{E}_{L}^{\prime}(r)=(N+a-(s+1)\frac{N-2}{2}-(m+1)\frac{N-2}%
{2})r^{a}%
{\displaystyle\int\limits_{S^{N-1}}}
u^{s+1}v^{m+1}d\theta.
\]

\end{lemma}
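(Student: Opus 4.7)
The plan is to prove this identity by direct differentiation of $\mathcal{E}_L(r)$, using the two PDEs $-\Delta u = r^a u^s v^{m+1}$, $-\Delta v = r^a u^{s+1} v^m$ in the form $u_{rr}+\frac{N-1}{r}u_r+r^{-2}\Delta_S u = -r^a u^s v^{m+1}$ and the analogous relation for $v$. This is a nonradial analogue of the classical Pohozaev computation carried out on each slice $\partial B_r$, and no new a priori estimate is required.

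First I would introduce the three quantities
\[
A(r)=r^{N}\!\int_{S^{N-1}}u_r^{2}\,d\theta,\quad B(r)=r^{N-2}\!\int_{S^{N-1}}|\nabla_S u|^{2}\,d\theta,\quad C(r)=r^{N-1}\!\int_{S^{N-1}}u\,u_r\,d\theta,
\]
and their analogues $A_v,B_v,C_v$ for $v$, so that
\[
\mathcal{E}_L(r)=\tfrac{s+1}{2}\bigl[A-B+(N-2)C\bigr]+\tfrac{m+1}{2}\bigl[A_v-B_v+(N-2)C_v\bigr]+r^{N+a}\!\int_{S^{N-1}}u^{s+1}v^{m+1}\,d\theta.
\]
I would then compute $A'$, $B'$, $C'$ directly. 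For $A'$, after using the equation to substitute for $u_{rr}$, one gets three terms: $-(N-2)r^{N-1}\!\int u_r^2\,d\theta$, $-2r^{N+a}\!\int u^s v^{m+1}u_r\,d\theta$, and $-2r^{N-2}\!\int u_r\Delta_S u\,d\theta$. For $B'$, integration by parts on $S^{N-1}$ gives $(N-2)r^{N-3}\!\int|\nabla_S u|^2\,d\theta-2r^{N-2}\!\int u_r\Delta_S u\,d\theta$. For $C'$, using the equation to substitute $u u_{rr}$ and integrating by parts yields $r^{N-1}\!\int u_r^2\,d\theta-r^{N-1+a}\!\int u^{s+1}v^{m+1}\,d\theta+r^{N-3}\!\int|\nabla_S u|^2\,d\theta$.

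The key algebraic miracle is that in the combination $A'-B'+(N-2)C'$ the bulk terms in $u_r^2$, $|\nabla_S u|^2$ and $u_r\Delta_S u$ all cancel exactly, leaving only the boundary-type terms
\[
\tfrac{d}{dr}\!\bigl[A-B+(N-2)C\bigr]=-2r^{N+a}\!\int u^{s}v^{m+1}u_r\,d\theta-(N-2)r^{N-1+a}\!\int u^{s+1}v^{m+1}\,d\theta,
\]
and similarly with $u\leftrightarrow v$, $s\leftrightarrow m$. Multiplying these by $\tfrac{s+1}{2}$ and $\tfrac{m+1}{2}$ and adding, the terms $(s+1)u^{s}v^{m+1}u_r+(m+1)u^{s+1}v^{m}v_r=\partial_r(u^{s+1}v^{m+1})$ precisely cancel against the $\partial_r$-term produced by differentiating $r^{N+a}\!\int u^{s+1}v^{m+1}\,d\theta$. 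What remains is
\[
\mathcal{E}_L'(r)=\Bigl[N+a-\tfrac{(s+1)(N-2)}{2}-\tfrac{(m+1)(N-2)}{2}\Bigr]r^{N-1+a}\!\int_{S^{N-1}}u^{s+1}v^{m+1}\,d\theta,
\]
which, after multiplying by $r^{1-N}$, is exactly the stated formula.

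The step I expect to require the most care is the book-keeping of the three terms arising from $A'$, $B'$, $(N-2)C'$: one must verify that the coefficient of $r^{N-1}\!\int u_r^2\,d\theta$ vanishes, that $|\nabla_S u|^2$ drops out via the combination $-B'+(N-2)C'$, and that the two $\int u_r\Delta_S u\,d\theta$ contributions coming from $A'$ and $-B'$ cancel with opposite signs. Once this bookkeeping is verified, the weights $(s+1)/2$ and $(m+1)/2$ in front of the two Pohozaev-type brackets are forced precisely by the potential structure $(s+1)u^{s}v^{m+1}u_r+(m+1)u^{s+1}v^{m}v_r=\partial_r(u^{s+1}v^{m+1})$, and nothing further is needed.
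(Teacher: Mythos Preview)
Your proof is correct and complete: the decomposition into $A,B,C$, the use of the equation $u_{rr}+\tfrac{N-1}{r}u_r+r^{-2}\Delta_S u=-r^a u^s v^{m+1}$ to compute $A',B',C'$, the cancellations in $A'-B'+(N-2)C'$, and the final recombination via $(s+1)u^s v^{m+1}u_r+(m+1)u^{s+1}v^m v_r=\partial_r(u^{s+1}v^{m+1})$ all check out exactly as you describe.

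The paper's proof is the same computation organized differently: it passes to the logarithmic variable $t=\ln r$, writes the equation in the two equivalent forms $u_{tt}+(N-2)u_t+\Delta_S u+e^{(2+a)t}u^s v^{m+1}=0$ and $(e^{(N-2)t}u_t)_t+e^{(N-2)t}\Delta_S u+e^{(N+a)t}u^s v^{m+1}=0$, multiplies the first by $(s+1)e^{(N-2)t}u_t$ and the second by $u$, and adds (together with the symmetric identities for $v$). Your approach stays in the $r$-variable and tracks three building blocks $A,B,C$ separately, which makes the cancellations slightly more visible; the paper's approach bundles the same cancellations into the two multiplier identities. Neither method requires any ingredient the other does not, and both are standard Pohozaev bookkeeping.
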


\begin{proof}
In terms of $t,$ we find%
\begin{align*}
\mathcal{E}_{L}(t)  &  =\mathcal{E}_{L,1}(t)+\mathcal{E}_{L,2}(t)+\mathcal{E}%
_{L,3}(t),\text{ with }\\
\mathcal{E}_{L,1}(t)  &  =\frac{s+1}{2}e^{(N-2)t}%
{\displaystyle\int\limits_{S^{N-1}}}
\left[  u_{t}^{2}-\left\vert \nabla_{S}u\right\vert ^{2}+(N-2)uu_{t}\right]
d\theta,\\
\mathcal{E}_{L,2}(t)  &  =\frac{m+1}{2}e^{(N-2)t}%
{\displaystyle\int\limits_{S^{N-1}}}
\left[  v_{t}^{2}-\left\vert \nabla_{S}v\right\vert ^{2}+(N-2)vv_{t}\right]
d\theta,\quad\mathcal{E}_{L,3}(t)=e^{(N+a)t}%
{\displaystyle\int\limits_{S^{N-1}}}
u^{s+1}v^{m+1}d\theta,
\end{align*}
and $u$ satisfies the equations
\begin{align}
u_{tt}+(N-2)u_{t}+\Delta_{S}u+e^{(2+a)t}u^{s}v^{m+1}  &  =0,\label{qua}\\
(e^{(N-2)t}u_{t})_{t}+e^{(N-2)t}\Delta_{S}u+e^{(N+a)t}u^{s}v^{m+1}  &  =0,
\label{quad}%
\end{align}
and $v$ satisfies symmetrical equations. Multiplying (\ref{quad}) by $u$ and
(\ref{qua}) by $(s+1)e^{(N-2)t}u_{t},$ we obtain
\begin{align*}
0  &  =%
{\displaystyle\int\limits_{S^{N-1}}}
u(e^{(N-2)t}u_{t})_{t}+e^{(N-2)t}%
{\displaystyle\int\limits_{S^{N-1}}}
u\Delta_{S}u+e^{(N+a)t}%
{\displaystyle\int\limits_{S^{N-1}}}
u^{s+1}v^{m+1}\\
&  =\frac{d}{dt}%
{\displaystyle\int\limits_{S^{N-1}}}
ue^{(N-2)t}u_{t}-e^{(N-2)t}%
{\displaystyle\int\limits_{S^{N-1}}}
(u_{t}^{2}+\left\vert \nabla_{S}u\right\vert ^{2})+e^{(N+a)t}%
{\displaystyle\int\limits_{S^{N-1}}}
u^{s+1}v^{m+1}%
\end{align*}%
\begin{align*}
&  \frac{d}{dt}%
{\displaystyle\int\limits_{S^{N-1}}}
\frac{s+1}{2}(N-2)ue^{(N-2)t}u_{t}-\frac{s+1}{2}(N-2)e^{(N-2)t}%
{\displaystyle\int\limits_{S^{N-1}}}
(u_{t}^{2}+\left\vert \nabla_{S}u\right\vert ^{2})\\
&  =-\frac{s+1}{2}(N-2)e^{(N+a)t}%
{\displaystyle\int\limits_{S^{N-1}}}
u^{s+1}v^{m+1},
\end{align*}
and symmetrically for $v,$ and adding the equalities we deduce%
\begin{align*}
0  &  =(s+1)(e^{(N-2)t}\frac{d}{dt}%
{\displaystyle\int\limits_{S^{N-1}}}
(\frac{u_{t}^{2}-\left\vert \nabla_{S}u\right\vert ^{2}}{2}+(N-2)e^{(N-2)t}%
{\displaystyle\int\limits_{S^{N-1}}}
u_{t}^{2}\\
&  +(m+1)(e^{(N-2)t}\frac{d}{dt}%
{\displaystyle\int\limits_{S^{N-1}}}
\frac{v_{t}^{2}-\left\vert \nabla_{S}v\right\vert ^{2}}{2}+(N-2)e^{(N-2)t}%
{\displaystyle\int\limits_{S^{N-1}}}
v_{t}^{2}\\
&  +\frac{d}{dt}(e^{(N+a)t}%
{\displaystyle\int\limits_{S^{N-1}}}
u^{s+1}v^{m+1})-(N+a)e^{(N+a)t}%
{\displaystyle\int\limits_{S^{N-1}}}
u^{s+1}v^{m+1}%
\end{align*}%
\begin{align*}
&  \frac{d}{dt}\left[  \frac{e^{(N-2)t}}{2}%
{\displaystyle\int\limits_{S^{N-1}}}
((s+1)(u_{t}^{2}-\left\vert \nabla_{S}u\right\vert ^{2})+(m+1)(v_{t}%
^{2}-\left\vert \nabla_{S}v\right\vert ^{2}))+e^{(N+a)t}%
{\displaystyle\int\limits_{S^{N-1}}}
u^{s+1}v^{m+1}\right] \\
&  +\frac{N-2}{2}e^{(N-2)t}%
{\displaystyle\int\limits_{S^{N-1}}}
((s+1)(u_{t}^{2}+\left\vert \nabla_{S}u\right\vert ^{2})+(m+1)(v_{t}%
^{2}+\left\vert \nabla_{S}v\right\vert ^{2}))\\
&  =(N+a)e^{(N+a)t}%
{\displaystyle\int\limits_{S^{N-1}}}
u^{s+1}v^{m+1},
\end{align*}
hence
\[
(\mathcal{E}_{L}\mathcal{)}_{t}(t)=(N+a-(s+1)\frac{N-2}{2}-(m+1)\frac{N-2}%
{2})e^{(N+a)t}%
{\displaystyle\int\limits_{S^{N-1}}}
u^{s+1}v^{m+1}d\theta.
\]
\medskip
\end{proof}

\begin{proof}
[Proof of Theorem \ref{solve}]Suppose that there exists a G.S. Since
$s+m+1<(N+2+2a)/(N-2)$ we deduce that $E_{L}$ and $\mathcal{E}_{L}$ are
increasing and start from 0, then they stay positive. From \cite[Corollary
6.4]{BR}, since $s+m+1<(N+2)/(N-2),$ three eventualities can hold. The first
one is that $(u,v)$ behaves like the particular solution $(u_{0},v_{0});$ it
cannot hold because $E_{L}$ has a negative limit, see \cite[Remark 6.3]{BR}.
The second one is that $(u,v)$ is regular at $\infty,$ that means
$\lim_{\left\vert x\right\vert \rightarrow\infty}\left\vert x\right\vert
^{N-2}u=\alpha>0,$ $\lim_{\left\vert x\right\vert \rightarrow\infty}\left\vert
x\right\vert ^{N-2}v=\beta>0;$ it cannot hold because $\lim_{t\rightarrow
\infty}E_{L}(t)=0.$ It remains a third eventuality: when for example
$m>(N+a)/(N-2)$, and $(u,v)$ has the following behaviour at $\infty:$
\begin{equation}
\lim_{r\rightarrow\infty}u=\alpha>0,\text{ and }\lim_{\left\vert x\right\vert
\rightarrow\infty}\left\vert x\right\vert ^{k}v=\beta>0\text{ or }0,\text{
\quad with }k=(2+a)/(m-1). \label{vict}%
\end{equation}
The condition on $m$ implies that $N<4-a$ from assumption (\ref{naj})$.$ In
that case $\lim_{t\rightarrow\infty}E_{L}(t)=\infty,$ which gives no
contradiction. Here we show that a contradiction holds by using the new energy
function $\mathcal{E}_{L}.\medskip$

First recall the proof of (\ref{vict}). Making the substitution
\[
u(r,\theta)=u(t,\theta),\qquad v(r,\theta)=r^{-k}\mathbf{V}(t,\theta),\qquad
t=\ln r,\theta\in S_{N-1},
\]
we get
\begin{equation}
\left\{
\begin{array}
[c]{c}%
u_{tt}+(N-2)u_{t}+\Delta_{S}u\mathbf{\;+}e^{-2kt}u^{s}\mathbf{V}^{m+1}=0,\\
\mathbf{V}_{tt}+(N-2-2k)\mathbf{V}_{t}+\Delta_{S}\mathbf{V}%
-k(N-2-k)\mathbf{V+}u^{s+1}\mathbf{V}^{m}=0.
\end{array}
\right.  \label{chos}%
\end{equation}
Then $u\mathbf{,V}$ are bounded near $\infty,$ and from \cite[Proposition
4.1]{BR} $u$ converges exponentially to the constant $\alpha,$ more precisely
\begin{equation}
\left\Vert \left\vert u\mathbf{-}\alpha\right\vert +\left\vert u_{t}%
\right\vert +\left\vert \nabla_{S}u\right\vert \right\Vert _{C^{0}(S^{N-1}%
)}=O(e^{-(N-2)t}), \label{mum}%
\end{equation}
because $k\neq(N-2)/2$ and all the derivatives of $\mathbf{V}$ up to the order
$2$ are bounded. The equation in $\mathbf{V}$ takes the form
\[
\mathbf{V}_{tt}+(N-2-2k)\mathbf{V}_{t}+\Delta_{S}\mathbf{V}%
-k(N-2-k)\mathbf{V+}\alpha^{s+1}\mathbf{V}^{m}+\varphi=0
\]
where $\varphi$ and its derivatives up to the order $2$ are $O(e^{-(N-2)t}).$
From \cite[Theorem 4.1]{BR}, the function $\mathbf{V}$ converges to $\beta$ or
to $0$ in $C^{2}(S^{N-1}).\medskip$

Next we define
\[
f(t)=e^{(N-2)t}%
{\displaystyle\int\limits_{S^{N-1}}}
u_{t}d\theta=r^{N-1}%
{\displaystyle\int\limits_{S^{N-1}}}
u_{r}d\theta.
\]
Then
\[
\mathcal{E}_{L,1}(t)=(N-2)\frac{s+1}{2}\alpha f(t)+O((e^{-(N-2)t}))
\]
from (\ref{mum}). Moreover from (\ref{chos}),
\[
f_{t}(t)=-e^{(N-2-2k)t}%
{\displaystyle\int\limits_{S^{N-1}}}
u^{s}\mathbf{V}^{m+1}d\theta<0.
\]
Since $u$ is regular at 0, $f(t)=0(e^{(N-1)t})$ at $-\infty,$ in particular
$\lim_{t\rightarrow-\infty}f(t)=0.$ And $f_{t}(t)=O(e^{(N-2-2k)t})=O(e^{-t})$
at $\infty,$ then $f(t)$ has a finite negative limit $-\ell^{2};$ and
\[
\lim_{t\rightarrow\infty}\mathcal{E}_{L,1}(t)=-(N-2)\frac{s+1}{2}\alpha
\ell^{2}.
\]
Moreover $v=e^{-kt}\mathbf{V},$ and $\mathbf{V}$ and its derivatives up to the
order $2$ are bounded, thus
\[
\mathcal{E}_{L,2}(t)=O(e^{(N-2-2k)t})=O(e^{-t})
\]
Finally
\[
\mathcal{E}_{L,3}(t)=O(e^{(N+a-k(m+1))t})
\]
and $N+a-k(m+1)<\frac{2-N}{m-1}<0.$ Then $\mathcal{E}_{L}$ has a finite limit
$\theta<0$ at $\infty,$ which is contradictory.
\end{proof}

\section{Analysis of the fixed points \label{app}}

Here we make the local analysis around the fixed points.\medskip

\begin{proof}
[Proof of Proposition \ref{ereg}](i) Consider a regular solution $(u,v)$ with
initial data $(u_{0},v_{0}).$ When when $r$ $\rightarrow$ $0,$ we have
\[
(-r^{N-1}\left\vert u^{\prime}\right\vert ^{p-2}u^{\prime})^{\prime}%
=r^{N-1+a}u_{0}^{s}v_{0}^{\delta}(1+o(1)),\qquad-\left\vert u^{\prime
}\right\vert ^{p-2}u^{\prime}=\frac{1}{N+a}r^{1+a}u_{0}^{s}v_{0}^{\delta
}(1+o(1)),
\]
thus from (\ref{xyt}), when $t\rightarrow-\infty$
\begin{align*}
X(t)  &  =(\frac{1}{N+a}u_{0}^{s+1-p}v_{0}^{\delta})^{1/(p-1)}e^{(p+a)t/(p-1)}%
(1+o(1)),\\
Y(t)  &  =(\frac{1}{N+b}u_{0}^{\mu}v_{0}^{m+1-q})^{1/(q-1)}e^{(q+b)t/(q-1)}%
(1+o(1)),
\end{align*}
and $\lim_{t\rightarrow-\infty}Z=N+a,$ $\lim_{t\rightarrow-\infty}W=(N+b).$ In
particular the trajectory tends to $N_{0}=(0,0,N+a,N+b).\medskip$

\noindent(ii) Reciprocally, consider a trajectory converging to $N_{0}.$
Setting $Z=N+a+\tilde{Z},W=N+b+\tilde{W},$ the linearized system is
\begin{equation}
X_{t}=\frac{p+a}{p-1}X,\quad Y_{t}=\frac{q+b}{q-1}Y,\quad\tilde{Z}%
_{t}=(N+a)\left[  -sX-\delta Y-\tilde{Z}\right]  ,\quad\tilde{W}%
_{t}=(N+b)\left[  -\mu X-mY-\tilde{W}\right]  . \label{lib}%
\end{equation}
The eigenvalues are
\begin{equation}
\lambda_{1}=\frac{p+a}{p-1}>0,\quad\lambda_{2}=\frac{q+b}{q-1}>0,\quad
\lambda_{3}=-(N+a)<0,\quad\lambda_{4}=-(N+b)<0. \label{vib}%
\end{equation}
The unstable variety $\mathcal{V}_{u}$ and the stable variety $\mathcal{V}%
_{s}$ have dimension $2.$ Notice that $\mathcal{V}_{s}$ is contained in the
set $X=Y=0,$ thus no admissible trajectory converges to $N_{0}$ when
$r\rightarrow\infty$, and there exists an infinity of admissible trajectories
in $\mathcal{R},$ converging to $N_{0}$ when $r\rightarrow0$. Moreover we get
$\lim_{t\rightarrow-\infty}e^{-(p+a)/(p-1)t}X(t)=\kappa>0$ and $\lim
_{t\rightarrow-\infty}e^{-(q+b)/(q-1)t}Y(t)=\ell>0$. Thus $(u,v)$ have a
positive limit $(u_{0},v_{0})=\mathbf{(}(N+a)\kappa^{p-1})^{(q-1-m)/D}%
\mathbf{(}(N+b)\ell^{q-1})^{\delta/D}$ from (\ref{for}), (\ref{eto}), hence
$(u,v)$ is a regular solution.$\medskip$

Next we show that \textit{for any }$\kappa>0,\ell>0$ there exists a unique
local solution such that $\lim_{t\rightarrow-\infty}e^{-(p+a)t/(p-1)}%
X(t)=\kappa$ and $\lim_{t\rightarrow-\infty}e^{-(q+b)/(q-1)t}Y=\ell.$ On
$\mathcal{V}_{u}$, we get a system of two equations of the form
\[
X_{t}=X(\lambda_{1}+F(X,Y)),\quad Y_{t}=Y(\lambda_{2}+G(X,Y)),
\]
where $F=AX+BY+f(X,Y),$ where $f$ is a smooth function with $f_{X}%
(0,0)=f_{Y}(0,0)=0,$ similarly for $G.$ Setting $X=e^{\lambda_{1}t}%
(\kappa+x),$ $Y=e^{\lambda_{2}t}(\ell+y),$ and assuming $\lambda_{2}%
\geqq\lambda_{1}$ and setting $\rho=e^{\lambda_{1}t}$ we obtain
\[
x_{\rho}=\frac{1}{\rho}(\kappa+x)F(\rho(\kappa+x),\rho^{\lambda_{2}%
/\lambda_{1}}(\ell+y)),\qquad y_{\rho}=(\ell+y)G(\rho(\kappa+x),\rho
^{\lambda_{2}/\lambda_{1}}(\ell+y)),
\]
with $x(0)=y(0)=0.$ Then we get local existence and uniqueness. Hence for any
$u_{0},v_{0}>0$ there exists a regular solution $(u,v)$ with initial data
($u_{0},v_{0}).$ Moreover $u,v\in$ $C^{1}(\left[  0,R\right)  )$ when
$a,b>-1.\medskip$
\end{proof}

\begin{proof}
[Proof of Proposition \ref{cao}]The linearization at $A_{0}=\left(  \frac
{N-p}{p-1},\frac{N-q}{q-1},0,0\right)  $ gives, with $X=\frac{N-p}{p-1}%
+\tilde{X},Y=\frac{N-q}{q-1}+\tilde{Y},$%
\[
\tilde{X}_{t}=\frac{N-p}{p-1}\left[  \tilde{X}+\frac{Z}{p-1}\right]
,\quad\tilde{Y}_{t}=\frac{N-q}{q-1}\left[  \tilde{Y}+\frac{W}{q-1}\right]
,\quad Z_{t}=\lambda_{3}Z,\quad W_{t}=\lambda_{4}W.
\]
The eigenvalues are
\begin{equation}
\lambda_{1}=\frac{N-p}{p-1}>0,\;\lambda_{2}=\frac{N-q}{q-1}>0,\;\lambda
_{3}=N+a-s\frac{N-p}{p-1}-\delta\frac{N-q}{q-1},\;\lambda_{4}=N+b-\mu
\frac{N-p}{p-1}-m\frac{N-q}{q-1}. \label{fgh}%
\end{equation}
$\bullet$ Convergence when $r\rightarrow$ $\infty:$ If $\lambda_{3}>0,$ or
$\lambda_{4}>0,$ then the stable variety$\mathcal{V}_{s}$ has at most
dimension $1,$ it satisfies $W=0$ or $Z=0,$ hence there is no admissible
trajectory converging to $A_{0}$ at $\infty.$ If $\lambda_{3}<0,$ and
$\lambda_{4}<0,$ then $\mathcal{V}_{s}$ has dimension $2.$ Moreover
$\mathcal{V}_{s}\cap\left\{  Z=0\right\}  $ has dimension $1:$ the
corresponding system in $X,Y,W$ has the eigenvalues $\lambda_{1},\lambda
_{2},\lambda_{4};$ similarly $\mathcal{V}_{s}\cap\left\{  W=0\right\}  $ has
dimension 1. Then there exist trajectories in $\mathcal{V}_{s}$ such that
$Z>0$ and $W>0,$ included in $\mathcal{R}$ and thus admissible. They satisfy
$\lim e^{-\lambda_{3}t}Z=C_{3}>0,\lim e^{-\lambda_{4}t}W=C_{4}>0,$ then
(\ref{oup}) follows from (\ref{for}).\medskip

\noindent$\bullet$ Convergence when $r\rightarrow0:$ If $\lambda_{3}<0,$ or
$\lambda_{4}<0,$ the unstable variety $\mathcal{V}_{u}$ has at most dimension
$3$, and it satisfies $W=0$ or $Z=0.$ Therefore there is no admissible
trajectory converging at $-\infty.$ If $\lambda_{3},\lambda_{4}>0,$ then
$\mathcal{V}_{u}$ has dimension $4;$ in that case there exist admissible
trajectories, and (\ref{oup}) follows as above.\medskip
\end{proof}

\begin{proof}
[Proof of Proposition \ref{cpo}]We set $P_{0}=\left(  \frac{N-p}{p-1},Y_{\ast
},0,W_{\ast}\right)  ,$ with
\[
Y_{\ast}=\frac{\frac{N-p}{p-1}\mu-(q+b)}{q-1-m},\qquad W_{\ast}=\frac
{(q-1)(N+b-\frac{N-p}{p-1}\mu)-m(N-q)}{q-1-m},
\]
for $m+1\neq q.$ The linearization at $P_{0}$ gives, with $X=\frac{N-p}%
{p-1}+\tilde{X},Y=Y_{\ast}+\tilde{Y},$ $W=W_{\ast}+\tilde{W},$
\[
\tilde{X}_{t}=\frac{N-p}{p-1}\left[  \tilde{X}+\frac{Z}{p-1}\right]
,\quad\tilde{Y}_{t}=Y_{\ast}\left[  \tilde{Y}+\frac{\tilde{W}}{q-1}\right]
,\quad Z_{t}=\lambda_{3}Z,\quad\tilde{W}_{t}=W_{\ast}\left[  -\mu\tilde
{X}-m\tilde{Y}-\tilde{W}\right]
\]
The eigenvalues are
\[
\lambda_{1}=\frac{N-p}{p-1}>0,\quad\lambda_{3}=N+a-s\frac{N-p}{p-1}-\delta
Y_{\ast}=\frac{D}{q-1-m}(\gamma-\frac{N-p}{p-1}),
\]
and the roots $\lambda_{2},\lambda_{4}$ of equation
\[
\lambda^{2}-(Y_{\ast}-W_{\ast})\lambda+\frac{m+1-q}{q-1}Y_{\ast}W_{\ast}=0
\]
Then if $\lambda_{3}<0$ (resp. $\lambda_{3}>0)$ there is no admissible
trajectory converging when $r\rightarrow0$ (resp. $r\rightarrow\infty).$
Indeed $\mathcal{V}_{u}=\mathcal{V}_{u}\cap\left\{  Z=0\right\}  $ (resp.
$\mathcal{V}_{s}=\mathcal{V}_{s}\cap\left\{  Z=0\right\}  )$.\medskip

1) Suppose that $q>m+1.$ Since $q+b<\frac{N-p}{p-1}\mu<N+b-m\frac{N-q}{q-1},$
we have $P_{0}\in\mathcal{R},$ and $\lambda_{2}\lambda_{4}<0$. First assume
$\lambda_{3}<0,$ that means $\gamma<\frac{N-p}{p-1}$. Then $\mathcal{V}_{s}$
has dimension 2, and $\mathcal{V}_{s}\cap\left\{  Z=0\right\}  $ has dimension
1, there exists trajectories with $Z>0,$ which are admissible, converging when
$r\rightarrow\infty.$ Next assume $\lambda_{3}>0$. Then $\mathcal{V}_{u}$ has
dimension 3, and $\mathcal{V}_{u}\cap\left\{  Z=0\right\}  $ has dimension 2.
Thus there exist admissible trajectories converging when $t\rightarrow
-\infty.$\medskip

2) Suppose that $q<m+1$. Since $q+b>\frac{N-p}{p-1}\mu>N+b-m\frac{N-q}{q-1},$
we have $P_{0}\in\mathcal{R},$ and $\lambda_{2}\lambda_{4}>0$. We assume
$q\frac{N-p}{p-1}\mu+m(N-q)\neq N(q-1)+(b+1)q,$ that means $Y_{\ast}\neq
W_{\ast}.$ First suppose $\lambda_{3}>0,$ that means $\gamma<\frac{N-p}{p-1}.$
If Re$\lambda_{2}>0,$ then $\mathcal{V}_{u}$ has dimension 4, or
Re$\lambda_{2}<0$ then $\mathcal{V}_{u}$ has dimension $2$ and $\mathcal{V}%
_{u}\cap\left\{  Z=0\right\}  $ has dimension $1.$ In any case, there exist
admissible trajectories converging when $r$ $\rightarrow$ $0$. Next assume
$\lambda_{3}<0$. If Re$\lambda_{2}>0,$ then $\mathcal{V}_{s}$ has dimension 1,
and $\mathcal{V}_{s}\cap\left\{  Z=0\right\}  =\emptyset.$ If Re$\lambda
_{2}<0,$ then $\mathcal{V}_{s}$ has dimension $3$. In any case $\mathcal{V}%
_{s}$ contains trajectories with $Z>0,$ which are admissible, converging when
$r\rightarrow\infty.$

Those trajectories satisfy $\lim e^{-\lambda_{3}t}Z=C_{3}>0,$ $\lim
X=\frac{N-p}{p-1},$ $\lim Y=Y_{\ast}$ and $\lim W=W_{\ast},$ thus (\ref{aup})
follows from (\ref{for}) and (\ref{gk}).\medskip
\end{proof}

\begin{proof}
[Proof of Proposition \ref{mis}]The linearization at $I_{0}=(\frac{N-p}%
{p-1},0,0,0)$ gives, with $X=\frac{N-p}{p-1}+\tilde{X},$
\[
\tilde{X}_{t}=\frac{N-p}{p-1}(\tilde{X}+\frac{Z}{p-1}),\quad Y_{t}=-\frac
{N-q}{q-1}Y,\quad Z_{t}=(N+a-s\frac{N-p}{p-1})Z,\quad W_{t}=(N+b-\mu\frac
{N-p}{p-1})W.
\]
The eigenvalues are
\[
\lambda_{1}=\frac{N-p}{p-1}>0,\quad\lambda_{2}=-\frac{N-q}{q-1}<0,\quad
\lambda_{3}=N+a-s\frac{N-p}{p-1},\quad\lambda_{4}=N+b-\mu\frac{N-p}{p-1}.
\]
$\bullet$ Convergence when $r\rightarrow$ $\infty:$ If $\lambda_{3}>0$ or
$\lambda_{4}>0,$ then $\mathcal{V}_{s}=\mathcal{V}_{s}\cap\left\{
Z=0\right\}  $ or $\mathcal{V}_{s}=\mathcal{V}_{s}\cap\left\{  W=0\right\}  $.
There is no admissible trajectory converging at $\infty$. Next suppose that
$\lambda_{3},\lambda_{4}<0.$ Then $\mathcal{V}_{s}$ has dimension $3;$ it
contains trajectories with $Y,Z,W>0,$ which are admissible. They satisfy $\lim
X=\frac{N-p}{p-1},$ $\lim e^{-\lambda_{2}t}Y=C_{2}>0,$ $\lim e^{-\lambda_{3}%
t}Z=C_{3}>0,$ $\lim e^{-\lambda_{4}t}W=C_{4}>0,$ then (\ref{lup}) follows from
(\ref{for}) and (\ref{ga}).\medskip

\noindent$\bullet$ Convergence when $r\rightarrow$ $0:$ Since $\lambda_{2}<0$
we have $\mathcal{V}_{u}=\mathcal{V}_{u}\cap\left\{  Y=0\right\}  ,$ hence
there is no admissible trajectory converging when $r\rightarrow0.$ \medskip
\end{proof}

\begin{proof}
[Proof of Proposition \ref{gzero}]The point $G_{0}=(\frac{N-p}{p-1}%
,0,0,N+b-\frac{N-p}{p-1}\mu)\in\mathcal{R}$ since $\frac{N-p}{p-1}\mu<N+b.$
The linearization at $G_{0}$ gives, with $X=\frac{N-p}{p-1}+\tilde
{X},W=N+b-\frac{N-p}{p-1}\mu+\tilde{W},$
\begin{align*}
\tilde{X}_{t}  &  =\frac{N-p}{p-1}\left[  \tilde{X}+\frac{Z}{p-1}\right]
,\quad Y_{t}=\frac{Y}{q-1}(q+b-\frac{N-p}{p-1}\mu),\\
Z_{t}  &  =(N+a-s\frac{N-p}{p-1})Z,\quad W_{t}=(N+b-\frac{N-p}{p-1}\mu)\left[
-\mu\tilde{X}-mY-\tilde{W}\right]
\end{align*}
The eigenvalues are
\[
\lambda_{1}=\frac{N-p}{p-1}>0,\;\lambda_{2}=\frac{1}{q-1}(q+b-\frac{N-p}%
{p-1}\mu),\;\lambda_{3}=N+a-s\frac{N-p}{p-1},\;\lambda_{4}=\frac{N-p}{p-1}%
\mu-N-b<0.
\]

\noindent$\bullet$ Convergence when $r\rightarrow$ $\infty:$ If $\lambda
_{2}>0,$ or $\lambda_{3}>0,$ then $\mathcal{V}_{s}=$ $\mathcal{V}_{s}%
\cap\left\{  Y=0\right\}  $ or $\mathcal{V}_{s}=\mathcal{V}_{s}\cap\left\{
Z=0\right\}  $, there is no admissible trajectory converging at $\infty$.
Assume $\lambda_{2},\lambda_{3}<0,$ then $\mathcal{V}_{s}$ has dimension 3, it
contains trajectories with $Y,Z>0,$ which are admissible.\medskip

\noindent$\bullet$ Convergence when $r\rightarrow$ $0:$ If $\lambda_{3}<0,$ or
$\lambda_{2}<0$ there is no admissible trajectory. If $\lambda_{2},\lambda
_{3}>0$ then $\mathcal{V}_{s}$ has dimension $3,$ it contains admissible
trajectories.\medskip

In any case $\lim X=\frac{N-p}{p-1},$ $\lim e^{-\lambda_{2}t}Y=C_{2}>0,$ $\lim
e^{-\lambda_{3}t}Z=C_{3}>0,$ $\lim W=N+b-\frac{N-p}{p-1}\mu,$ hence
(\ref{lup}) still follows from (\ref{for}) and (\ref{ga}).\medskip
\end{proof}

\begin{proof}
[Proof of Proposition \ref{mes}]We set $C_{0}=\left(  0,\bar{Y},0,\bar
{W}\right)  ,$ with
\begin{equation}
\bar{Y}=\frac{q+b}{m+1-q},\qquad\bar{W}=\frac{m(N-q)-(N+b)(q-1)}{m+1-q}.
\label{bcd}%
\end{equation}
Then $C_{0}\in\mathcal{R}$ if $\frac{N-q}{q-1}m>N+b,$ implying $q<m+1.$ The
linearization at $C_{0}$ gives, with $Y=\bar{Y}+\tilde{Y}$ and $W=\bar
{W}+\tilde{W}$
\begin{equation}
X_{t}=-\frac{N-p}{p-1}X,\quad\tilde{Y}_{t}=\bar{Y}\left[  \tilde{Y}%
+\frac{\tilde{W}}{q-1}\right]  ,\quad Z_{t}=\lambda_{3}Z,\quad W_{t}=\bar
{W}\left[  -\mu X-m\tilde{Y}-\tilde{W}\right]  .\nonumber
\end{equation}
The eigenvalues are
\[
\lambda_{1}=-\frac{N-p}{p-1},\quad\lambda_{3}=N+a-\delta\bar{Y},
\]
and the roots $\lambda_{2},\lambda_{4}$ of equation%
\begin{equation}
\lambda^{2}-(\bar{Y}-\bar{W})\lambda+\frac{m+1-q}{q-1}\bar{Y}\bar{W}=0
\label{lad}%
\end{equation}
then $\lambda_{2}\lambda_{4}>0.$ We assume $m\neq\frac{N(q-1)+(b+1)q}{N-q},$
that means $\bar{Y}\neq\bar{W}.$\medskip

\noindent$\bullet$ Convergence when $r\rightarrow$ $\infty:$ if $\lambda
_{3}>0$ we have $\mathcal{V}_{s}=\mathcal{V}_{s}\cap\left\{  Z=0\right\}  ,$
hence there is no admissible trajectory$.$ Next assume that $\lambda_{3}<0$,
that means $\delta>(N+a)\frac{m+1-q}{q+b}.$If $\operatorname{Re}\lambda_{2}<0$
(resp. $>0)$ then $\mathcal{V}_{s}$ has dimension $4$ (resp. $2$) and
$\mathcal{V}_{s}\cap\left\{  X=0\right\}  $ and $\mathcal{V}_{s}\cap\left\{
Z=0\right\}  $ have dimension $3$ (resp. $1$) then there exist trajectories
with $X,Z>0,$ which are admissible.\medskip

In any case $\lim e^{-\lambda_{1}t}X=C_{1}>0,$ $\lim Y=\bar{Y},$ $\lim
e^{-\lambda_{3}t}Z=C_{3}>0,$ $\lim W=\bar{W},$ then (\ref{rup})
follows.\medskip

\noindent$\bullet$ Convergence when $r\rightarrow$ $0:$ Since $\lambda_{1}<0$
we have $\mathcal{V}_{u}=\mathcal{V}_{u}\cap\left\{  X=0\right\}  ,$ hence
there is no admissible trajectory.\medskip
\end{proof}

\begin{proof}
[Proof of Proposition \ref{mas}]We set $R_{0}=\left(  0,\bar{Y},\bar{Z}%
,\bar{W}\right)  ,$ where $\bar{Y},\bar{W}$ are defined at (\ref{bcd}), and
$\bar{Z}=N+a-\delta\frac{b+q}{m+1-q}.$Under our assumptions it lies in
$\mathcal{R}$. Setting $Y=\bar{Y}+\tilde{Y},Z=\bar{Z}+\tilde{Z},W=\bar
{W}+\tilde{W},$ the linearization at $R_{0}$ gives%
\begin{equation}
X_{t}=\lambda_{1}X,\quad\tilde{Y}_{t}=\bar{Y}\left[  \tilde{Y}+\frac{\tilde
{W}}{q-1}\right]  ,\quad Z_{t}=\bar{Z}\left[  -sX-\delta\tilde{Y}-\tilde
{Z}\right]  ,\quad W_{t}=\bar{W}\left[  -\mu X-m\tilde{Y}-\tilde{W}\right]
;\nonumber
\end{equation}
the eigenvalues are
\[
\lambda_{1}=\frac{1}{p-1}(p+a-\delta\frac{b+q}{m+1-q}),\quad\lambda_{3}%
=-\bar{Z}<0;
\]
and the roots $\lambda_{2},\lambda_{4}$ of equation of equation (\ref{lad}%
).\medskip

\noindent$\bullet$ Convergence when $r\rightarrow$ $\infty:$ If $\lambda
_{1}>0$, that means $(p+a)\frac{m+1-q}{q+b}<\delta,$ then $\mathcal{V}%
_{s}=\mathcal{V}_{s}\cap\left\{  X=0\right\}  ,$ hence there is no admissible
trajectory. Next assume $\lambda_{1}<0;$ if $\operatorname{Re}\lambda_{2}<0$
(resp. $>0)$ then $\mathcal{V}_{s}$ has dimension 4(resp. $2$) and
$\mathcal{V}_{s}\cap\left\{  X=0\right\}  $ has dimension $3$ (resp. $1$) then
there exist admissible trajectories.\medskip

\noindent$\bullet$ Convergence when $r\rightarrow$ $0:$ If $\lambda_{1}<0$,
then $\mathcal{V}_{u}=\mathcal{V}_{u}\cap\left\{  X=0\right\}  ,$ hence there
is no admissible trajectory. Next assume $\lambda_{1}>0.$ If
$\operatorname{Re}\lambda_{2}=\operatorname{Re}\lambda_{4}<0$ (resp. $>0)$
then $\mathcal{V}_{s}$ has dimension 4 (resp. $2$) and $\mathcal{V}_{s}%
\cap\left\{  X=0\right\}  $ has dimension $3$ (resp. $1$) then there exist
admissible trajectories.\medskip

In any case $\lim e^{-\lambda_{1}t}X=C_{1}>0,$ $\lim Y=\bar{Y},$ $\lim
Z=\bar{Z},$ $\lim W=\bar{W},$ then (\ref{rup}) holds again.\medskip
\end{proof}

\begin{remark}
\label{none}Finally there is no admissible trajectory converging to
$0=(0,0,0,0),$ or $K_{0}=(0,0,N+a,0),$ or $L_{0}=(0,0,0,N+b)$. Indeed the
linearization at $0$ gives
\[
X_{t}=-\frac{N-p}{p-1}X,\quad Y_{t}=-\frac{N-q}{q-1}Y,\quad Z_{t}=(N+a)Z,\quad
W_{t}=(N+b)W
\]
Then $\mathcal{V}_{s}$ and $\mathcal{V}_{u}$ have dimension $2,$ hence
$\mathcal{V}_{s}$ is contained in $\left\{  Z=W=0\right\}  ,$ and
$\mathcal{V}_{u}$ in $\left\{  X=Y=0\right\}  .$ The linearization at $K_{0}$
gives, with $Z=N+a+\tilde{Z},$
\[
X_{t}=\frac{p+a}{p-1}X,\quad Y_{t}=-\frac{N-q}{q-1}Y,\quad Z_{t}=(N+a)\left[
-sX-\delta Y-\tilde{Z}\right]  ,\quad W_{t}=(N+b)W.
\]
The eigenvalues are $\frac{p+a}{p-1},-\frac{N-q}{q-1},-(N+a),$ $N+b.$ Then
$\mathcal{V}_{s}$ and $\mathcal{V}_{u}$ have dimension $2,$ hence
$\mathcal{V}_{s}$ is contained in $\left\{  Z=W=0\right\}  $, and
$\mathcal{V}_{u}$ in $\left\{  Y=0\right\}  .$ The case of $L_{0}$ follows by symmetry.
\end{remark}

\end{document}